\documentclass[11pt,a4paper,reqno]{amsart}            

\usepackage[english]{babel}

\usepackage{amssymb,amsmath,amsthm}
\usepackage{soul}
\usepackage{comment}
\usepackage{mathrsfs}
\usepackage{enumitem}
\usepackage[immediate,debrief]{silence}
\WarningFilter{hyperref}{Token not allowed in a PDF string}
\usepackage[pagebackref]{hyperref}
\makeatletter
\pdfstringdefDisableCommands{\let\@latex@error\@gobbletwo}
\makeatother

\usepackage{color}

\usepackage{scrtime}

\hypersetup{
 colorlinks,
 linkcolor={blue!90!black},
 citecolor={red!80!black},
 urlcolor={blue!50!black}
}
\usepackage{tikz}
\usepackage{tikz-cd}
\usepackage{graphicx}
\usetikzlibrary{shapes.geometric,arrows,fit,matrix,positioning,trees,snakes}
\tikzset
{
    treenode/.style = {circle, draw=black, align=center, minimum size=1cm},
    subtree/.style  = {isosceles triangle, draw=black, align=center, minimum height=0.5cm, minimum width=1cm, shape border rotate=90, anchor=north}
}

\renewcommand\mathcal{\mathscr}
\baselineskip=12pt
\hfuzz 2pt
\vfuzz 2pt
\parskip=4pt

\theoremstyle{plain}
\newtheorem{theorem}{Theorem}[section]
\newtheorem*{theorem*}{Theorem}
\newtheorem{lemma}[theorem]{Lemma}
\newtheorem*{lemma*}{Lemma}
\newtheorem{corollary}[theorem]{Corollary}
\newtheorem{proposition}[theorem]{Proposition}

\theoremstyle{remark}
\newtheorem{remark}[theorem]{Remark}
\newtheorem*{remark*}{Remark}

\theoremstyle{definition}
\newtheorem{definition}[theorem]{Definition}
\newtheorem*{definition*}{Definition}

\theoremstyle{example}

\newtheorem*{example*}{Example}

\theoremstyle{caveat}
\newtheorem{caveat}[theorem]{Caveat}
\newtheorem*{caveat*}{Caveat}

\theoremstyle{notation}
\newtheorem{notation}[theorem]{Notation}
\newtheorem*{notation*}{Notation}

\numberwithin{equation}{section}

\newcount\quantno
\everydisplay{\quantno=0}\everycr{\quantno=0}
\newcommand\quant{\advance\quantno by1
                      \ifnum\quantno=1\qquad\else\quad\fi\forall }
\newcommand\itemno[1]{(\romannumeral #1)}

\newcommand\rest[1]{\kern-.1em
          \lower.5ex\hbox{$\scriptstyle #1$}\kern.05em}

\renewcommand\mod[1]{\vert{#1}\vert}
\newcommand\bigmod[1]{\bigl\vert{#1}\bigr|}

\newcommand\bignorm[2]{\left.{\bigl\Vert{#1}\bigr\Vert_{#2}}\right.}
\newcommand\bignormto[3]{\left.{\bigl\Vert{#1}\bigr\Vert_{#2}^{#3}}\right.}

\newcommand\prodo[2]{\left\langle#1,#2\right\rangle}

\newcommand\wrt{\,\text{\rm d}}

\newcommand\bB{\mathbf{B}}

   \newcommand\mrT{\mathrm{T}}

\newcommand\BF{\mathbb{F}}

\newcommand\BN{\mathbb{N}}
\newcommand\BR{\mathbb{R}}

\newcommand\cB{\mathcal{B}}   
 
\newcommand\cC{\mathcal{C}}   
 
 \newcommand\fD{\mathfrak{D}}

\newcommand\cF{\mathcal{F}}

\newcommand\cM{\mathcal{M}}

  \newcommand\fS{\mathfrak{S}}

\newcommand\cX{\mathcal{X}}

\newcommand\al{\alpha}
\newcommand\be{\beta}
\newcommand\ga{\gamma}    \newcommand\Ga{\Gamma}
\newcommand\de{\delta}

\newcommand\la{\lambda}   
\newcommand\om{\omega}    \newcommand\Om{\Omega}  
\newcommand\si{\sigma}    \newcommand\Si{\Sigma}
\newcommand\te{\theta}
\newcommand\vp{\varphi}

\newcommand\OV{\overline}
\newcommand\funnyk{k\hbox to 0pt{\hss\phantom{g}}}

\newcommand\lu[1]{L^1(#1)}
\newcommand\lp[1]{L^p(#1)}

\newcommand\ly[1]{L^\infty(#1)}

\newcommand\wh{\widehat}
\newcommand\wt{\widetilde}
\newcommand\whH{\widehat{\phantom{G}}\hbox to 0pt{\hss $H$}}

\newcommand\emspace{\hbox to 6pt{\hss}}
\newcommand\ds{\displaystyle}

\newcommand\rmi{\hbox{\rm (i)}}
\newcommand\rmii{\hbox{\rm (ii)}}
\newcommand\rmiii{\hbox{\rm (iii)}}
\newcommand\rmiv{\hbox{\rm (iv)}}

\newcommand\One{{\mathbf{1}}}

\newcommand\e{\mathrm{e}}

\newcommand\floor[1]{\lfloor{#1}\rfloor}

\DeclareSymbolFont{EUEX}{U}{euex}{m}{n}

\DeclareSymbolFont{euexlargesymbols}{U}{euex}{m}{n}
\DeclareMathSymbol{\intop}{\mathop}{euexlargesymbols}{"52}
     \def\int{\intop\nolimits}

\DeclareSymbolFont{euexsymbols}     {U}{euex}{m}{n}
\DeclareMathSymbol{\smallint}{\mathop}{euexsymbols}{"52}

\begin{document}

\title[Maximal operators]{Spider's webs and sharp $L^p$ bounds \\ for the Hardy--Littlewood maximal operator \\ on Gromov hyperbolic spaces}


\keywords{Gromov hyperbolic space, centred Hardy--Littlewood maximal functions, rough isometries, spider's web}

\thanks{
Chalmoukis and Santagati are members of the Gruppo Nazionale per l'Analisi Ma\-te\-matica, la Probabilit\`a e le loro Applicazioni (GNAMPA) of the
Istituto Nazionale di Alta Matematica (INdAM).
 Santagati was supported by the Australian Research Council, grant DP220100285.}

\author[N. Chalmoukis]{Nikolaos Chalmoukis}
\address[Nikolaos Chalmoukis]{Dipartimento di Matematica e Applicazioni \\ Universit\`a di Milano-Bicocca\\
via R.~Cozzi 53\\ I-20125 Milano\\ Italy}
\email{nikolaos.chalmoukis@unimib.it}

\author[S. Meda]{Stefano Meda}
\address[Stefano Meda]{Dipartimento di Matematica e Applicazioni \\ Universit\`a di Milano-Bicocca\\
via R.~Cozzi 53\\ I-20125 Milano\\ Italy}
\email{stefano.meda@unimib.it}

\author[F.\ Santagati]{Federico Santagati} \address[Federico Santagati]{ School of Mathematics and Statistics, University of New South Wales, Sydney,
Australia}
\email{f.santagati@unsw.edu.au}

\begin{abstract}
	In this paper we prove that if  $1<a\leq b<a^2$ and $X$ is a locally doubling $\de$-hyperbolic complete connected length metric measure space with 
	$(a,b)$-pinched exponential growth at infinity, then the centred Hardy--Littlewood maximal operator $\cM$ 
	is bounded on $\lp{X}$ for all $p>\tau$, and it is of weak type $(\tau,\tau)$, where $\tau := \log_ab$.  
	A key step in the proof is a new structural theorem for Gromov hyperbolic spaces with $(a,b)$-pinched exponential growth at infinity,
	consisting in a discretisation of $X$ by means of certain graphs, introduced in this paper and called spider's webs, 
	with ``good connectivity properties".  Our result applies to trees with bounded geometry, and 
	Cartan--Hadamard manifolds of pinched negative curvature, providing new boundedness results in these settings. 
    	The index $\tau$ is optimal in the sense that if $p<\tau$, then there exists $X$ satisfying the assumptions above such that $\cM$ 
	is not of weak type $(p,p)$.  Furthermore, if $b>a^2$, then there are examples of spaces~$X$ satisfying the assumptions above such that $\cM$ bounded 
	on $\lp{X}$ if and only if $p=\infty$.
\end{abstract}


\maketitle
\begin{center}
{\textsl{This paper is dedicated to the memory of Saverio Giulini}}
\end{center}
\vskip0.8cm
\section{Introduction}
\label{s: Introduction}

The purpose of this paper is to prove sharp $L^p$ bounds for the centred Hardy--Littlewood (HL) maximal operator $\cM$ on 
a comparatively large class of metric measure spaces, including all Cartan--Hadamard manifolds of pinched negative curvature and trees with
pinched exponential volume growth. 
Our main result, Theorem~\ref{t: main}, is stated below in this introduction.  A key ingredient in its proof is a discretisation procedure
of Gromov hyperbolic spaces with pinched exponential volume growth leading to certain graphs with ``good connectivity properties'', 
introduced in this paper and called \textit{spider's webs}.  One of the advantages of our strategy is that it relies on flexible techniques 
from Geometric Analysis that can be applied to quite diverse settings and hopefully to various related problems.

Suppose that $(X,d,\mu)$ is a connected metric measure space, and $\mu$ is a Borel measure.  Denote by $B_r(x)$ the open ball with centre $x$ and radius~$r$, 
i.e.  $B_r(x) := \{y\in X: d(x,y) < r\}$, and assume that the measure of each ball is positive and finite. 
For each locally integrable function $f$ on $X$, consider its \textit{centred} HL maximal function $\cM f$, defined by
$$
\cM f(x)
:= \sup_{r>0} \, \frac{1}{\mu\big(B_r(x)\big)} \, \int_{B_r(x)} \!\mod{f} \wrt \mu. 
$$
Clearly $\cM$ and the local version $\cM_0$ thereof, defined by
\begin{equation} \label{f: local max}
	\cM_0 f(x)
	:= \sup_{0<r\leq 1} \, \frac{1}{\mu\big(B_r(x)\big)} \, \int_{B_r(x)} \! \mod{f} \wrt \mu,
\end{equation}
are bounded on $\ly{X}$.  It is known, and not hard to see (see Proposition~\ref{p: weak cM0}), that if $\mu$ is locally doubling (see 
Subsection~\ref{ss: The local doubling property}), then $\cM_0$ is of weak type $(1,1)$, hence bounded on $\lp{X}$ for all $p$ in $(1,\infty]$.  
Thus, $\cM$ is bounded on $\lp{X}$ [resp. is of weak type $(p,p)$] for some $p$ in $(1,\infty)$ if and only if the ``global" maximal operator $\cM_\infty$, defined by 
\begin{equation} \label{f: global max}
	\cM_\infty f(x)
	:= \sup_{r> 1} \, \frac{1}{\mu\big(B_r(x)\big)} \, \int_{B_r(x)} \! \mod{f} \wrt \mu,
\end{equation}
is bounded on $\lp{X}$ [resp. is of weak type $(p,p)$].  

Denote by $J_{X}$ the interval of all $p$'s such that $\cM_\infty$ is bounded on $\lp{X}$.  
Standard arguments show that $J_{X} = (1,\infty]$, and $\cM_\infty$ is of weak type $(1,1)$, on all doubling metric measure spaces (see \cite[Theorem~2.2]{He}). 

The situation is less simple, but much more interesting, in the case where $\mu$ is locally doubling, but not doubling.   
The way $J_{X}$ depends on the properties of the metric measure space $(X,d,\mu)$ is rather subtle, and, we believe, not fully understood.  
We shall make some comments on this later in this introduction.  First, we describe some important contributions to the problem of determining $J_X$.   

The paper \cite{Str}, which has been a source of inspiration for our previous investigations on the subject \cite{CMS2, LMSV, MS, LS, MPSV}, is a landmark 
in this field.  J.-O.~Str\"omberg \cite[Theorem~p.~115]{Str} proved that if~$X$ is a symmetric space of the noncompact type, then $J_{X} = (1,\infty]$ 
and $\cM_\infty$ is of weak type $(1,1)$.  J.-Ph.~Anker, E.~Damek and C.~Yacoub  \cite[Corollary~3.22]{ADY} extended this result 
to Damek--Ricci spaces.
Str\"omberg also addressed the problem of determining $J_{X}$ on Riemannian surfaces $X$ with pinched negative 
Gaussian curvature, and stated the following result \cite[Remark~3, p.~126]{Str}: if $A$ and $B$ are positive numbers such that $A\leq B< 2A$, 
then there exists a perturbed hyperbolic metric on the upper half-plane with Gaussian curvature $K$ satisfying $-B^2\leq K\leq -A^2$ such that 
$J_{X} = (B/A,\infty]$: furthermore $\cM_\infty$ is  unbounded on $L^p(X)$ if $p<B/A$.  For a full proof of this result see \cite[Theorem~7.1]{MPSV},
where it is also shown that if $B>2A$, then $J_{X}$ is reduced to the point $\infty$.  

Str\"omberg's example leads naturally to conjecture that if $X$ is a Cartan--Hadamard Riemannian manifold with pinched negative
sectional curvature, i.e. $-B^2\leq K\leq -A^2$ for some positive constants~$A$ and $B$, and $A\leq B <2A$, then $J_{X}$ contains $(B/A,\infty]$.
Incidentally, this holds true (see Corollary~\ref{c: CH manifolds}), and follows from our main result.  
Note that compact perturbations of $X$ can significantly affect this kind of bounds on the sectional curvature, although 
it is reasonable to believe that the conclusion remains the same.  For this and related reasons it is desirable to rely on 
more ``robust" sets of assumptions than those involving curvature bounds.   

Other results somewhat related to the geometric framework considered by Str\"omberg can be found in \cite{L1, L2, MPSV} and in the papers cited therein.   

In particular, \cite{L1} and \cite{L2} focus on an interesting class of conic manifolds~$X$ of the form $X_0\times (0,\infty)$, where $X_0$ is a 
length metric measure space.  The distance~$d$ on~$X$ is defined much as the hyperbolic distance on the upper half-plane, with the metric $d_0$ on $X_0$ 
playing the role of the Euclidean distance on the real line (see \cite[formula (1)]{L1}) and the measure $\mu$ being the product of the measure $\mu_0$ 
on $X_0$ and of the measure $\wrt y/y^{N+1}$ on $(0,\infty)$ for some nonnegative constant~$N$.  H.-Q.~Li proves that $J_{X}$ is either $(p_0,\infty]$, 
where $p_0$ depends on the parameters describing the volume of balls on $X$, or is reduced to the point~$\infty$.  The index $p_0$ is optimal in the class of the 
conic manifolds considered.  Related results are contained in \cite{L2}.

The results in \cite{MPSV} corroborate the idea that appropriate ``perturbations'' of a given Riemannian metric preserve the $L^p$ boundedness properties of $\cM$. 
Specifically, \cite[Theorem~3.1]{MPSV} and \cite[Theorem~5.5]{MPSV} deal with conformal perturbations, and with the case of 
strictly quasi-isometric length spaces with ``controlled local geometry'', respectively.

Finally, in \cite{LMSV} the authors focus on the class $\Upsilon_{a,b}$ of all trees~$\mrT$ of \textit{$(a,b)$-bounded geometry}, i.e. trees in which every
vertex has at least $a+1$ and at most $b+1$ neighbours: the integers $a$ and $b$ are assumed to satisfy   
the inequality $2\leq a \leq b$.  They prove that if $b\leq a^2$, then $\cM$ is bounded on $\lp{\mrT}$ if $p>\tau$, where $\tau := \log_ab$ and it is of 
restricted weak type $(\tau,\tau)$.   The proof hinges on the sharp form of the Kunze--Stein phenomenon (see \cite[Theorem~1]{CMS1}).
Such trees may be considered as discrete counterparts of Riemannian manifolds with $(a,b)$-pinched exponential growth at infinity (see \eqref{f: pinched exp}
below).

It may be worth observing that the threshold index that we conjecture in the Riemannian case and that we found in this discrete setting agree.   
Indeed, on the one hand, the volume of balls in a tree $\mrT$ with $(a,b)$-bounded geometry satisfies the estimate
$$
a^r
\leq \mu_\mrT\big(B_r(x)\big) 
\leq 3\, b^r
\quant x \in \mrT \quant r \in \BN,
$$
where $\mu_\mrT$ denotes the counting measure on the vertices of $\mrT$.  
On the other hand, by comparison results \cite[Corollary~3.2~(ii)]{Sa}, if an $n$-dimensional Cartan--Hadamard Riemannian manifold $M$ satisfies $-B^2\leq K\leq -A^2$, then 
$$
c\, \e^{(n-1)Ar}
\leq \mu\big(B_r(x)\big) 
\leq C\, \e^{(n-1)Br}
\quant x \in M \quant r \geq 1,
$$
where $c$ and $C$ are positive constants, depending only on $n$, and $\mu$ denotes the Riemannian measure on $M$.
Thus, if we set $a := \e^{(n-1)A}$ and $b := \e^{(n-1)B}$, then the condition $B<2A$ transforms to $b<a^2$, the volume bounds in the discrete and in the
continuous case agree, and the threshold index $B/A$ in the continuous case can be written as $(\log b)/(\log a)$, which agrees with $\tau$.  

An interesting observation is that the results in \cite{MPSV, LMSV} are stable under \textit{strict rough isometries}
(see Definition~\ref{def: RI} below, \cite[Theorem~5.4]{LMSV} and \cite[Theorem~5.5]{MPSV}).   These are rough isometries in
the sense of M.~Kanai \cite{K} that preserve the exponential rate of the volume growth of balls when the radius tends to infinity.   
Rough isometries are also known as quasi-isometries (see, for instance, \cite[Section 7.2]{Gr}), and strict rough isometries are sometimes referred to
as $1$-quasi-isometries in the literature.  
This suggests that it may be worth looking for sets of assumptions that are ``stable" under such maps.   

Suppose that $\de$, $a$ and $b$ are nonnegative numbers satisfying the condition $1<a\leq b$.  We denote by $\cX_{a,b}^\de$ 
the class of all connected noncompact complete $\de$-hyperbolic length spaces $(X,d)$ (explicitly, $d$ is the strictly intrinsic metric associated 
to the length structure on $X$; in particular any pair of points in $X$ is connected by a shortest path; 
see \cite[Definition~8.4.1]{BBI}), endowed with a locally doubling Borel measure~$\mu$ 
with \textit{$(a,b)$-pinched exponential growth at infinity}.  By this we mean that for some~$a$ and $b$ with $1<a\leq b$ 
there exist positive constants $c$ and~$C$ such that 
\begin{equation} \label{f: pinched exp}
	c\, a^r \leq \mu\big(B_r(x)\big) \leq C \, b^r
	\quant x\in X \quant r \in [1,\infty).
\end{equation}

For notational convenience, set $\tau := \log_ab$.  Our main result is the following.

\begin{theorem} \label{t: main}
	Suppose that $1<a \leq b< a^2$ and that $\de$ is a nonnegative number.  If $X$ belongs to the class $\cX_{a,b}^\de$, then 
	the maximal operator $\cM$ is bounded on $\lp{X}$ for all $p>\tau$, and it is of weak type $(\tau,\tau)$.  
\end{theorem}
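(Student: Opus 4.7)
The plan is to reduce the theorem to a combinatorial problem on a discrete graph --- the \emph{spider's web} announced in the abstract --- and then to deduce the desired bounds from a discrete sharp Kunze--Stein inequality in the spirit of \cite{LMSV}. As a first step, the local doubling assumption together with Proposition~\ref{p: weak cM0} guarantees that $\cM_0$ is of weak type $(1,1)$ and bounded on $\lp{X}$ for every $p\in(1,\infty]$, so the whole argument may be focused on the global operator $\cM_\infty$ of~\eqref{f: global max}.

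Next I would construct a spider's web $\mathfrak{S}=(V,E)$ adapted to $X$: take $V$ to be a maximal $\rho$-separated net in $X$ for a suitably small $\rho>0$, join two vertices by an edge whenever their distance in $X$ is at most a threshold $R=R(\de,\rho)$, and equip $V$ with the weighted counting measure $\mu'$ that assigns to each $v\in V$ the $\mu$-volume of its Voronoi cell $C_v$. The $\de$-hyperbolicity of $X$ together with \eqref{f: pinched exp} should guarantee that $\mathfrak{S}$ is uniformly locally finite, satisfies vertex-count bounds of the form
\[
c'\, a^r \;\leq\; \# B^{\mathfrak{S}}_r(v) \;\leq\; C'\, b^r \qquad \forall\, v\in V,\ \forall\, r\in\BN,\ r\geq 1,
\]
and enjoys the ``good connectivity'' property suggested by the name: any two vertices are joined by a combinatorial geodesic, and all such geodesics between given endpoints pass through only $O(1)$ vertices on each intermediate combinatorial sphere, a reflection of the tree-like behaviour of geodesics in a Gromov hyperbolic space.

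The third step is to dominate $\cM_\infty$ by a discrete maximal operator on $\mathfrak{S}$. To $f\in\lp{X}$ I associate the discrete function $\tilde f(v):=\mu(C_v)^{-1}\int_{C_v}|f|\wrt\mu$; the pinched growth and hyperbolicity then imply that every ball $B_r(x)\subset X$ with $r\geq 1$ is contained in, and up to $\mu$-measure comparable to $\mu(B_r(x))$ contains, a union of $O(1)$ graph-balls of comparable radius around vertices close to $x$, so that
\[
\cM_\infty f(x) \;\lesssim\; \cM^{\mathfrak{S}}\tilde f(v_x) \qquad \text{for every } x\in X,
\]
where $v_x\in V$ is a vertex nearest to $x$ and $\cM^{\mathfrak{S}}$ denotes the centred maximal operator on $(\mathfrak{S},\mu')$. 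Since $\norm{\tilde f}{\ell^p(V,\mu')}\lesssim \norm{f}{\lp{X}}$, the theorem reduces to proving that $\cM^{\mathfrak{S}}$ is of weak type $(\tau,\tau)$ and bounded on $\ell^p(V,\mu')$ for every $p>\tau$.

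The discrete estimate is then obtained by decomposing $\cM^{\mathfrak{S}}=\sup_{r\in\BN}A_r$, with $A_r$ the average over the graph-ball of radius $r$, and controlling each $A_r$ through a sharp Kunze--Stein-type convolution inequality for spider's webs, extending the tree result \cite[Theorem~1]{CMS1}. The resulting operator-norm bounds for $A_r$ on $\ell^p(V,\mu')$ sum geometrically precisely when $p>\log_ab=\tau$, thanks to the assumption $b<a^2$; the endpoint weak-type $(\tau,\tau)$ bound follows by a Marcinkiewicz-type argument together with a restricted weak-type estimate at $p=\tau$. The principal obstacle is establishing this Kunze--Stein inequality on a spider's web: since $\mathfrak{S}$ is not a genuine tree, the spherical-function analysis of \cite{CMS1} cannot be invoked directly, and one must either exhibit a controlled tree skeleton of $\mathfrak{S}$ at large scales or adapt the combinatorial arguments so as to exploit only the bounded geometry and the tree-like connectivity. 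Scales comparable to $\de$, where $\mathfrak{S}$ is neither small nor asymptotically tree-like, are the most delicate to handle.
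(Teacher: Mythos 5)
Your high-level plan --- discretise $X$ by a graph, transfer $\cM_\infty$ to a discrete maximal operator, prove the discrete bound, and transfer back --- is the same three-step scheme the paper follows. But two of the steps contain genuine gaps, one of which you yourself flag.

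First, the discretisation. A maximal $\rho$-separated net joined by proximity edges is \emph{not} a spider's web in the sense of Definition~\ref{def: spider web}: a spider's web must carry a rooted-tree skeleton, the extra edges must join only vertices on the same tree level $\Si_k$, and those horizontal edges must satisfy the predecessor-compatibility condition (ii). None of this is automatic for a net graph. The paper's Theorem~\ref{t: sri spider} builds the tree by discretising each metric sphere $S_n(o)$ separately and choosing predecessors in $\Si_{n-1}$, then adds horizontal edges only within levels, shows that what results is at best a \emph{quasi}-spider's web (predecessor compatibility may fail near the root), and finally proves that it can be completed to a true spider's web that is still strictly roughly isometric to $X$. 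Morse's lemma and the Gromov hyperbolicity of $X$ enter repeatedly in these steps. This is the heavy lifting of the paper, and your net construction skips it entirely. Without the tree skeleton you do not get the crucial structural fact that geodesics in $\wh\Ga$ are ``standard'' (ascending, then at most $4\de+1$ horizontal, then descending; Proposition~\ref{p: geodesics}), which underpins everything downstream, in particular the volume bound of Corollary~\ref{c: geodesics}.

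Second, and more seriously, the discrete estimate. You propose to sum the averaging operators $A_r$ via a ``sharp Kunze--Stein-type convolution inequality for spider's webs'' generalising~\cite[Theorem~1]{CMS1}, and you correctly identify establishing such an inequality as the principal obstacle. The paper in fact \emph{does not} prove a Kunze--Stein inequality. Instead, Theorem~\ref{t: NT for spider web} proves the weak type $(\tau,\tau)$ bound on $\wh\Ga$ by a Naor--Tao-style argument: one first establishes the purely combinatorial estimate
\[
a^{-r}\, \bigmod{\bigl\{(x,y)\in E\times F : d_{\wh\Ga}(x,y)\le r\bigr\}} \;\le\; C\,(\sqrt b/a)^r\,\sqrt{\mod E\cdot\mod F},
\]
using the standard-geodesic decomposition of Proposition~\ref{p: geodesics}, and then feeds it into a level-set decomposition of $f$. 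This route needs only the combinatorics of spider's webs and the pinched volume growth, and it is precisely what makes the theorem provable: the spherical-function machinery behind Kunze--Stein is unavailable off trees, as you yourself note. So the central step of your proof is not merely unfinished; the tool you are reaching for does not exist in this generality, and a different argument is required.

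The remaining pieces of your proposal --- the reduction to $\cM_\infty$ via Proposition~\ref{p: weak cM0}, and the transfer from $X$ to $\wh\Ga$ via $\tilde f$ and a covering/overlap argument --- agree in substance with the paper's proof in Section~\ref{s: main}, modulo the cosmetic difference of using a weighted counting measure on Voronoi cells instead of the unweighted counting measure on $\wh\Ga$.
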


The index $\tau$ is optimal in the sense that if $\tau >1$ and $p<\tau$, then there exists a length space $X$ in the class $\cX_{a,b}^\de$ such that $\cM$ 
is not of weak type $(p,p)$  for every $p<\tau$.  For instance, one may take Str\"omberg's counterexample mentioned above as $X$.

If $b>a^2$, then the conclusion of Theorem~\ref{t: main} fails, for there
are examples of length spaces~$X$ in $\cX_{a,b}^\de$ such that $\cM$ is bounded on $\lp{X}$ if and only if $p=\infty$: see, for instance, \cite[Theorem~7.1~\rmii]{MPSV}.
Another example is given by the natural metric tree associated to the tree $\fS_{a,b}$, with $b>a^2$, considered in \cite[Proposition~3.3~\rmii]{LMSV}.  Such metric tree
(where each edge is isometric to the interval $[0,1]$) belongs to $\cX_{a,b}^0$, and $\cM_\infty$ is bounded on $\lp{\fS_{a,b}}$ if and only if $p=\infty$.

A noteworthy consequence of Theorem~\ref{t: main} is that if $M$ is a Cartan--Hadamard manifold with sectional curvatures $K_\pi$ satisfying the bound 
$$
-B^2 \leq K_\pi \leq -A^2
$$
where $0 < A \leq B < 2A$, then the centred HL maximal operator $\cM$ is bounded on $\lp{M}$ for all $p>B/A$, and it is 
of weak type $(B/A,B/A)$.

In particular, Theorem~\ref{t: main} improves Lohou\'e's results \cite{Lo} in arbitrary dimensions and, when applied to Str\"omberg's counterexample,
yields the endpoint result that $\cM$ is of weak type $(\tau,\tau)$, a fact which was previously unknown.

It is well-known that Damek--Ricci spaces $X$ are Gromov hyperbolic \cite[Theorem 4.8]{Kn} with pinched exponential growth, so that Theorem~\ref{t: main} implies 
the result of Anker et al. \cite[Corollary~3.22]{ADY} that $J_{X}=(1,\infty]$ and $\cM$ is of weak type $(1,1)$.  

Theorem~\ref{t: main} also improves previous results on trees with $(a,b)$-bounded geometry satisfying $a<b<a^2$ \cite[Theorem~3.2~\rmii]{LMSV} in two ways: 
it applies to a larger class of trees ($(a,b)$-bounded geometry implies $(a,b)$-pinched
volume exponential growth, but not conversely), and it yields a better endpoint estimate (weak type $(\tau,\tau)$ in place of restricted weak type $(\tau,\tau)$).

We observe also that higher rank noncompact symmetric spaces are not covered by Theorem~\ref{t: main}, because they are not Gromov hyperbolic.

Our analysis hinges on the (new) notion of spider's web, introduced in Definition~\ref{def: spider web}.  
Loosely speaking, a spider's web is a graph associated to a rooted tree that satisfies certain connectivity properties.  
The strategy of proof of Theorem~\ref{t: main} relies upon the following three facts:  
\begin{enumerate}
	\item[\itemno1]
		if $X$ belongs to $\cX_{a,b}^\de$, then $X$ is strictly roughly isometric to a spider's web~$\wh\Ga$, endowed with the graph distance $d_{\wh\Ga}$. 
		Moreover, the measure of the metric balls in $\wh\Ga$ (with respect to the counting measure on the set of its vertices) satisfies the bound~\eqref{f: pinched exp};
	\item[\itemno2]
		if $a\leq b<a^2$, then the maximal operator $\cM$ is bounded on $\lp{\wh\Ga}$ for $p>\tau$, and it is of weak type $(\tau,\tau)$;
	\item[\itemno3]
		the strict rough isometry in \rmi\ above can be used to ``transfer" to~$X$ the boundedness properties of $\cM$ on $\wh\Ga$ described in \rmii.  
\end{enumerate}

\noindent
It is already known that $\de$-hyperbolic spaces are strictly roughly isometric to graphs \cite[Section~5]{BI}.  However, we emphasise that it is highly nontrivial
to show that the approximating graph $\wh\Ga$ can be chosen to be a spider's web: this richer structure is needed in the proof of \rmii. 
Step \rmiii\ is a variant of \cite[Theorem~5.4]{LMSV} and of \cite[Theorem~5.5]{MPSV}, and reflects the fact that~$L^p$ boundedness properties of the maximal operator $\cM$
depend essentially on the ``large-scale geometry'' of $X$.

\smallskip
This paper is organised as follows.  Section~\ref{s: Preliminaries} is devoted to background material and preliminary results.
Section~\ref{s: Spider webs} contains more information and results concerning spider's webs associated to rooted trees.  
In Section~\ref{s: bounds on spider webs} we extend \cite[Theorem~3.2~\rmi]{LMSV} concerning the $L^p$-boundedness of the maximal operator $\cM$ on trees with
bounded geometry to spider's webs satisfying mild volume growth conditions.
In Section~\ref{s: Discretisation of Gromov} we prove that every connected noncompact complete $\de$-hyperbolic space $X$ (with distance $d$) is
strictly roughly isometric to a $\de$-hyperbolic spider's web $\wh\Ga$, with graph distance $d_{\wh\Ga}$.
Finally, in Section~\ref{s: main} we prove of our main result, and derive some consequences thereof.  

We use the ``variable constants convention'', and denote by~$c$ and $C$ constants whose value may vary from place to place and 
may depend on any factors quantified (implicitly or explicitly) before its occurrence, but not on factors quantified after.

\section[Background]{Background and preliminary results} \label{s: Preliminaries}
Suppose that $(X,d,\mu)$ is a metric measure space, where $\mu$ is a Borel measure on $(X,d)$, and denote by $\cB$ the family of all open balls in $X$.  
For each $B$ in $\cB$ and for any positive number $k$ we denote by $r_B$ the radius of~$B$ and by $k B$ the ball with the same centre as $B$ 
and radius $k r_B$.  For each $s$ in $\BR^+$, we denote by $\cB_s$ the family of all balls $B$ in~$\cB$ such that $r_B \leq s$.

\subsection{The local doubling property}
\label{ss: The local doubling property}
Assume that $0<\mu(B)<\infty$ for every~$B$ in $\cB$.  
We say that the metric measure space $X$ possesses the \emph{local doubling property} (LDP) if for every $s$ in $\BR^+$ there exists a constant $L_s$ such that
\begin{equation*}  \label{f: LDC} 
\mu \bigl(2 B\bigr)
\leq L_s \, \mu  \bigl(B\bigr)
\quant B \in \cB_s.
\end{equation*}

\begin{remark} \label{r: geom I}
	The LDP implies that for each $\tau \geq 1$ and for each $s$ in $\BR^+$ there exists a constant~$C$ such that
	\begin{equation} \label{f: doubling Dtau}
		\mu\bigl(B'\bigr)
		\leq C \, \mu(B)
	\end{equation}
	for each pair of balls $B$ and $B'$, with $B\subset B'$, $B$ in $\cB_s$, and $r_{B'}\leq \tau \, r_B$.  We shall denote by $L_{\tau,s}$ 
	the smallest constant for which (\ref{f: doubling Dtau}) holds.  
\end{remark}

\begin{remark} \label{r: geom II}
	Observe that if $X$ possesses the LDP and satisfies the growth condition~\eqref{f: pinched exp}, then the condition $0<\mu(B)<\infty$ is automatically 
	satisfied for every $B$ in $\cB$.  Indeed, the monotonicity of $\mu$ and the right hand inequality in \eqref{f: pinched exp} imply that $\mu(B)$ is 
	finite for every ball $B$.   

	Furthermore if $r_B > 1$, then $\mu(B) \geq c\, a^{r_B}>0$ by the left hand inequality in \eqref{f: pinched exp}.  If, instead $r_B<1$, then 
	$$
	c\, a \leq \mu\big((1/r_B) \, B\big)
	\leq L_{1/r_B,r_B}  \, \mu(B),
	$$
	as required.
\end{remark}

\subsection{Rough isometries}
A central notion in our investigation is the following.

\begin{definition} \label{def: RI}
Suppose that $X$ and $X'$ are two metric spaces, with distances $d$ and $d'$, respectively, and that $\la$ and $\be$ are nonnegative numbers with $\la\geq 1$.
A map $\vp : X \to X'$ is a $(\la,\be)$-rough isometry if 
$$
	\frac{1}{\la}\, d(x,y)-\be \le d'\big(\vp(x),\vp(y)\big) \le \la \, d(x,y)+\be
	\quant x,y \in X
$$
and 
\begin{equation} \label{f: RI uno}
	\sup\, \big\{d'\big(\vp(X),x'\big): x' \in X \big\}
	<\infty.
\end{equation}
If $\la = 1$, then $\vp$ is called a \textit{strict $\be$-rough isometry} (or simply a strict rough isometry): in this case 
\begin{equation} \label{f: RI due}
	d(x,y)-\be \le d'\big(\vp(x),\vp(y)\big) \le d(x,y)+\be
	\quant x,y \in X.
\end{equation}
\end{definition} 
As mentioned in the introduction, rough isometries are also known as quasi-isometries (see, for instance, \cite[Section 7.2]{Gr}).

\subsection{Gromov hyperbolicity}
Suppose that $(X,d)$ is a metric space.  The \textit{Gromov product} $(y,z)_x $ of two points $y$ and $z$ in $X$ with respect to a third point $x$ in $X$
is defined by the formula:
$$
(y,z)_x 
:= \frac{1}{2}\, \big(d(x,y)+d(x,z)-d(y,z)\big).
$$

\begin{definition} \label{def: de hyp metric space}
Suppose that $\de$ is a nonnegative number.  The metric space $(X,d)$ is $\de$-\textit{hyperbolic} if and only if the following \textit{four points condition} 
	is fulfilled: for every $x$, $y$, $z$ and $w$ in $X$
\begin{equation} \label{f: Gromov}
	(x,z)_w 
	\geq \min\big((x,y)_w,(y,z)_w\big)-\de.
\end{equation}
The space $(X,d)$ is called \textit{Gromov hyperbolic} if it is $\de$-hyperbolic for some $\de$.
\end{definition}

Note that if \eqref{f: Gromov} is satisfied for all points $x$, $y$ and $z$ and one fixed base point $w_0$, then it is satisfied for all base points $w$ 
with a constant $2\de$ (see, for instance, \cite[pp. 2--3]{CDP}).   Thus, in order to check that a space is Gromov hyperbolic, it suffices 
to check the hyperbolicity condition \eqref{f: Gromov} for one fixed base point. 

In the case where $X$ is a complete length space with strictly intrinsic metric $d$, it is well known that $(X,d)$ is $\de$-hyperbolic if and only if there exists $\de'$ such that every geodesic triangle in $X$ is $\de'$-\textit{slim}, i.e. each of its sides is contained in the $\de'$-neighbourhood of the union 
of the other two: see, for instance, \cite[Chapter~III, Proposition~1.22]{BH}.   

For a length space $X$ when we say that $X$ is $\de$-hyperbolic we refer to the constant~$\de$ appearing in the Definition~\ref{def: de hyp metric space}; 
if we want to refer to the constant $\de'$ that controls the slimness of triangles in $X$, we say that $X$ is a $\de'$-hyperbolic \textit{length} space.
In the cases where we are interested only in the slimness constant, we shall often call it $\de$ instead of $\de'$. 

\begin{remark} \label{rem: three points}
	We shall often use the following simple observation.  Suppose that $(X,d)$ is a $\de$-hyperbolic length space, and that $oxy$ is a geodesic
	triangle in $X$, with edges $[ox]$, $[oy]$ and $[xy]$.  Then there exists a point $p$ in $[xy]$ such that 
	$$
	\min\,\big(d(p,[ox]), d(p,[oy]\big) 
	< \de.
	$$

	Indeed, denote by $E$ and $F$ the set of all points in $[xy]$ at distance less than~$\de$ from $[ox]$ and from $[oy]$, respectively.  
	Observe that $E$ and $F$ are nonempty, for $E$ contains $x$ and $F$ contains $y$.  Since $X$ is a 
	$\de$-hyperbolic length space, the $\de$-neighbourhood of $[ox]\cup[oy]$ contains $[xy]$, so that  $E\cup F = [xy]$.  Since $[xy]$, $E$ and $F$ are open 
	(in the relative topology induced by $d$) and~$[xy]$ is connected, $E\cap F$ cannot be empty.  Any point in $E\cap F$ satisfies the inequality above.   
\end{remark}

	\begin{definition} \label{def: quasi-geodesic}
		Suppose that $(X,d)$ is a length space and that $\om$ and $K$ are nonnegative numbers, with $\om\geq 1$.  We say that a path $\ga$ in $X$
		is an $(\om,K)$-\textit{quasi-geodesic} if for every pair of points $p$ and $q$ in $\ga$
		$$
		\ell_X([pq]) 
		\leq \om \, d(p,q) + K;
		$$
		here $\ell_X([pq])$ denotes the length in $(X,d)$ of the segment in $\ga$ with endpoints~$p$ and $q$.  
		In particular, $(1,0)$-quasi-geodesics are just shortest paths.   

	Assume, in addition, that $(X,d)$ is a $\de$-hyperbolic length space. We say that $X$ is \textit{geodesically stable}
		if for every $\om$ in $[1,\infty)$ and each $K \ge 0$ there exists a constant $D_{\om,K}$, depending on $\de$, $\om$, and $K$ 
		such that each $(\om,K)$-quasi-geodesic $\ga$ belongs to the~$D_{\om,K}$-neighbourhood of any geodesic in $X$ joining 
		the endpoints of $\ga$.
\end{definition}

Next, we recall the well-known Morse Lemma (see, for instance, \cite[Proposition  3.1]{Bo}).

\begin{lemma} \label{lem: Morse}
	Any $\de$-hyperbolic length space is {geodesically stable}.
\end{lemma}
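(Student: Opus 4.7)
The plan is to parametrize the $(\om,K)$-quasi-geodesic $\ga$ by arc length, so that $\ga\colon[0,L]\to X$ satisfies $|s-t|\leq\om\,d(\ga(s),\ga(t))+K$ for all $s,t\in[0,L]$. Let $f(t):=d(\ga(t),[xy])$, a continuous function with $f(0)=f(L)=0$, and set $D:=\max_{t}f(t)$, attained at some $t^*$. The goal is to bound $D$ by a constant depending only on $\de,\om,K$.

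Pick $t_1\leq t^*\leq t_2$ with $f(t_1)=f(t_2)=D/2$ and $f\geq D/2$ throughout $[t_1,t_2]$; let $q_i\in[xy]$ realise $d(\ga(t_i),q_i)=D/2$, and set $\alpha:=\ga|_{[t_1,t_2]}$, a continuous path from $\ga(t_1)$ to $\ga(t_2)$. The argument rests on two competing estimates. On the one hand, the detour $\ga(t_1)\to q_1\to q_2\to\ga(t_2)$ yields $d(\ga(t_1),\ga(t_2))\leq D+d(q_1,q_2)$, so the quasi-geodesic inequality gives the upper bound $t_2-t_1\leq\om\bigl(D+d(q_1,q_2)\bigr)+K$. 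On the other hand, after checking via the $\de$-slimness of the quadrilateral $\ga(t_1)q_1q_2\ga(t_2)$, split as two triangles, that $\alpha$ stays at distance at least $D/2-O(\de)$ from the geodesic $[\ga(t_1)\ga(t_2)]$, one invokes the exponential divergence of paths in $\de$-hyperbolic spaces to obtain a lower bound of the form
\[
\ell_X(\alpha)\geq d\bigl(\ga(t_1),\ga(t_2)\bigr)\cdot 2^{(D/2-C\de)/C\de}
\]
for an absolute constant $C$.

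Combining these two bounds forces
\[
d\bigl(\ga(t_1),\ga(t_2)\bigr)\cdot\bigl(2^{(D/2-C\de)/C\de}-\om\bigr)\leq K,
\]
which is incompatible with the elementary lower bound $d(\ga(t_1),\ga(t_2))\geq(t_2-t_1-K)/\om\geq(D-K)/\om$ once $D$ exceeds a threshold $D_{\om,K}$ depending only on $\de,\om,K$. This is exactly the assertion of the lemma.

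The main obstacle is proving the exponential divergence estimate cleanly. The standard derivation proceeds by a dyadic bisection on $\alpha$, in the spirit of Remark~\ref{rem: three points}: at each step one compares a point of $[\ga(t_1)\ga(t_2)]$ to the midpoint of the relevant sub-arc of $\alpha$ using $\de$-slimness, iterating $\log_2\ell_X(\alpha)$ times and at each stage losing a controlled factor. The delicate point is to track the constants so that the final threshold $D_{\om,K}$ depends only on $\de,\om,K$ and not on $d(x,y)$, which is why the naive combination of slimness and the quasi-geodesic inequality (which only gives a logarithmic dependence on $d(x,y)$) must be bootstrapped through the exponential-divergence mechanism.
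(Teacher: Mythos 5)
The paper does not prove this lemma: it is the Morse Lemma, quoted from the literature (the citation is to Bonk, and the standard reference is Bridson--Haefliger, Ch.~III.H, Theorem~1.7), so there is no ``paper's proof'' to compare against. Your sketch has the right general shape (reduce to a sub-arc whose length you bound above linearly in $D$ and below exponentially in $D$, then force $D$ to be bounded), but several of the concrete steps fail.

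First, the exponential-divergence estimate you invoke,
$\ell_X(\alpha)\geq d\bigl(\gamma(t_1),\gamma(t_2)\bigr)\cdot 2^{(D/2-C\delta)/C\delta}$,
is not what the standard divergence lemma gives. The correct statement (Bridson--Haefliger III.H.1.6) is: if $\alpha$ is a rectifiable path with the same endpoints as a geodesic $[xy]$, then every $w\in[xy]$ satisfies $d(w,\alpha)\leq \delta\log_2\ell_X(\alpha)+1$; in particular, if $\alpha$ stays at distance $\geq R$ from $[xy]$ one gets $\ell_X(\alpha)\geq 2^{(R-1)/\delta}$, with \emph{no} multiplicative factor $d(\gamma(t_1),\gamma(t_2))$. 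This matters: it is precisely the spurious factor that makes your final algebra close. With the corrected lower bound and with your choice of $t_1,t_2$ (last/first crossings of $D/2$), the upper bound $t_2-t_1\leq\omega(D+d(q_1,q_2))+K$ still involves $d(q_1,q_2)$, which is uncontrolled (you only know $d(q_1,q_2)\leq D+d(\gamma(t_1),\gamma(t_2))\leq D+(t_2-t_1)$, which is circular since $\omega\geq 1$). Second, the claim that ``$\alpha$ stays at distance at least $D/2-O(\delta)$ from the geodesic $[\gamma(t_1)\gamma(t_2)]$'' is not established by the quadrilateral-slimness computation you describe, and indeed cannot be true as stated since $\alpha$ and $[\gamma(t_1)\gamma(t_2)]$ share the endpoints $\gamma(t_1),\gamma(t_2)$. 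Slimness applied to $\gamma(t_1)q_1q_2\gamma(t_2)$ only gives that every $w\in[\gamma(t_1)\gamma(t_2)]$ is within $2\delta$ of the detour path, hence within $D/2+2\delta$ of $[xy]$; combined with $d(\alpha,[xy])\geq D/2$, the triangle inequality yields $d(\alpha,w)\geq D/2-(D/2+2\delta)=-2\delta$, which is vacuous. The standard repair (as in Bridson--Haefliger) is to set $t_1:=t^*-2D$ and $t_2:=t^*+2D$ (truncated at the endpoints), so that the arc-length parametrisation gives $\ell_X(\alpha)\leq 4D$ \emph{a priori}, and to apply the divergence estimate to the short detour path $\sigma$ (of length $\leq 8D$, lying entirely within distance $D$ of $[xy]$) rather than to $\alpha$ itself, playing this off against the $\delta$-thinness of the resulting quadrilateral. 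As written, your proof has a genuine gap.
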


One of the nice features of Gromov hyperbolicity for length spaces is that it is preserved under rough isometric embeddings.  Recall the following result
(for its proof see, for instance, \cite[Chapter~III, Theorem~1.9]{BH}).

\begin{theorem}  \label{t: rqi hyp}
	Suppose that $X$ and $X'$ are complete length metric spaces and that $\vp: X' \to X$ is a $(\la,\be)$-roughly isometric embedding. 
	If $X$ is a $\de$-hyperbolic length space, then $X'$ is a $\de'$-hyperbolic length space, where $\de'$ depends on $\de$, $\la$ and $\be$. 
\end{theorem}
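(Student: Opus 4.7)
The plan is to exploit the characterisation of Gromov hyperbolicity for length spaces via slim triangles, together with the stability of quasi-geodesics guaranteed by the Morse Lemma (Lemma~\ref{lem: Morse}). Since $X'$ is a complete length space, it suffices to show that geodesic triangles in $X'$ are uniformly $\de''$-slim for some constant $\de''$ depending only on $\de$, $\la$ and $\be$; standard equivalences will then yield a constant $\de'$ verifying the four-point condition.

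First, I would fix a geodesic triangle $T'$ in $X'$ with vertices $a',b',c'$ and sides $[a'b']$, $[b'c']$, $[c'a']$. The main technical step is to construct, from each side of $T'$, an $(\om,K)$-quasi-geodesic in $X$ joining the images of its endpoints, with $\om,K$ depending only on $\la$ and $\be$. Since $\vp$ is not assumed to be continuous, the naive image $\vp([a'b'])$ need not be a path. I would remedy this by sampling a side at arclength-spaced points $t_0<t_1<\cdots<t_n$ in the parameter interval with spacing~$1$, and then interpolating between consecutive images $\vp(\ga(t_i))$ by shortest paths in $X$ (here I use that $X$ is a complete length space, so such paths exist). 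A direct calculation using the rough isometry inequalities \eqref{f: RI due} (or their $(\la,\be)$-analogue) shows that the resulting continuous path is an $(\om,K)$-quasi-geodesic, with constants that depend only on $\la$ and $\be$.

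Next, I would connect $\vp(a'),\vp(b'),\vp(c')$ by actual geodesics in $X$ to form a geodesic triangle~$T$ in~$X$; since $X$ is $\de$-hyperbolic as a length space, $T$ is $\de$-slim. By the Morse Lemma, each of the three quasi-geodesics constructed above lies in the $D$-neighbourhood of the corresponding side of $T$, where $D = D_{\om,K}$ depends only on $\de$, $\la$, and $\be$. For any $p$ on a side, say $[a'b']$, I chase the following chain: $\vp(p)$ is within $D$ of some point on the geodesic $[\vp(a')\vp(b')]$ in $X$ (Morse); that point is within $\de$ of a point $q$ on one of the other two sides of $T$ (slimness); $q$ is within $D$ of a point on the corresponding quasi-geodesic side (Morse again); and finally that point is within $1$ of the $\vp$-image of some sample point $\vp(p')$, hence within $\la+\be$ of $\vp(p')$ for some $p'$ on a different side of $T'$, after another application of the rough isometry bound to account for the sampling. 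Putting the bounds together yields $d_X(\vp(p),\vp(p')) \leq C(\de,\la,\be)$.

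Finally, I would pull back to $X'$ using the lower bound in the rough isometry inequality:
\[
\frac{1}{\la}\, d_{X'}(p,p') - \be \leq d_X\big(\vp(p),\vp(p')\big),
\]
which gives $d_{X'}(p,p') \leq \la\,(C(\de,\la,\be)+\be) =: \de''$, establishing the uniform slimness of $T'$. The standard equivalence between slim triangles and the four-point condition in complete length spaces then provides a constant $\de'$ depending only on $\de$, $\la$, $\be$ such that $X'$ is $\de'$-hyperbolic in the sense of Definition~\ref{def: de hyp metric space}. I expect the main obstacle to be the discretisation/interpolation construction in the first step: one must be careful that the interpolated path in $X$ is genuinely an $(\om,K)$-quasi-geodesic in the strong sense required by Definition~\ref{def: quasi-geodesic} (a length bound on every subsegment, not only between consecutive sample points), which requires combining the bounds along all intermediate samples rather than just using the endpoint-to-endpoint inequality.
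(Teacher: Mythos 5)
Your argument is correct and is essentially the standard route — the one the paper itself defers to, citing \cite[Chapter~III, Theorem~1.9]{BH} — namely: discretise geodesics in $X'$, interpolate their $\vp$-images by geodesics in $X$ to obtain $(\om,K)$-quasi-geodesics with constants depending only on $(\la,\be)$, apply the Morse Lemma together with slimness of triangles in $X$, and pull the resulting slimness estimate back to $X'$ via the lower rough-isometry bound. One small step you gloss over (``Morse again'') is that Lemma~\ref{lem: Morse} as stated only gives $\ga\subset N_{D}(\text{geodesic})$, whereas your chain also needs the converse containment; this follows by a short connectedness argument using that the two paths share endpoints, and is standard, but is worth spelling out.
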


It may be worth observing that the conclusion of Theorem~\ref{t: rqi hyp} fails if we omit the assumption that $X$ is a length space: see \cite[Example~13, p.~89]{dLHG}.  

\subsection{Graphs, trees and spider's webs}  \label{ss: Metric graphs}
Suppose that $\Ga$ is an undirected connected graph (without self-loops and multi-edges).   
If $x$ and $y$ are vertices in $\Ga$ connected by an edge, then we say that $x$ and~$y$ are \textit{neighbours}, and write $x\sim y$.  We shall always denote
by $\mu_\Ga$ the \textit{counting measure} on the vertices of $\Ga$.   

In this paper we shall consider only graphs $\Ga$ such that for each vertex $v$ in $\Ga$ the number $\nu(v)$ of its neighbours is finite:
$\nu(v)$ is called the \textit{valence} of~$v$.  If
\begin{equation} \label{f: bounded valence}
	\sup\,\{\nu(v): v \in \Ga\} < \infty,
\end{equation}
then we say that $\Ga$ has \textit{bounded valence}.  

A path $\ga$ in $\Ga$ joining two vertices $v$ and $w$ is a finite sequence $[x_0 := v, x_1.\ldots, x_{N-1}, x_N =: w]$  of vertices
such that $x_j\sim x_{j+1}$ for every $j$ in $\{0,\ldots,N-1\}$.  The \textit{length} of $\ga$ is defined to be $N$.  Since $\Ga$ is connected, 
given two vertices~$v$ and $w$, there exists at least one path joining them.  The graph distance $d_\Ga$ between $v$ and~$w$ is just the minimum 
of the lengths of all paths joining~$v$ and $w$.  

A \textit{tree} $\mrT$ is just a graph as above without loops.  A \textit{rooted} tree is a tree $\mrT$ with a distinguished point $o$, called the root of the tree.

We now introduce the notion of spider's web, which is central to our investigation.   Consider a rooted tree $\mrT$ with root $o$.  
Given a vertex~$x$ in $\mrT\setminus\{o\}$, we write $p(x)$ for its predecessor, i.e. the unique neighbour~$y$ of $x$ such that $d_\mrT(y,o) = d_\mrT(x,o)-1$.  
For every $x$ in $\mrT$ we  set $p^0(x):=x$ and for every positive integer $k$ we inductively define $p^k(x)=p(p^{k-1}(x))$.
The (possibly empty) set of the neighbours of~$x$ such that $d_\mrT(y,o) = d_\mrT(x,o)+1$ is denoted by $s(x)$.  Each vertex in $s(x)$ (if any) 
is called a \textit{successor} of $x$.
Similarly, for every $x$ in $\mrT$ we set $s^1(x):=s(x)$ and, for every $r \ge 2$, 
$$
s^r(x)
:= \ds\bigcup_{y \in s^{r-1}(x)} s(y).
$$

Suppose that $k$ is a nonnegative integer.  We set 
\begin{equation} \label{f: Sik}
\Sigma_k
:= \big\{x \in \mrT: d_\mrT(x,o) = k \big\}.\end{equation}
$\Si_k$ is called the \textit{sphere} with centre $o$ and radius $k$ in the rooted tree $\mrT$.  We say that each point in $\Si_k$ has \textit{level} $k$.
The \textit{level} of a vertex $x$ is denoted by~$h(x)$.  Note that $h(x)$ increases as we move away from~$o$
in $\mrT$.  

\begin{definition} \label{def: spider web}
	A \textit{spider's web} $\wh\Ga$ is a graph without self-loops obtained from a rooted tree $\mathrm{T}$ by adding a (possibly empty) 
	new set of edges, according to the following rules:
	\begin{enumerate}
		\item[\itemno1] 
			two vertices from different tree levels are never joined in $\wh\Ga$ by a new edge;
		\item[\itemno2] 
			if two  vertices of the same level are connected by an edge, then their predecessors either coincide or they are neighbours.
	\end{enumerate}
	A \textit{quasi-spider's web} $\Ga$ is a graph defined much as a spider's web, with the only difference that condition \rmii\ above is 
	replaced by the following weaker condition:
	\begin{enumerate}
		\item[\itemno2$'$] 
			there exists a positive integer $m$ such that if any two vertices of $\Si_n$, with $n\geq m$ are connected by an edge, then for every integer~$k$ 
			in $\{m, \ldots,n\}$, their $k^{\mathrm{th}}$ predecessors either coincide or they are neighbours.
	\end{enumerate}
\end{definition}

Note that a rooted tree is a very special spider's web in which  vertices belonging to the same level are never connected by an edge. 
We warn the reader that a spider's web may very well be a non-planar graph.

The prototype of spider's web is the so-called dyadic spider's web $\wh\Ga_2$, which we briefly discuss now.  Let $\BF_2$ the free monoid on two generators, i.e. the set of 
finite length words with two letters $0$ and $1$.  Let an edge connect two words if and only if one is obtained by the other by adding one letter in the end (tree edges) or if 
two words have the same length and one is the immediate successor of the other in the lexicographic order or if one is the all $0$’s and the other is the all $1$’s word.

\tikzset{every picture/.style={line width=0.75pt}} 
\begin{figure}
    \centering
\begin{tikzpicture}[x=0.75pt,y=0.75pt,yscale=-1,xscale=1, scale=0.6]

\draw   (385.12,222.69) .. controls (385.12,160.22) and (434.98,109.59) .. (496.49,109.59) .. controls (558,109.59) and (607.86,160.22) .. (607.86,222.69) .. controls (607.86,285.15) and (558,335.78) .. (496.49,335.78) .. controls (434.98,335.78) and (385.12,285.15) .. (385.12,222.69) -- cycle ;
\draw   (414.55,222.69) .. controls (414.55,176.73) and (451.23,139.47) .. (496.49,139.47) .. controls (541.75,139.47) and (578.43,176.73) .. (578.43,222.69) .. controls (578.43,268.64) and (541.75,305.9) .. (496.49,305.9) .. controls (451.23,305.9) and (414.55,268.64) .. (414.55,222.69) -- cycle ;
\draw  [color={rgb, 255:red, 0; green, 0; blue, 0 }  ,draw opacity=1 ][fill={rgb, 255:red, 0; green, 0; blue, 0 }  ,fill opacity=1 ] (493.41,222.69) .. controls (493.41,221.03) and (494.79,219.69) .. (496.49,219.69) .. controls (498.19,219.69) and (499.57,221.03) .. (499.57,222.69) .. controls (499.57,224.34) and (498.19,225.68) .. (496.49,225.68) .. controls (494.79,225.68) and (493.41,224.34) .. (493.41,222.69) -- cycle ;
\draw  [color={rgb, 255:red, 0; green, 0; blue, 0 }  ,draw opacity=1 ][fill={rgb, 255:red, 0; green, 0; blue, 0 }  ,fill opacity=1 ] (450.17,222.69) .. controls (450.17,221.03) and (451.55,219.69) .. (453.25,219.69) .. controls (454.95,219.69) and (456.32,221.03) .. (456.32,222.69) .. controls (456.32,224.34) and (454.95,225.68) .. (453.25,225.68) .. controls (451.55,225.68) and (450.17,224.34) .. (450.17,222.69) -- cycle ;
\draw  [color={rgb, 255:red, 0; green, 0; blue, 0 }  ,draw opacity=1 ][fill={rgb, 255:red, 0; green, 0; blue, 0 }  ,fill opacity=1 ] (536.66,222.69) .. controls (536.66,221.03) and (538.04,219.69) .. (539.74,219.69) .. controls (541.44,219.69) and (542.81,221.03) .. (542.81,222.69) .. controls (542.81,224.34) and (541.44,225.68) .. (539.74,225.68) .. controls (538.04,225.68) and (536.66,224.34) .. (536.66,222.69) -- cycle ;
\draw  [color={rgb, 255:red, 0; green, 0; blue, 0 }  ,draw opacity=1 ][fill={rgb, 255:red, 0; green, 0; blue, 0 }  ,fill opacity=1 ] (537.59,119.45) .. controls (537.59,117.8) and (538.96,116.46) .. (540.66,116.46) .. controls (542.36,116.46) and (543.74,117.8) .. (543.74,119.45) .. controls (543.74,121.11) and (542.36,122.44) .. (540.66,122.44) .. controls (538.96,122.44) and (537.59,121.11) .. (537.59,119.45) -- cycle ;
\draw  [color={rgb, 255:red, 0; green, 0; blue, 0 }  ,draw opacity=1 ][fill={rgb, 255:red, 0; green, 0; blue, 0 }  ,fill opacity=1 ] (391.93,174.48) .. controls (391.93,172.83) and (393.31,171.49) .. (395.01,171.49) .. controls (396.71,171.49) and (398.09,172.83) .. (398.09,174.48) .. controls (398.09,176.13) and (396.71,177.47) .. (395.01,177.47) .. controls (393.31,177.47) and (391.93,176.13) .. (391.93,174.48) -- cycle ;
\draw  [color={rgb, 255:red, 0; green, 0; blue, 0 }  ,draw opacity=1 ][fill={rgb, 255:red, 0; green, 0; blue, 0 }  ,fill opacity=1 ] (592.6,274.21) .. controls (592.6,272.56) and (593.98,271.22) .. (595.68,271.22) .. controls (597.38,271.22) and (598.76,272.56) .. (598.76,274.21) .. controls (598.76,275.86) and (597.38,277.2) .. (595.68,277.2) .. controls (593.98,277.2) and (592.6,275.86) .. (592.6,274.21) -- cycle ;
\draw  [color={rgb, 255:red, 0; green, 0; blue, 0 }  ,draw opacity=1 ][fill={rgb, 255:red, 0; green, 0; blue, 0 }  ,fill opacity=1 ] (531.45,328.07) .. controls (531.45,326.41) and (532.83,325.08) .. (534.53,325.08) .. controls (536.23,325.08) and (537.61,326.41) .. (537.61,328.07) .. controls (537.61,329.72) and (536.23,331.06) .. (534.53,331.06) .. controls (532.83,331.06) and (531.45,329.72) .. (531.45,328.07) -- cycle ;
\draw  [color={rgb, 255:red, 0; green, 0; blue, 0 }  ,draw opacity=1 ][fill={rgb, 255:red, 0; green, 0; blue, 0 }  ,fill opacity=1 ] (448.89,119.3) .. controls (448.89,117.65) and (450.27,116.31) .. (451.97,116.31) .. controls (453.67,116.31) and (455.05,117.65) .. (455.05,119.3) .. controls (455.05,120.95) and (453.67,122.29) .. (451.97,122.29) .. controls (450.27,122.29) and (448.89,120.95) .. (448.89,119.3) -- cycle ;
\draw  [color={rgb, 255:red, 0; green, 0; blue, 0 }  ,draw opacity=1 ][fill={rgb, 255:red, 0; green, 0; blue, 0 }  ,fill opacity=1 ] (449.85,327.22) .. controls (449.85,325.57) and (451.23,324.23) .. (452.93,324.23) .. controls (454.63,324.23) and (456.01,325.57) .. (456.01,327.22) .. controls (456.01,328.88) and (454.63,330.22) .. (452.93,330.22) .. controls (451.23,330.22) and (449.85,328.88) .. (449.85,327.22) -- cycle ;
\draw  [color={rgb, 255:red, 0; green, 0; blue, 0 }  ,draw opacity=1 ][fill={rgb, 255:red, 0; green, 0; blue, 0 }  ,fill opacity=1 ] (550.68,283.1) .. controls (550.68,281.45) and (552.06,280.11) .. (553.76,280.11) .. controls (555.46,280.11) and (556.83,281.45) .. (556.83,283.1) .. controls (556.83,284.75) and (555.46,286.09) .. (553.76,286.09) .. controls (552.06,286.09) and (550.68,284.75) .. (550.68,283.1) -- cycle ;
\draw  [color={rgb, 255:red, 0; green, 0; blue, 0 }  ,draw opacity=1 ][fill={rgb, 255:red, 0; green, 0; blue, 0 }  ,fill opacity=1 ] (551.34,164.09) .. controls (551.34,162.44) and (552.71,161.1) .. (554.41,161.1) .. controls (556.11,161.1) and (557.49,162.44) .. (557.49,164.09) .. controls (557.49,165.74) and (556.11,167.08) .. (554.41,167.08) .. controls (552.71,167.08) and (551.34,165.74) .. (551.34,164.09) -- cycle ;
\draw  [color={rgb, 255:red, 0; green, 0; blue, 0 }  ,draw opacity=1 ][fill={rgb, 255:red, 0; green, 0; blue, 0 }  ,fill opacity=1 ] (434.82,282.52) .. controls (434.82,280.87) and (436.19,279.53) .. (437.89,279.53) .. controls (439.59,279.53) and (440.97,280.87) .. (440.97,282.52) .. controls (440.97,284.18) and (439.59,285.52) .. (437.89,285.52) .. controls (436.19,285.52) and (434.82,284.18) .. (434.82,282.52) -- cycle ;
\draw  [color={rgb, 255:red, 0; green, 0; blue, 0 }  ,draw opacity=1 ][fill={rgb, 255:red, 0; green, 0; blue, 0 }  ,fill opacity=1 ] (435.45,165.09) .. controls (435.45,163.43) and (436.83,162.09) .. (438.53,162.09) .. controls (440.23,162.09) and (441.6,163.43) .. (441.6,165.09) .. controls (441.6,166.74) and (440.23,168.08) .. (438.53,168.08) .. controls (436.83,168.08) and (435.45,166.74) .. (435.45,165.09) -- cycle ;
\draw  [color={rgb, 255:red, 0; green, 0; blue, 0 }  ,draw opacity=1 ][fill={rgb, 255:red, 0; green, 0; blue, 0 }  ,fill opacity=1 ] (593.91,173.48) .. controls (593.91,171.83) and (595.29,170.49) .. (596.99,170.49) .. controls (598.69,170.49) and (600.07,171.83) .. (600.07,173.48) .. controls (600.07,175.14) and (598.69,176.48) .. (596.99,176.48) .. controls (595.29,176.48) and (593.91,175.14) .. (593.91,173.48) -- cycle ;
\draw    (453.25,222.69) -- (539.74,222.69) ;
\draw    (554.41,167.08) -- (539.74,222.69) ;
\draw    (395.01,174.48) -- (438.53,165.09) ;
\draw  [color={rgb, 255:red, 255; green, 255; blue, 255 }  ,draw opacity=1 ][line width=4.5] [line join = round][line cap = round] (456.72,237.18) .. controls (456.28,235.41) and (454.91,233.62) .. (455.41,231.86) .. controls (455.81,230.43) and (456.78,234.55) .. (457.04,236.02) .. controls (457.52,238.67) and (455.13,233.88) .. (455.9,234.85) .. controls (456.25,235.3) and (456.46,235.84) .. (456.72,236.35) .. controls (456.91,236.75) and (456.28,235.57) .. (456.06,235.18) ;
\draw  [color={rgb, 255:red, 255; green, 255; blue, 255 }  ,draw opacity=1 ][line width=4.5] [line join = round][line cap = round] (457.37,232.19) .. controls (456.48,231.11) and (454.62,229.8) .. (455.24,228.54) ;
\draw  [color={rgb, 255:red, 255; green, 255; blue, 255 }  ,draw opacity=1 ][line width=0.75] [line join = round][line cap = round] (454.75,228.54) .. controls (454.36,227.74) and (453.17,227.61) .. (454.1,227.37) ;
\draw  [color={rgb, 255:red, 255; green, 255; blue, 255 }  ,draw opacity=1 ][line width=1.5] [line join = round][line cap = round] (538.07,234.19) .. controls (538.28,233.3) and (538.08,232.17) .. (538.72,231.53) .. controls (539.11,231.13) and (538.97,233.75) .. (539.05,233.19) .. controls (539.29,231.55) and (539.16,229.87) .. (539.21,228.2) ;
\draw  [color={rgb, 255:red, 255; green, 255; blue, 255 }  ,draw opacity=1 ][line width=1.5] [line join = round][line cap = round] (454.04,228.35) .. controls (454.04,227.96) and (454.04,227.57) .. (454.04,227.18) ;
\draw  [color={rgb, 255:red, 255; green, 255; blue, 255 }  ,draw opacity=1 ][line width=1.5] [line join = round][line cap = round] (539.48,228.85) .. controls (539.43,228.24) and (539.38,227.63) .. (539.32,227.02) ;
\draw   (346.36,222.69) .. controls (346.36,138.49) and (413.58,70.23) .. (496.49,70.23) .. controls (579.4,70.23) and (646.62,138.49) .. (646.62,222.69) .. controls (646.62,306.88) and (579.4,375.14) .. (496.49,375.14) .. controls (413.58,375.14) and (346.36,306.88) .. (346.36,222.69) -- cycle ;
\draw  [color={rgb, 255:red, 0; green, 0; blue, 0 }  ,draw opacity=1 ][fill={rgb, 255:red, 0; green, 0; blue, 0 }  ,fill opacity=1 ] (640.95,192.43) .. controls (640.95,190.78) and (642.32,189.44) .. (644.02,189.44) .. controls (645.72,189.44) and (647.1,190.78) .. (647.1,192.43) .. controls (647.1,194.09) and (645.72,195.43) .. (644.02,195.43) .. controls (642.32,195.43) and (640.95,194.09) .. (640.95,192.43) -- cycle ;
\draw  [color={rgb, 255:red, 0; green, 0; blue, 0 }  ,draw opacity=1 ][fill={rgb, 255:red, 0; green, 0; blue, 0 }  ,fill opacity=1 ] (637.45,267.34) .. controls (637.45,265.69) and (638.83,264.35) .. (640.53,264.35) .. controls (642.23,264.35) and (643.61,265.69) .. (643.61,267.34) .. controls (643.61,268.99) and (642.23,270.33) .. (640.53,270.33) .. controls (638.83,270.33) and (637.45,268.99) .. (637.45,267.34) -- cycle ;
\draw  [color={rgb, 255:red, 0; green, 0; blue, 0 }  ,draw opacity=1 ][fill={rgb, 255:red, 0; green, 0; blue, 0 }  ,fill opacity=1 ] (616.5,311.22) .. controls (616.5,309.57) and (617.88,308.23) .. (619.58,308.23) .. controls (621.28,308.23) and (622.66,309.57) .. (622.66,311.22) .. controls (622.66,312.88) and (621.28,314.22) .. (619.58,314.22) .. controls (617.88,314.22) and (616.5,312.88) .. (616.5,311.22) -- cycle ;
\draw  [color={rgb, 255:red, 0; green, 0; blue, 0 }  ,draw opacity=1 ][fill={rgb, 255:red, 0; green, 0; blue, 0 }  ,fill opacity=1 ] (572.11,352.4) .. controls (572.11,350.75) and (573.49,349.41) .. (575.19,349.41) .. controls (576.89,349.41) and (578.27,350.75) .. (578.27,352.4) .. controls (578.27,354.05) and (576.89,355.39) .. (575.19,355.39) .. controls (573.49,355.39) and (572.11,354.05) .. (572.11,352.4) -- cycle ;
\draw  [color={rgb, 255:red, 0; green, 0; blue, 0 }  ,draw opacity=1 ][fill={rgb, 255:red, 0; green, 0; blue, 0 }  ,fill opacity=1 ] (518.29,374.61) .. controls (518.29,372.96) and (519.67,371.62) .. (521.37,371.62) .. controls (523.07,371.62) and (524.45,372.96) .. (524.45,374.61) .. controls (524.45,376.26) and (523.07,377.6) .. (521.37,377.6) .. controls (519.67,377.6) and (518.29,376.26) .. (518.29,374.61) -- cycle ;
\draw  [color={rgb, 255:red, 0; green, 0; blue, 0 }  ,draw opacity=1 ][fill={rgb, 255:red, 0; green, 0; blue, 0 }  ,fill opacity=1 ] (462.38,371.68) .. controls (462.38,370.03) and (463.76,368.69) .. (465.46,368.69) .. controls (467.16,368.69) and (468.53,370.03) .. (468.53,371.68) .. controls (468.53,373.34) and (467.16,374.67) .. (465.46,374.67) .. controls (463.76,374.67) and (462.38,373.34) .. (462.38,371.68) -- cycle ;
\draw  [color={rgb, 255:red, 0; green, 0; blue, 0 }  ,draw opacity=1 ][fill={rgb, 255:red, 0; green, 0; blue, 0 }  ,fill opacity=1 ] (464.61,74.09) .. controls (464.61,72.43) and (465.98,71.09) .. (467.68,71.09) .. controls (469.38,71.09) and (470.76,72.43) .. (470.76,74.09) .. controls (470.76,75.74) and (469.38,77.08) .. (467.68,77.08) .. controls (465.98,77.08) and (464.61,75.74) .. (464.61,74.09) -- cycle ;
\draw  [color={rgb, 255:red, 0; green, 0; blue, 0 }  ,draw opacity=1 ][fill={rgb, 255:red, 0; green, 0; blue, 0 }  ,fill opacity=1 ] (522.88,72.09) .. controls (522.88,70.44) and (524.25,69.1) .. (525.95,69.1) .. controls (527.65,69.1) and (529.03,70.44) .. (529.03,72.09) .. controls (529.03,73.74) and (527.65,75.08) .. (525.95,75.08) .. controls (524.25,75.08) and (522.88,73.74) .. (522.88,72.09) -- cycle ;
\draw  [color={rgb, 255:red, 0; green, 0; blue, 0 }  ,draw opacity=1 ][fill={rgb, 255:red, 0; green, 0; blue, 0 }  ,fill opacity=1 ] (411.57,96.03) .. controls (411.57,94.37) and (412.95,93.04) .. (414.65,93.04) .. controls (416.35,93.04) and (417.73,94.37) .. (417.73,96.03) .. controls (417.73,97.68) and (416.35,99.02) .. (414.65,99.02) .. controls (412.95,99.02) and (411.57,97.68) .. (411.57,96.03) -- cycle ;
\draw  [color={rgb, 255:red, 0; green, 0; blue, 0 }  ,draw opacity=1 ][fill={rgb, 255:red, 0; green, 0; blue, 0 }  ,fill opacity=1 ] (615.19,134.59) .. controls (615.19,132.94) and (616.57,131.6) .. (618.27,131.6) .. controls (619.97,131.6) and (621.35,132.94) .. (621.35,134.59) .. controls (621.35,136.24) and (619.97,137.58) .. (618.27,137.58) .. controls (616.57,137.58) and (615.19,136.24) .. (615.19,134.59) -- cycle ;
\draw  [color={rgb, 255:red, 0; green, 0; blue, 0 }  ,draw opacity=1 ][fill={rgb, 255:red, 0; green, 0; blue, 0 }  ,fill opacity=1 ] (576.56,95.36) .. controls (576.56,93.71) and (577.94,92.37) .. (579.64,92.37) .. controls (581.34,92.37) and (582.72,93.71) .. (582.72,95.36) .. controls (582.72,97.01) and (581.34,98.35) .. (579.64,98.35) .. controls (577.94,98.35) and (576.56,97.01) .. (576.56,95.36) -- cycle ;
\draw  [color={rgb, 255:red, 0; green, 0; blue, 0 }  ,draw opacity=1 ][fill={rgb, 255:red, 0; green, 0; blue, 0 }  ,fill opacity=1 ] (394.55,274.21) .. controls (394.55,272.56) and (395.93,271.22) .. (397.63,271.22) .. controls (399.33,271.22) and (400.7,272.56) .. (400.7,274.21) .. controls (400.7,275.86) and (399.33,277.2) .. (397.63,277.2) .. controls (395.93,277.2) and (394.55,275.86) .. (394.55,274.21) -- cycle ;
\draw  [color={rgb, 255:red, 0; green, 0; blue, 0 }  ,draw opacity=1 ][fill={rgb, 255:red, 0; green, 0; blue, 0 }  ,fill opacity=1 ] (369.67,137.91) .. controls (369.67,136.26) and (371.05,134.92) .. (372.75,134.92) .. controls (374.45,134.92) and (375.82,136.26) .. (375.82,137.91) .. controls (375.82,139.57) and (374.45,140.91) .. (372.75,140.91) .. controls (371.05,140.91) and (369.67,139.57) .. (369.67,137.91) -- cycle ;
\draw  [color={rgb, 255:red, 0; green, 0; blue, 0 }  ,draw opacity=1 ][fill={rgb, 255:red, 0; green, 0; blue, 0 }  ,fill opacity=1 ] (346.1,190.44) .. controls (346.1,188.79) and (347.48,187.45) .. (349.18,187.45) .. controls (350.88,187.45) and (352.25,188.79) .. (352.25,190.44) .. controls (352.25,192.09) and (350.88,193.43) .. (349.18,193.43) .. controls (347.48,193.43) and (346.1,192.09) .. (346.1,190.44) -- cycle ;
\draw  [color={rgb, 255:red, 0; green, 0; blue, 0 }  ,draw opacity=1 ][fill={rgb, 255:red, 0; green, 0; blue, 0 }  ,fill opacity=1 ] (348.72,257.59) .. controls (348.72,255.94) and (350.1,254.6) .. (351.8,254.6) .. controls (353.5,254.6) and (354.87,255.94) .. (354.87,257.59) .. controls (354.87,259.24) and (353.5,260.58) .. (351.8,260.58) .. controls (350.1,260.58) and (348.72,259.24) .. (348.72,257.59) -- cycle ;
\draw  [color={rgb, 255:red, 0; green, 0; blue, 0 }  ,draw opacity=1 ][fill={rgb, 255:red, 0; green, 0; blue, 0 }  ,fill opacity=1 ] (368.95,309.05) .. controls (368.95,307.4) and (370.33,306.06) .. (372.03,306.06) .. controls (373.73,306.06) and (375.1,307.4) .. (375.1,309.05) .. controls (375.1,310.7) and (373.73,312.04) .. (372.03,312.04) .. controls (370.33,312.04) and (368.95,310.7) .. (368.95,309.05) -- cycle ;
\draw  [color={rgb, 255:red, 0; green, 0; blue, 0 }  ,draw opacity=1 ][fill={rgb, 255:red, 0; green, 0; blue, 0 }  ,fill opacity=1 ] (405.03,347.35) .. controls (405.03,345.7) and (406.4,344.36) .. (408.1,344.36) .. controls (409.8,344.36) and (411.18,345.7) .. (411.18,347.35) .. controls (411.18,349) and (409.8,350.34) .. (408.1,350.34) .. controls (406.4,350.34) and (405.03,349) .. (405.03,347.35) -- cycle ;
\draw    (414.65,96.03) -- (451.97,119.3) ;
\draw    (467.68,74.09) -- (451.97,119.3) ;
\draw    (451.97,119.3) -- (438.53,165.09) ;
\draw    (438.53,165.09) -- (453.25,222.69) ;
\draw    (453.25,222.69) -- (438.57,278.29) -- (437.89,282.52) ;
\draw    (539.74,219.69) -- (553.76,283.1) ;
\draw    (596.99,173.48) -- (644.02,192.43) ;
\draw    (618.27,134.59) -- (596.99,173.48) ;
\draw    (576.56,95.36) -- (540.66,119.45) ;
\draw    (525.95,72.09) -- (540.66,119.45) ;
\draw    (596.99,173.48) -- (554.41,164.09) ;
\draw    (540.66,119.45) -- (554.41,164.09) ;
\draw    (595.68,274.21) -- (553.76,283.1) ;
\draw    (534.53,328.07) -- (575.19,352.4) ;
\draw    (553.76,283.1) -- (534.53,328.07) ;
\draw    (452.93,328.55) -- (453.69,329.32) -- (464.53,370.67) ;
\draw    (533.26,329.02) -- (521.37,374.61) ;
\draw    (451.13,328.32) -- (451.89,329.09) -- (408.1,347.35) ;
\draw    (397.63,274.21) -- (350.37,257.8) ;
\draw    (436.95,282.13) -- (397.63,274.21) ;
\draw    (436.95,282.13) -- (452.93,327.22) ;
\draw    (396.7,273.95) -- (372.03,312.04) ;
\draw    (349.18,190.44) -- (394.08,174.92) ;
\draw    (372.75,137.91) -- (395.15,173.23) ;
\draw    (595.68,274.21) -- (619.58,311.22) ;
\draw    (595.68,274.21) -- (640.53,267.34) ;
\draw    (453.25,222.69) .. controls (463.22,208.12) and (533.93,208.65) .. (539.74,222.69) ;
\draw  [color={rgb, 255:red, 0; green, 0; blue, 0 }  ,draw opacity=1 ][fill={rgb, 255:red, 0; green, 0; blue, 0 }  ,fill opacity=1 ] (150.85,223.94) .. controls (150.85,222.32) and (152.19,221) .. (153.84,221) .. controls (155.49,221) and (156.83,222.32) .. (156.83,223.94) .. controls (156.83,225.57) and (155.49,226.89) .. (153.84,226.89) .. controls (152.19,226.89) and (150.85,225.57) .. (150.85,223.94) -- cycle ;
\draw  [color={rgb, 255:red, 0; green, 0; blue, 0 }  ,draw opacity=1 ][fill={rgb, 255:red, 0; green, 0; blue, 0 }  ,fill opacity=1 ] (108.86,223.94) .. controls (108.86,222.32) and (110.19,221) .. (111.84,221) .. controls (113.5,221) and (114.83,222.32) .. (114.83,223.94) .. controls (114.83,225.57) and (113.5,226.89) .. (111.84,226.89) .. controls (110.19,226.89) and (108.86,225.57) .. (108.86,223.94) -- cycle ;
\draw  [color={rgb, 255:red, 0; green, 0; blue, 0 }  ,draw opacity=1 ][fill={rgb, 255:red, 0; green, 0; blue, 0 }  ,fill opacity=1 ] (192.84,223.94) .. controls (192.84,222.32) and (194.18,221) .. (195.83,221) .. controls (197.48,221) and (198.82,222.32) .. (198.82,223.94) .. controls (198.82,225.57) and (197.48,226.89) .. (195.83,226.89) .. controls (194.18,226.89) and (192.84,225.57) .. (192.84,223.94) -- cycle ;
\draw  [color={rgb, 255:red, 0; green, 0; blue, 0 }  ,draw opacity=1 ][fill={rgb, 255:red, 0; green, 0; blue, 0 }  ,fill opacity=1 ] (193.74,122.22) .. controls (193.74,120.59) and (195.08,119.27) .. (196.73,119.27) .. controls (198.38,119.27) and (199.72,120.59) .. (199.72,122.22) .. controls (199.72,123.85) and (198.38,125.17) .. (196.73,125.17) .. controls (195.08,125.17) and (193.74,123.85) .. (193.74,122.22) -- cycle ;
\draw  [color={rgb, 255:red, 0; green, 0; blue, 0 }  ,draw opacity=1 ][fill={rgb, 255:red, 0; green, 0; blue, 0 }  ,fill opacity=1 ] (52.3,176.44) .. controls (52.3,174.82) and (53.64,173.5) .. (55.29,173.5) .. controls (56.94,173.5) and (58.28,174.82) .. (58.28,176.44) .. controls (58.28,178.07) and (56.94,179.39) .. (55.29,179.39) .. controls (53.64,179.39) and (52.3,178.07) .. (52.3,176.44) -- cycle ;
\draw  [color={rgb, 255:red, 0; green, 0; blue, 0 }  ,draw opacity=1 ][fill={rgb, 255:red, 0; green, 0; blue, 0 }  ,fill opacity=1 ] (247.17,274.72) .. controls (247.17,273.09) and (248.51,271.77) .. (250.16,271.77) .. controls (251.81,271.77) and (253.15,273.09) .. (253.15,274.72) .. controls (253.15,276.35) and (251.81,277.67) .. (250.16,277.67) .. controls (248.51,277.67) and (247.17,276.35) .. (247.17,274.72) -- cycle ;
\draw  [color={rgb, 255:red, 0; green, 0; blue, 0 }  ,draw opacity=1 ][fill={rgb, 255:red, 0; green, 0; blue, 0 }  ,fill opacity=1 ] (187.79,327.79) .. controls (187.79,326.16) and (189.13,324.84) .. (190.78,324.84) .. controls (192.43,324.84) and (193.76,326.16) .. (193.76,327.79) .. controls (193.76,329.42) and (192.43,330.74) .. (190.78,330.74) .. controls (189.13,330.74) and (187.79,329.42) .. (187.79,327.79) -- cycle ;
\draw  [color={rgb, 255:red, 0; green, 0; blue, 0 }  ,draw opacity=1 ][fill={rgb, 255:red, 0; green, 0; blue, 0 }  ,fill opacity=1 ] (107.62,122.07) .. controls (107.62,120.44) and (108.95,119.12) .. (110.61,119.12) .. controls (112.26,119.12) and (113.59,120.44) .. (113.59,122.07) .. controls (113.59,123.69) and (112.26,125.01) .. (110.61,125.01) .. controls (108.95,125.01) and (107.62,123.69) .. (107.62,122.07) -- cycle ;
\draw  [color={rgb, 255:red, 0; green, 0; blue, 0 }  ,draw opacity=1 ][fill={rgb, 255:red, 0; green, 0; blue, 0 }  ,fill opacity=1 ] (108.55,326.96) .. controls (108.55,325.33) and (109.89,324.01) .. (111.54,324.01) .. controls (113.19,324.01) and (114.53,325.33) .. (114.53,326.96) .. controls (114.53,328.59) and (113.19,329.91) .. (111.54,329.91) .. controls (109.89,329.91) and (108.55,328.59) .. (108.55,326.96) -- cycle ;
\draw  [color={rgb, 255:red, 0; green, 0; blue, 0 }  ,draw opacity=1 ][fill={rgb, 255:red, 0; green, 0; blue, 0 }  ,fill opacity=1 ] (206.46,283.48) .. controls (206.46,281.85) and (207.8,280.53) .. (209.45,280.53) .. controls (211.1,280.53) and (212.44,281.85) .. (212.44,283.48) .. controls (212.44,285.11) and (211.1,286.43) .. (209.45,286.43) .. controls (207.8,286.43) and (206.46,285.11) .. (206.46,283.48) -- cycle ;
\draw  [color={rgb, 255:red, 0; green, 0; blue, 0 }  ,draw opacity=1 ][fill={rgb, 255:red, 0; green, 0; blue, 0 }  ,fill opacity=1 ] (207.1,166.2) .. controls (207.1,164.57) and (208.43,163.25) .. (210.08,163.25) .. controls (211.73,163.25) and (213.07,164.57) .. (213.07,166.2) .. controls (213.07,167.83) and (211.73,169.15) .. (210.08,169.15) .. controls (208.43,169.15) and (207.1,167.83) .. (207.1,166.2) -- cycle ;
\draw  [color={rgb, 255:red, 0; green, 0; blue, 0 }  ,draw opacity=1 ][fill={rgb, 255:red, 0; green, 0; blue, 0 }  ,fill opacity=1 ] (93.95,282.91) .. controls (93.95,281.28) and (95.29,279.96) .. (96.94,279.96) .. controls (98.59,279.96) and (99.92,281.28) .. (99.92,282.91) .. controls (99.92,284.54) and (98.59,285.86) .. (96.94,285.86) .. controls (95.29,285.86) and (93.95,284.54) .. (93.95,282.91) -- cycle ;
\draw  [color={rgb, 255:red, 0; green, 0; blue, 0 }  ,draw opacity=1 ][fill={rgb, 255:red, 0; green, 0; blue, 0 }  ,fill opacity=1 ] (94.56,167.19) .. controls (94.56,165.56) and (95.9,164.24) .. (97.55,164.24) .. controls (99.2,164.24) and (100.54,165.56) .. (100.54,167.19) .. controls (100.54,168.81) and (99.2,170.13) .. (97.55,170.13) .. controls (95.9,170.13) and (94.56,168.81) .. (94.56,167.19) -- cycle ;
\draw  [color={rgb, 255:red, 0; green, 0; blue, 0 }  ,draw opacity=1 ][fill={rgb, 255:red, 0; green, 0; blue, 0 }  ,fill opacity=1 ] (248.44,175.46) .. controls (248.44,173.83) and (249.78,172.51) .. (251.43,172.51) .. controls (253.08,172.51) and (254.42,173.83) .. (254.42,175.46) .. controls (254.42,177.09) and (253.08,178.41) .. (251.43,178.41) .. controls (249.78,178.41) and (248.44,177.09) .. (248.44,175.46) -- cycle ;
\draw    (111.84,223.94) -- (195.83,223.94) ;
\draw    (210.08,169.15) -- (195.83,223.94) ;
\draw    (55.29,176.44) -- (97.55,167.19) ;
\draw  [color={rgb, 255:red, 255; green, 255; blue, 255 }  ,draw opacity=1 ][line width=4.5] [line join = round][line cap = round] (115.21,238.23) .. controls (114.79,236.48) and (113.46,234.72) .. (113.94,232.99) .. controls (114.33,231.57) and (115.28,235.63) .. (115.53,237.08) .. controls (115.99,239.69) and (113.68,234.98) .. (114.42,235.93) .. controls (114.76,236.37) and (114.97,236.91) .. (115.21,237.41) .. controls (115.41,237.8) and (114.79,236.64) .. (114.58,236.26) ;
\draw  [color={rgb, 255:red, 255; green, 255; blue, 255 }  ,draw opacity=1 ][line width=4.5] [line join = round][line cap = round] (115.85,233.31) .. controls (114.98,232.24) and (113.18,230.96) .. (113.78,229.71) ;
\draw  [color={rgb, 255:red, 255; green, 255; blue, 255 }  ,draw opacity=1 ][line width=0.75] [line join = round][line cap = round] (113.31,229.71) .. controls (112.93,228.93) and (111.78,228.79) .. (112.67,228.56) ;
\draw  [color={rgb, 255:red, 255; green, 255; blue, 255 }  ,draw opacity=1 ][line width=1.5] [line join = round][line cap = round] (194.21,235.28) .. controls (194.42,234.41) and (194.23,233.3) .. (194.85,232.66) .. controls (195.23,232.26) and (195.08,234.85) .. (195.16,234.3) .. controls (195.4,232.68) and (195.27,231.02) .. (195.32,229.38) ;
\draw  [color={rgb, 255:red, 255; green, 255; blue, 255 }  ,draw opacity=1 ][line width=1.5] [line join = round][line cap = round] (112.62,229.52) .. controls (112.62,229.14) and (112.62,228.76) .. (112.62,228.38) ;
\draw  [color={rgb, 255:red, 255; green, 255; blue, 255 }  ,draw opacity=1 ][line width=1.5] [line join = round][line cap = round] (195.59,230.02) .. controls (195.53,229.42) and (195.48,228.81) .. (195.43,228.21) ;
\draw  [color={rgb, 255:red, 0; green, 0; blue, 0 }  ,draw opacity=1 ][fill={rgb, 255:red, 0; green, 0; blue, 0 }  ,fill opacity=1 ] (294.15,194.13) .. controls (294.15,192.41) and (295.47,191.01) .. (297.1,191.01) .. controls (298.73,191.01) and (300.05,192.41) .. (300.05,194.13) .. controls (300.05,195.86) and (298.73,197.26) .. (297.1,197.26) .. controls (295.47,197.26) and (294.15,195.86) .. (294.15,194.13) -- cycle ;
\draw  [color={rgb, 255:red, 0; green, 0; blue, 0 }  ,draw opacity=1 ][fill={rgb, 255:red, 0; green, 0; blue, 0 }  ,fill opacity=1 ] (290.76,267.95) .. controls (290.76,266.22) and (292.08,264.82) .. (293.71,264.82) .. controls (295.34,264.82) and (296.66,266.22) .. (296.66,267.95) .. controls (296.66,269.68) and (295.34,271.08) .. (293.71,271.08) .. controls (292.08,271.08) and (290.76,269.68) .. (290.76,267.95) -- cycle ;
\draw  [color={rgb, 255:red, 0; green, 0; blue, 0 }  ,draw opacity=1 ][fill={rgb, 255:red, 0; green, 0; blue, 0 }  ,fill opacity=1 ] (270.38,311.19) .. controls (270.38,309.56) and (271.71,308.24) .. (273.36,308.24) .. controls (275.01,308.24) and (276.35,309.56) .. (276.35,311.19) .. controls (276.35,312.82) and (275.01,314.14) .. (273.36,314.14) .. controls (271.71,314.14) and (270.38,312.82) .. (270.38,311.19) -- cycle ;
\draw  [color={rgb, 255:red, 0; green, 0; blue, 0 }  ,draw opacity=1 ][fill={rgb, 255:red, 0; green, 0; blue, 0 }  ,fill opacity=1 ] (227.27,351.77) .. controls (227.27,350.14) and (228.61,348.82) .. (230.26,348.82) .. controls (231.91,348.82) and (233.25,350.14) .. (233.25,351.77) .. controls (233.25,353.4) and (231.91,354.72) .. (230.26,354.72) .. controls (228.61,354.72) and (227.27,353.4) .. (227.27,351.77) -- cycle ;
\draw  [color={rgb, 255:red, 0; green, 0; blue, 0 }  ,draw opacity=1 ][fill={rgb, 255:red, 0; green, 0; blue, 0 }  ,fill opacity=1 ] (175.01,373.65) .. controls (175.01,372.02) and (176.35,370.7) .. (178,370.7) .. controls (179.65,370.7) and (180.99,372.02) .. (180.99,373.65) .. controls (180.99,375.28) and (179.65,376.6) .. (178,376.6) .. controls (176.35,376.6) and (175.01,375.28) .. (175.01,373.65) -- cycle ;
\draw  [color={rgb, 255:red, 0; green, 0; blue, 0 }  ,draw opacity=1 ][fill={rgb, 255:red, 0; green, 0; blue, 0 }  ,fill opacity=1 ] (120.71,370.77) .. controls (120.71,369.14) and (122.05,367.82) .. (123.7,367.82) .. controls (125.35,367.82) and (126.69,369.14) .. (126.69,370.77) .. controls (126.69,372.4) and (125.35,373.72) .. (123.7,373.72) .. controls (122.05,373.72) and (120.71,372.4) .. (120.71,370.77) -- cycle ;
\draw  [color={rgb, 255:red, 0; green, 0; blue, 0 }  ,draw opacity=1 ][fill={rgb, 255:red, 0; green, 0; blue, 0 }  ,fill opacity=1 ] (122.88,77.51) .. controls (122.88,75.89) and (124.21,74.57) .. (125.86,74.57) .. controls (127.51,74.57) and (128.85,75.89) .. (128.85,77.51) .. controls (128.85,79.14) and (127.51,80.46) .. (125.86,80.46) .. controls (124.21,80.46) and (122.88,79.14) .. (122.88,77.51) -- cycle ;
\draw  [color={rgb, 255:red, 0; green, 0; blue, 0 }  ,draw opacity=1 ][fill={rgb, 255:red, 0; green, 0; blue, 0 }  ,fill opacity=1 ] (179.46,75.55) .. controls (179.46,73.92) and (180.8,72.6) .. (182.45,72.6) .. controls (184.1,72.6) and (185.44,73.92) .. (185.44,75.55) .. controls (185.44,77.18) and (184.1,78.5) .. (182.45,78.5) .. controls (180.8,78.5) and (179.46,77.18) .. (179.46,75.55) -- cycle ;
\draw  [color={rgb, 255:red, 0; green, 0; blue, 0 }  ,draw opacity=1 ][fill={rgb, 255:red, 0; green, 0; blue, 0 }  ,fill opacity=1 ] (71.38,99.13) .. controls (71.38,97.51) and (72.72,96.19) .. (74.37,96.19) .. controls (76.02,96.19) and (77.35,97.51) .. (77.35,99.13) .. controls (77.35,100.76) and (76.02,102.08) .. (74.37,102.08) .. controls (72.72,102.08) and (71.38,100.76) .. (71.38,99.13) -- cycle ;
\draw  [color={rgb, 255:red, 0; green, 0; blue, 0 }  ,draw opacity=1 ][fill={rgb, 255:red, 0; green, 0; blue, 0 }  ,fill opacity=1 ] (269.1,137.13) .. controls (269.1,135.51) and (270.44,134.19) .. (272.09,134.19) .. controls (273.74,134.19) and (275.08,135.51) .. (275.08,137.13) .. controls (275.08,138.76) and (273.74,140.08) .. (272.09,140.08) .. controls (270.44,140.08) and (269.1,138.76) .. (269.1,137.13) -- cycle ;
\draw  [color={rgb, 255:red, 0; green, 0; blue, 0 }  ,draw opacity=1 ][fill={rgb, 255:red, 0; green, 0; blue, 0 }  ,fill opacity=1 ] (231.59,98.48) .. controls (231.59,96.85) and (232.93,95.53) .. (234.58,95.53) .. controls (236.23,95.53) and (237.57,96.85) .. (237.57,98.48) .. controls (237.57,100.11) and (236.23,101.43) .. (234.58,101.43) .. controls (232.93,101.43) and (231.59,100.11) .. (231.59,98.48) -- cycle ;
\draw  [color={rgb, 255:red, 0; green, 0; blue, 0 }  ,draw opacity=1 ][fill={rgb, 255:red, 0; green, 0; blue, 0 }  ,fill opacity=1 ] (54.85,274.72) .. controls (54.85,273.09) and (56.19,271.77) .. (57.84,271.77) .. controls (59.49,271.77) and (60.82,273.09) .. (60.82,274.72) .. controls (60.82,276.35) and (59.49,277.67) .. (57.84,277.67) .. controls (56.19,277.67) and (54.85,276.35) .. (54.85,274.72) -- cycle ;
\draw  [color={rgb, 255:red, 0; green, 0; blue, 0 }  ,draw opacity=1 ][fill={rgb, 255:red, 0; green, 0; blue, 0 }  ,fill opacity=1 ] (30.69,140.41) .. controls (30.69,138.78) and (32.03,137.46) .. (33.68,137.46) .. controls (35.33,137.46) and (36.66,138.78) .. (36.66,140.41) .. controls (36.66,142.04) and (35.33,143.36) .. (33.68,143.36) .. controls (32.03,143.36) and (30.69,142.04) .. (30.69,140.41) -- cycle ;
\draw  [color={rgb, 255:red, 0; green, 0; blue, 0 }  ,draw opacity=1 ][fill={rgb, 255:red, 0; green, 0; blue, 0 }  ,fill opacity=1 ] (7.8,192.17) .. controls (7.8,190.54) and (9.14,189.22) .. (10.79,189.22) .. controls (12.44,189.22) and (13.78,190.54) .. (13.78,192.17) .. controls (13.78,193.8) and (12.44,195.12) .. (10.79,195.12) .. controls (9.14,195.12) and (7.8,193.8) .. (7.8,192.17) -- cycle ;
\draw  [color={rgb, 255:red, 0; green, 0; blue, 0 }  ,draw opacity=1 ][fill={rgb, 255:red, 0; green, 0; blue, 0 }  ,fill opacity=1 ] (10.34,258.34) .. controls (10.34,256.71) and (11.68,255.39) .. (13.33,255.39) .. controls (14.98,255.39) and (16.32,256.71) .. (16.32,258.34) .. controls (16.32,259.97) and (14.98,261.29) .. (13.33,261.29) .. controls (11.68,261.29) and (10.34,259.97) .. (10.34,258.34) -- cycle ;
\draw  [color={rgb, 255:red, 0; green, 0; blue, 0 }  ,draw opacity=1 ][fill={rgb, 255:red, 0; green, 0; blue, 0 }  ,fill opacity=1 ] (29.99,309.05) .. controls (29.99,307.42) and (31.33,306.1) .. (32.98,306.1) .. controls (34.63,306.1) and (35.96,307.42) .. (35.96,309.05) .. controls (35.96,310.68) and (34.63,312) .. (32.98,312) .. controls (31.33,312) and (29.99,310.68) .. (29.99,309.05) -- cycle ;
\draw  [color={rgb, 255:red, 0; green, 0; blue, 0 }  ,draw opacity=1 ][fill={rgb, 255:red, 0; green, 0; blue, 0 }  ,fill opacity=1 ] (65.02,346.79) .. controls (65.02,345.16) and (66.36,343.84) .. (68.01,343.84) .. controls (69.66,343.84) and (71,345.16) .. (71,346.79) .. controls (71,348.42) and (69.66,349.74) .. (68.01,349.74) .. controls (66.36,349.74) and (65.02,348.42) .. (65.02,346.79) -- cycle ;
\draw    (74.37,99.13) -- (110.61,122.07) ;
\draw    (125.86,77.51) -- (110.61,122.07) ;
\draw    (110.61,122.07) -- (97.55,167.19) ;
\draw    (97.55,167.19) -- (111.84,223.94) ;
\draw    (111.84,223.94) -- (97.59,278.74) -- (96.94,282.91) ;
\draw    (195.83,221) -- (209.45,283.48) ;
\draw    (251.43,175.46) -- (297.1,194.13) ;
\draw    (272.09,137.13) -- (251.43,175.46) ;
\draw    (231.59,98.48) -- (196.73,122.22) ;
\draw    (182.45,75.55) -- (196.73,122.22) ;
\draw    (251.43,175.46) -- (210.08,166.2) ;
\draw    (196.73,122.22) -- (210.08,166.2) ;
\draw    (250.16,274.72) -- (209.45,283.48) ;
\draw    (190.78,327.79) -- (230.26,351.77) ;
\draw    (209.45,283.48) -- (190.78,327.79) ;
\draw    (111.54,328.27) -- (112.27,329.03) -- (122.8,369.78) ;
\draw    (189.55,328.73) -- (178,373.65) ;
\draw    (109.79,328.04) -- (110.53,328.79) -- (68.01,346.79) ;
\draw    (57.84,274.72) -- (11.95,258.55) ;
\draw    (96.02,282.52) -- (57.84,274.72) ;
\draw    (96.02,282.52) -- (111.54,326.96) ;
\draw    (56.93,274.46) -- (32.98,312) ;
\draw    (10.79,192.17) -- (54.39,176.88) ;
\draw    (33.68,140.41) -- (55.43,175.21) ;
\draw    (250.16,274.72) -- (273.36,311.19) ;
\draw    (250.16,274.72) -- (293.71,267.95) ;

\draw (85,410) node [anchor=north west][inner sep=0.75pt]   [align=left] {\hspace{-0.3cm}Rooted binary tree};
\draw (444,409) node [anchor=north west][inner sep=0.75pt]   [align=left] {\hspace{0.7cm}$\wh \Ga_2$};

\end{tikzpicture}
\end{figure}

In the picture below you can see part of the rooted binary tree and of the associated dyadic spider's web $\wh\Ga_2$.  
There are two natural metrics on a spider's web $\wh\Ga$: the tree metric~$d_{\mrT}$ and the graph metric $d_{\wh\Ga}$.  Obviously
$d_\mrT \geq d_{\wh\Ga}$.  Notice that $d_\mrT$ and~$d_{\wh\Ga}$ may be nonequivalent metrics.  For instance, consider the points 
$x_n := 0 \underbrace{1 \cdots 1}_{n \mathrm{times}}$ and $y_n := 1 \overbrace{0 \cdots 0}^{n \mathrm{times}}$ in $\wh\Ga_2$.  Then 
$$
d_{\mrT} (x_n,y_n) 
= 2n+2 
\qquad \hbox{and}\qquad
d_{\wh\Ga} (x_n,y_n) 
= 1. 
$$

\subsection{Metric graphs embedded in length spaces}
In the sequel we shall encounter connected graphs $\Ga$ (without self-loops and multi-edges) 
whose set of vertices is a discrete subset of a metric measure space $(X,d,\mu)$.  In particular, we shall be concerned with the case where $X$ is a complete 
length space with length function $\ell_X$, $d$ is the associated strictly intrinsic metric \cite[Definition~8.4.1]{BBI}, and~$\mu$ is a locally doubling 
Borel measure on $(X,d)$.  For each pair of neighbours $x$ and $y$ in $\Ga$, consider a geodesic $\ga_{x,y}$  in the length space $X$ connecting them.

We denote by $\wt \Ga$ the \textit{metric graph} defined as follows.  The vertices of $\wt\Ga$ agree with those of~$\Ga$; the edge in $\wt\Ga$ connecting two 
neighbours $x$ and $y$ in~$\Ga$ is the geodesic segment $\ga_{x,y}$ chosen above.   
If~$z$ and $w$ are points in $\ga_{x,y}$, then we set
\begin{equation}\label{d: Ga wtGA}
d_{\wt \Ga} (z,w)
:= d(z,w). 
\end{equation}
In particular, $d_{\wt \Ga} (x,y) = \ell_X(\ga_{x,y})$.
Now, if $v$ and $w$ are any two points in~$\wt\Ga$ (thus, $v$ and $w$ may be either vertices of $\Ga$ or points in the interior of geodesic segments), then
we define $d_{\wt\Ga}(v,w)$ to be the infimum of the lengths of all paths in~$\wt\Ga$ joining $v$ and $w$.  
Clearly $d_{\wt\Ga}$ is a distance on $\wt\Ga$.  

\begin{definition} \label{def: bounded geometry graph}
We say that the metric graph $\wt\Ga$ associated to the graph~$\Ga$ has \textit{bounded geometry} if~$\Ga$ has bounded valence and
$$
	0 < \inf\,\{\ell_X(\si): \hbox{$\si$ edge in $\wt\Ga$}\} 
	\quad\hbox{and}\quad 
	\sup\,\{\ell_X(\si): \hbox{$\si$ edge in $\wt\Ga$}\} 
	<\infty.
$$
\end{definition}

Next we describe the \textit{metric graph} $\Ga_0$ associated to the connected graph~$\Ga$.  Loosely speaking, $\Ga_0$, as a set, agrees with $\wt\Ga$,
but each edge of $\Ga_0$ (the image of a geodesic segment in $X$) is now declared to be isometric to the interval $[0,1]$.
More precisely, for each pair $x$ and $y$ of neighbours in~$\Ga$, consider the geodesic $\ga_{x,y}$ in $X$ joining $x$ and~$y$ (which is an ``edge" in $\wt\Ga$), 
and its arc-length parametrisation $s:[0,\ell_X(\ga_{x,y})] \to \ga_{x,y}$ such that $s(0) = x$ and $s\big(\ell_X(\ga_{x,y})\big) = y$.  Define
$$
\iota(t)  
:= s\big(t\cdot \ell_X(\ga_{x,y})\big)
\quant t \in [0,1], 
$$
and set
\begin{equation} \label{f: iota}
d_{\Ga_0} \big(\iota(t_1),\iota(t_2)\big)
:= \mod{t_1-t_2}
\quant t_1,t_2\in [0,1]. 
\end{equation}
There is a natural notion of admissible paths in $\Ga_0$ and a corresponding notion of length.  Now $d_{\Ga_0}$ is just the metric associated to such 
length structure.  

\begin{remark} \label{rem: induced distance}
It is straightforward to check that if $x$ and $y$ are neighbours in~$\Ga$, then 
\begin{equation} \label{f: induced distance} 
	1
	= d_\Ga(x,y)
	= d_{\Ga_0}(x,y)
	= \frac{1}{\ell_X(\ga_{x,y})}\, \, d_{\wt\Ga}(x,y).
\end{equation}
If $\wt\Ga$ has bounded geometry, then there exist positive constants $A_1$ and $A_2$ such that 
\begin{equation}\label{eq: leng ngb}
A_1\leq \ell_X(\ga_{x,y}) \leq A_2,
\end{equation}
	 so that 
\begin{equation} \label{f: induced distance II} 
	A_1 \, d_\Ga(x,y) \leq  d_{\wt\Ga}(x,y) \leq A_2 \, d_\Ga(x,y) \quant x,y \in \Ga.  
\end{equation}
\end{remark}
		
\begin{proposition} \label{p: Ga Ga_0 wtGa}
	Suppose that $\Ga$, $\Ga_0$ and $\wt\Ga$ are as described above.  If~$\wt\Ga$ has bounded geometry
	and $(\wt\Ga, d_{\wt\Ga})$ is a $\de$-hyperbolic length space for some nonnegative number $\de$, then the following hold:
	\begin{enumerate} 
		\item[\itemno1] $(\wt\Ga, d_{\wt\Ga})$ and $(\Ga_0,d_{\Ga_0})$ are bi-Lipschitz equivalent. More precisely
 \begin{align*}
      A_1\,d_{\Ga_0} (v,w) 
	\leq d_{\wt\Ga} (v,w)  
	\leq A_2 \, d_{\Ga_0} (v,w) \quant v,w \in \wt \Ga, 
 \end{align*} 
			where $A_1$ and $A_2$ are as  in \eqref{eq: leng ngb};
		\item[\itemno2]
			$(\Ga_0,d_{\Ga_0})$ is a $\de'$-hyperbolic length space for some nonnegative number~$\de'$; 
		\item[\itemno3]
			$(\Ga,d_\Ga)$ is a Gromov hyperbolic metric space (the four points condition holds). 
	\end{enumerate}
\end{proposition}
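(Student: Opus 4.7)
The plan is to prove (i) by a direct length comparison, deduce (ii) from (i) via Theorem~\ref{t: rqi hyp}, and obtain (iii) by restricting the four points condition from~$\Ga_0$ to the subset of vertices~$\Ga$. The key observation is that, as sets, $\wt\Ga$ and $\Ga_0$ coincide, and the only difference between the two metrics is the parametrisation of each edge.

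For (i), I would argue edge-by-edge. On a single edge $\ga_{x,y}$ both metrics are explicit: by~\eqref{d: Ga wtGA} one has $d_{\wt\Ga}(\iota(t_1),\iota(t_2)) = |t_1-t_2|\, \ell_X(\ga_{x,y})$, while~\eqref{f: iota} gives $d_{\Ga_0}(\iota(t_1),\iota(t_2))= |t_1-t_2|$, so the two distances on that edge differ exactly by the factor $\ell_X(\ga_{x,y}) \in [A_1,A_2]$ granted by~\eqref{eq: leng ngb}. Any admissible path joining two points $v$ and $w$ decomposes into finitely many edge pieces (any backtracking within a single edge may be discarded without increasing the length); summing piece-by-piece shows that its $d_{\wt\Ga}$-length lies between $A_1$ and $A_2$ times its $d_{\Ga_0}$-length. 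Taking infima over such paths yields the bi-Lipschitz inequality stated in (i).

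For (ii), bounded geometry ensures that both $(\wt\Ga,d_{\wt\Ga})$ and $(\Ga_0,d_{\Ga_0})$ are complete length spaces: bounded valence together with a uniform lower bound on edge length implies that every Cauchy sequence is eventually contained in a finite subgraph. Part (i) shows that the identity map $(\Ga_0,d_{\Ga_0}) \to (\wt\Ga,d_{\wt\Ga})$ is a surjective $(\la,0)$-rough isometry with $\la := \max(1/A_1,A_2) \geq 1$. Applying Theorem~\ref{t: rqi hyp} with this map and with the given $\de$-hyperbolicity of $\wt\Ga$ delivers (ii) with some $\de'$ depending only on~$\de$, $A_1$ and~$A_2$. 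The main technical point I anticipate here is the completeness verification: one must use bounded valence together with the uniform lower bound $A_1$ to rule out Cauchy sequences whose limits escape through accumulations of short edges.

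For (iii), I would show that the vertex set $(\Ga,d_\Ga)$ embeds isometrically into $(\Ga_0,d_{\Ga_0})$. Indeed, any $d_{\Ga_0}$-path between two vertices can be replaced, without increasing its length, by one that traverses only complete edges; the length of such a path equals the number of edges it traverses, hence coincides with the $d_\Ga$-distance. The four points condition~\eqref{f: Gromov}, valid in $\Ga_0$ by (ii), involves only quadruples of points and therefore restricts to the same condition, with the same constant~$\de'$, on the subset $\Ga$, establishing Gromov hyperbolicity of $(\Ga,d_\Ga)$.
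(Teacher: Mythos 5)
Your proposal is correct and follows essentially the same route as the paper: an edge-by-edge length comparison for (i), transfer of hyperbolicity along a quasi-isometry for (ii), and restriction of the four-point condition from $\Ga_0$ to its vertex set for (iii). The one difference is in (ii): the paper invokes \cite[Theorem~8.4.16]{BBI} directly, whereas you appeal to the paper's own Theorem~\ref{t: rqi hyp} and therefore need to check completeness of $(\Ga_0,d_{\Ga_0})$ — a point you correctly flag and correctly settle via bounded valence and the uniform (unit) edge length in $\Ga_0$; both citations encode the same quasi-isometry invariance, so this is a cosmetic substitution rather than a different argument.
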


\begin{proof}
	First we prove \rmi.  Clearly, if $x$ and $y$ are neighbours in $\Ga$, then 
	$$
	d_{\Ga_0}(x,y) 
	= 1 
	= \frac{1}{\ell_X(\ga_{x,y})} \, d_{\wt\Ga} (x,y).
	$$  
	If $\ga_{x,y}$ is the edge in $\wt\Ga$ joining $x$ and $y$, denote by $s:[0,\ell_X(\ga_{x,y})] \to \ga_{x,y}$ is its arc-length parametrisation 
	(with $s(0) = x$ and $s\big(\ell_X(\ga_{x,y})\big) = y$).  Write~$L$ in place of $\ell_X(\ga_{x,y})$ for short.  Then
	for each pair of numbers $t_1$ and $t_2$ in $[0,L]$, we have that 
	$$
	\begin{aligned}
	d_{\Ga_0} \big(s(t_1),s(t_2)\big)
		& =  d_{\Ga_0} \Big(s\Big(\frac{t_1}{L}\cdot L\Big),s\Big(\frac{t_2}{L}\cdot L \Big)\Big) \\
		& =  \frac{1}{L} \,\, \mod{t_1-t_2} \\
		& =  \frac{1}{L} \,\, d_{\wt\Ga} \big(s(t_1),s(t_2)\big); 
	\end{aligned}
	$$
	we have used \eqref{f: iota} in the second to last equality above. 

	Since, by assumption, $\wt\Ga$ has bounded geometry, \eqref{eq: leng ngb} holds for every pair $x$ and $y$ of neighbours.
	As a consequence, we see that for each pair of points~$v$ and $w$ belonging to an edge in $\Ga_0$ the following estimate holds
	$$
	A_1 \, d_{\Ga_0} (v,w) 
	\leq d_{\wt\Ga} (v,w)  
	\leq A_2 \, d_{\Ga_0} (v,w).  
	$$
	It is straightforward to check that this estimate extends to all pairs of points~$v$ and $w$ in $\wt\Ga$, 
	thereby concluding the proof of \rmi.
 
	Observe that \rmii\  follows from \rmi\ and \cite[Theorem~8.4.16]{BBI}.

	 Finally, we prove \rmiii.  Since $(\Ga_0,d_{\Ga_0})$ is a $\de'$-hyperbolic length space, 
	in particular a $\de'$-hyperbolic space, the four point 
	condition \eqref{f: Gromov} holds (with $\de'$ in place of $\de$).  Since the metric graph $d_\Ga$ on $\Ga$ agrees with the restriction 
	of $d_{\Ga_0}$ to $\Ga$, the four points condition holds in $(\Ga,d_\Ga)$.  Thus, $(\Ga,d_\Ga)$ is a $\de'$-hyperbolic metric space, as required. 
\end{proof}

\noindent
\begin{caveat}  \label{cav: metric graphs}
	\rm{In the sequel we shall consistently use the notation concerning graphs introduced in this section.  In particular, given a 
	metric space $(X,d)$, where $d$ is a strictly intrinsic metric, $\Ga$ {will always denote} an \textit{abstract graph}, whose vertices are points of $X$, 
	endowed with the graph distance $d_\Ga$.  We associate to $\Ga$ the \textit{metric graphs} $(\wt \Ga, d_{\wt \Ga})$ and $(\Ga_0,d_{\Ga_0})$ as follows:
	\begin{enumerate} 
	\item[\itemno1] 
		{$\wt\Ga$} is obtained from $\Ga$ by connecting any pair $x$ and $y$ of neighbours in $\Ga$ by 
		a geodesic $\ga_{x,y}$ in $X$ joining $x$ and $y$, and declaring that $d_{\wt\Ga}(z,w) = d(z,w)$ for every pair of points 
		$z$ and $w$ in $\ga_{x,y}$;
	\item[\itemno2] 
		$\Ga_0$ agrees with $\wt\Ga$ as a set, 
		but $d_{\Ga_0}(z,w) = d(z,w)/\ell_X(\ga_{x,y})$, where $x$, $y$, $z$ and $w$ are as in \rmi.  In particular, $d_{\Ga_0}(x,y) = 1$.
	\end{enumerate}
	\noindent
	The symbol $\wh\Ga$ will always denote a spider's web.}
\end{caveat}

\begin{notation} \label{nota: rfloor}
	\rm{Suppose that $\Ga$ and $\Ga_0$ are as above, and denote by $o$ a distinguished point in $\Ga$.  Assume that $x$ belongs to $\Ga_0 \setminus \Ga$ 
	and that $\ga$ is a geodesic in~$\Ga_0$ starting at $o$ and containing $x$.  Denote by $\floor x$ the vertex on the segment $[ox] \subset \ga$ closest to $x$.   
	Observe that 
	\begin{equation} \label{f: pavimento e soffitto}
	d_{\Ga_0}(\floor x, o) \le d_{\Ga_0}(x, o) \leq d_{\Ga_0}(\floor x, o)+1; 
	\end{equation}
	If $x$ is in $\Ga$, then we set $x :=\lfloor x \rfloor$.  The definition of $\floor x$
	depends on~$o$ and the choice of the geodesic $\ga$; in order to avoid cumbersome formulae we shall suppress this dependence in our notation.} 
\end{notation}

\section[Spider webs]{More on spider's webs} \label{s: Spider webs}

Suppose that $\wh\Ga$ is a spider's web with metric $d_{\wh\Ga}$.
We say that a geodesic~$\ga$ in $\wh\Ga$ is \textit{ascending} if it 
is of the form $[x,p(x),\ldots,p^n(x)]$, where $x$ is a vertex and $n\leq h(x)$.  Similarly  we say that a geodesic~$\ga$ in $\wh\Ga$ is \textit{descending} if it 
is of the form $[y_0,y_1,\ldots,y_n]$, where $y_j = p(y_{j+1})$ for every $j$ in $\{0,\ldots,n-1\}$.  Finally,
we say that a geodesic $\ga$ connecting two points belonging to the same level $\Si_k$ (see \eqref{f: Sik} for the notation) is \textit{horizontal} if every vertex in $\ga$
belongs to $\Si_k$.  

Our strategy to prove $L^p$ bounds for $\cM_\infty$ on spider's webs will require estimates of the volume of balls in $\wh\Ga$
with respect to the graph distance $d_{\wh\Ga}$.  As a preliminary step, we describe some of the geodesics in $\wh\Ga$.
Interestingly, if the metric space $(\wh\Ga,d_{\wh\Ga})$ is Gromov hyperbolic, then it turns out that such geodesics have a form similar to that 
of the geodesics in the hyperbolic plane.  

\begin{definition} \label{f: standard}
	Suppose that $(\wh\Ga,d_{\wh\Ga})$ is a spider's web.  We say that a geodesic $\ga$ in $\wh\Ga$ is \textit{standard} if it is the union  
	of an ascending geodesic, a horizontal geodesic and a descending geodesic.  Each of these three geodesic components may  be reduced to a point.
\end{definition}

\begin{proposition} \label{p: geodesics}
	Suppose that $(\wh\Ga,d_{\wh\Ga})$ is a spider's web.
	The following hold:
	\begin{enumerate}
		\item[\itemno1]
			for each pair of vertices~$x$ and $y$ in~$\wh\Ga$ there exists a standard geodesic joining them;
		\item[\itemno2] 
			if $(\wh\Ga,d_{\wh\Ga})$ is $\de$-hyperbolic for some nonnegative number~$\de$,  
			then the length of the horizontal part of any standard geodesic is at most~$4\de+~1$. 
	\end{enumerate}
\end{proposition}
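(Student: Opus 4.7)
The plan is to handle (i) by singling out a specific geodesic from $x$ to $y$ (the one minimising the sum of heights of the vertices it visits) and showing it must be standard, and to handle (ii) by applying the four points condition to three collinear points on the horizontal part together with the root $o$.

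For (i), I label each edge of a geodesic $\gamma = [v_0, \ldots, v_N]$ according to its effect on the tree level: write $D$ when $v_{i+1} = p(v_i)$ (height decreases), $U$ when $v_i = p(v_{i+1})$ (height increases), and $F$ when $v_i \sim v_{i+1}$ is a horizontal edge. A geodesic is standard precisely when its symbol sequence has the form $D^a F^b U^c$, so the task reduces to producing a geodesic whose sequence avoids each of the substrings $UD$, $UF$, and $FD$. No geodesic contains $UD$: such a pattern at position $i$ forces $v_{i-1} = p(v_i) = v_{i+1}$, contradicting injectivity. The other two patterns are handled by local swaps based on axiom \rmii\ of Definition~\ref{def: spider web}. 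If $\gamma$ contains $FD$ at positions $(i-1,i,i+1)$, axiom \rmii\ applied to the horizontal edge $v_{i-1} \sim v_i$ forces either $p(v_{i-1}) = p(v_i) = v_{i+1}$ (which yields a shortcut $v_{i-1} \sim v_{i+1}$, contradicting geodesicity) or $p(v_{i-1}) \sim v_{i+1}$ in $\wh\Ga$, in which case substituting $p(v_{i-1})$ for $v_i$ produces a geodesic of the same length whose moves at these positions become $DF$ and in which $H(\gamma) := \sum_i h(v_i)$ has strictly decreased. A symmetric move turns $UF$ into $FU$, again strictly decreasing $H$. Choosing $\gamma^*$ to minimise the nonnegative integer-valued functional $H$ over geodesics from $x$ to $y$, the minimiser cannot contain $UD$, $UF$ or $FD$; hence its move sequence is of the form $D^a F^b U^c$, and $\gamma^*$ is a standard geodesic joining $x$ and $y$.

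For (ii), let $\gamma$ be a standard geodesic with horizontal part of length $b$ and horizontal endpoints $x^* = v_a$, $y^* = v_{a+b}$, both at level $\ell$. Because every edge of $\wh\Ga$ changes the level by at most one, $d_{\wh\Ga}(o, v) = h(v)$ for every vertex $v$; in particular $d(o, x^*) = d(o, y^*) = \ell$ and $d(o, m) = \ell$ for any vertex $m$ on the horizontal part. If $m$ is at horizontal position $s$ then $d(x^*, m) = s$ and $d(m, y^*) = b - s$ (the horizontal part being a sub-geodesic), and a direct computation of Gromov products based at $o$ gives
\begin{equation*}
(x^*, y^*)_o = \ell - b/2, \qquad (x^*, m)_o = \ell - s/2, \qquad (m, y^*)_o = \ell - (b-s)/2.
\end{equation*}
Substituting into the four points condition $(x^*, y^*)_o \geq \min\bigl((x^*, m)_o, (m, y^*)_o\bigr) - \de$ and rearranging yields $\max(s, b - s) \geq b - 2\de$. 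Taking the integer $s = \lfloor b/2 \rfloor$ forces $\max(s, b-s) = \lceil b/2 \rceil \leq (b+1)/2$, whence $b \leq 4\de + 1$.

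The main obstacle is the combinatorics of part (i): isolating the exact set of forbidden local patterns for a geodesic, carefully ruling out the ``shortening'' horn of the dichotomy in axiom \rmii\ for each swap, and then showing that a minimiser of $H$ cannot exhibit any remaining non-standard pattern. Once this framework is in place, part (ii) is essentially a one-line consequence of the four points condition.
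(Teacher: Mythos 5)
Your proof is correct; part~\rmi\ is a genuinely different argument, while part~\rmii\ is a minor but neat variation. In~\rmi\ you encode each edge of a geodesic as $D$ (towards the root), $F$ (flat), or $U$ (away from the root), observe that standardness is equivalent to the move-word having the form $D^aF^bU^c$ (i.e.\ avoiding the factors $UD$, $UF$, $FD$), and argue that $UD$ is impossible for any path without repeated vertices, while axiom~\rmii\ of Definition~\ref{def: spider web} permits a length-preserving local surgery on any $FD$ or $UF$ that strictly decreases $H(\ga)=\sum_i h(v_i)$; the $H$-minimising geodesic is therefore standard. The paper's proof instead fixes a single geodesic $\ga_0$, partitions its vertices into the level-changing set $E_1$ and the flat set $E_2$, projects $E_2$ onto the lowest level $n$ that $\ga_0$ attains (the same axiom guarantees consecutive projections are adjacent or equal), and assembles an ascending/horizontal/descending path of length at most $\ell(\ga_0)$. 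Your argument is local and extremal; the paper's is global, done in one projection-and-count. For~\rmii\ both proofs apply the four-points condition with a vertex near the midpoint of the horizontal segment, but you base the Gromov products at the root $o$, using the identity $d_{\wh\Ga}(o,v)=h(v)$, whereas the paper uses the midpoint $p$ as base point and brings in the confluent $w=x\wedge y$. Your choice is slightly cleaner: it sidesteps the auxiliary point $w$ and the paper's assertion that $d_{\wh\Ga}(p,w)=d_{\wh\Ga}(w,\wt x)=d_{\wh\Ga}(w,\wt y)$, where only ``$\ge$'' for the first quantity is actually needed (and is what is readily available). Both routes land on the bound $4\de+1$.
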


\begin{proof}
	First we prove \rmi.  Suppose that $\ga_0$, of the form $[x,x_1,\ldots,x_{\ell-1},y]$, is a shortest path joining $x$ and $y$.
	For convenience, set $x_0 := x$ and $x_\ell := y$.  Each point in $\ga_0$ belongs to one of the sets $E_1$ and $E_2$, defined by
\begin{align*}
	E_1 
	&:= \big\{x_j \in \ga_{0}\setminus\{y\}: h(x_j) \neq h(x_{j+1}) \big\}, \\ 
	E_2 
	&:= \big\{x_j \in \ga_{0}\setminus\{y\}: h(x_j) = h(x_{j+1}) \} \cup \{y\big\}.  
\end{align*}
	Observe that $\ell(\ga_0) = |E_1|+|E_2|-1.$ 

	We shall prove that there exists a path $\ga$ of the form described in the statement of the proposition such that $\ell(\ga) \leq \ell(\ga_0)$.  

	Denote by $n$ the smallest nonnegative integer $k$ such that $\Si_k$ (see \eqref{f: Sik} for the notation) has nonempty intersection with~$\ga_0$.  
	Then $h(v)\geq n$ for every~$v$ in $\ga_0$.

	Since $\ga_0$ starts at $x$ and has nonempty intersection with $\Si_n$, it must ``move towards $o$" at least $h(x)-n$ steps; here 
	$o$ denotes the root of the tree $\mrT$ associated to $\wh\Ga$.  
	Similarly, since $\ga_0$ ends at $y$, after intersecting $\Si_n$, it must ``move away from $o$" at least $h(y)-n$ steps.  Therefore 
	\begin{equation} \label{f: ga0 I}
	\mod{E_1}\geq h(x)+h(y)-2n.
	\end{equation}
	Now, given a vertex $v$ in $\ga_0$, the point $p^{h(v)-n}(v)$ belongs to $\Si_n$, it is denoted by $\pi(v)$, and it is called the \textit{projection} of $v$ onto $\Si_n$.
	
	Note the following two elementary facts:
	\begin{enumerate}
		\item[(a)] 
			if $x_j$ belongs to $E_1$, then $\pi(x_j) = \pi(x_{j+1})$, for $x_j = p(x_{j+1})$ if $h(x_j) > h(x_{j+1})$ and $p(x_j) = x_{j+1}$ 
			if $h(x_j) < h(x_{j+1})$;
		\item[(b)] 
			if $x_j$ and $x_k$ are two consecutive elements in $E_2$ (hence $j<k$), then one of the following cases occurs:
			{\begin{enumerate}
				\item[(b1)] 
					$x_k$ agrees with $x_{j+1}$ (i.e. $x_j$ and $x_k$ are consecutive points in $\ga_0$) and therefore $h(x_j)=h(x_k)$; 
				\item[(b2)] 
					$x_k$ is a successor of $x_{j+1}$; 
				\item[(b3)] 
					$x_k$ is a predecessor of $x_{j+1}$. 
			\end{enumerate}}
			\noindent
			Since $\wh \Ga$ is a spider's web, $\pi(x_j)$ and $\pi(x_k)$ either agree or are neighbours, so that their distance in $\wh \Ga$ is at most $1$.  
	\end{enumerate} 
	Now, (a) implies that $\pi(\ga_0)$ agrees with $\pi(E_2)$.  Denote by $z_1,\ldots, z_p$ the points in $E_2$, and consider 
	$\pi(z_1), \ldots, \pi(z_p)$, which is an enumeration of the points of $\pi(E_2)$.  We single out an ordered subset 
	$\{\xi_1,\ldots,\xi_q\}$ of $\{\pi(z_1), \ldots, \pi(z_p)\}$, as follows.  Set $\xi_1 := \pi(z_1)$, and $\xi_2 := \pi(z_j)$, where $j$ is the first integer 
	greater than $1$ such that $\pi(z_j)$ is distinct from $\xi_1$.  Notice that $\xi_1$ and $\xi_2$ are neighbours, by (b) above.
	Then proceed iteratively.

	Clearly $q\leq  |\pi(E_2)|$.  Furthermore $\wt \ga_0 := [\xi_1,\ldots,\xi_q]$ is a path in $\wh\Ga$ contained in $\Si_n$ connecting $\pi(x)$ and $\pi(y)$.  Note that 
	\begin{equation} \label{f: ga0 II}
	\ell(\wt\ga_0) 
	\leq  |\pi(E_2)|-1 
	\leq |E_2|-1.
	\end{equation}
	Now denote by $\ga$ the path joining $x$ and $y$ consisting of the ascending geodesic $[x,p(x),\ldots,\pi(x)]$, 
	the horizontal path $\wt\ga_0$ and the descending geodesic $[\pi(y),\ldots,p(y),y]$.  Clearly \eqref{f: ga0 I} and \eqref{f: ga0 II} imply that
	$$
	\ell(\ga) 
	=    h(x)+h(y)-2n  + \ell(\wt\ga_0)
	\le   |E_1| +  |E_2|-1
	=    \ell(\ga_0).   
	$$
	Thus, $\ga$ is a geodesic, as required.

\tikzset{every picture/.style={line width=0.75pt}} 
\begin{figure}
\centering
\begin{tikzpicture}[x=0.75pt,y=0.75pt,yscale=-1,xscale=1]

\draw [color={rgb, 255:red, 208; green, 2; blue, 27 }  ,draw opacity=1 ]   (170.32,50.02) -- (89.92,250.08) ;
\draw [color={rgb, 255:red, 208; green, 2; blue, 27 }  ,draw opacity=1 ]   (188.56,99.88) -- (240.72,249.96) ;
\draw [color={rgb, 255:red, 208; green, 2; blue, 27 }  ,draw opacity=1 ]   (188.56,99.88) -- (150.06,99.88) ;
\draw    (170.32,50.02) -- (150.06,99.88) ;
\draw    (170.32,50.02) -- (188.56,99.88) ;
\draw [color={rgb, 255:red, 0; green, 0; blue, 0 }  ,draw opacity=1 ]   (410.32,51.52) -- (329.92,251.58) ;
\draw [color={rgb, 255:red, 74; green, 144; blue, 226 }  ,draw opacity=1 ]   (436.2,121.2) -- (480.72,251.46) ;
\draw [color={rgb, 255:red, 0; green, 0; blue, 0 }  ,draw opacity=1 ]   (428.56,101.38) -- (390.06,101.38) ;
\draw    (410.32,51.52) -- (390.06,101.38) ;
\draw    (410.32,51.52) -- (428.56,101.38) ;
\draw [color={rgb, 255:red, 74; green, 144; blue, 226 }  ,draw opacity=1 ]   (363.2,168.2) -- (329.92,251.58) ;
\draw [color={rgb, 255:red, 74; green, 144; blue, 226 }  ,draw opacity=1 ]   (363.2,168.2) -- (385.2,168.2) ;
\draw [color={rgb, 255:red, 74; green, 144; blue, 226 }  ,draw opacity=1 ]   (388.2,140.2) -- (385.2,168.2) ;
\draw [color={rgb, 255:red, 74; green, 144; blue, 226 }  ,draw opacity=1 ]   (388.2,140.2) -- (402.2,140.2) ;
\draw [color={rgb, 255:red, 74; green, 144; blue, 226 }  ,draw opacity=1 ]   (409.31,101.38) -- (402.2,140.2) ;
\draw [color={rgb, 255:red, 74; green, 144; blue, 226 }  ,draw opacity=1 ]   (409.31,101.38) -- (419.2,121.2) ;
\draw [color={rgb, 255:red, 74; green, 144; blue, 226 }  ,draw opacity=1 ]   (419.2,121.2) -- (436.2,121.2) ;
\draw    (428.56,101.38) -- (436.2,121.2) ;

\draw (75.56,246) node [anchor=north west][inner sep=0.75pt]   [align=left] {\textit{x}};
\draw (242.72,246) node [anchor=north west][inner sep=0.75pt]   [align=left] {\textit{y}};
\draw (152,34) node [anchor=north west][inner sep=0.75pt]   [align=left] {$x\wedge y$};
\draw (134,88) node [anchor=north west][inner sep=0.75pt]   [align=left] {$\wt x$};
\draw (192,88) node [anchor=north west][inner sep=0.75pt]   [align=left] {$\wt y$};
\draw (313.56,247.5) node [anchor=north west][inner sep=0.75pt]   [align=left] {\textit{x}};
\draw (482.72,254.46) node [anchor=north west][inner sep=0.75pt]   [align=left] {\textit{y}};
\draw (393,34) node [anchor=north west][inner sep=0.75pt]   [align=left] {$x\wedge y$};

\end{tikzpicture}
\caption{The red line represents a standard geodesic connecting 
$x$ to $y$, while the blue line depicts an alternative path between them.}
\end{figure}
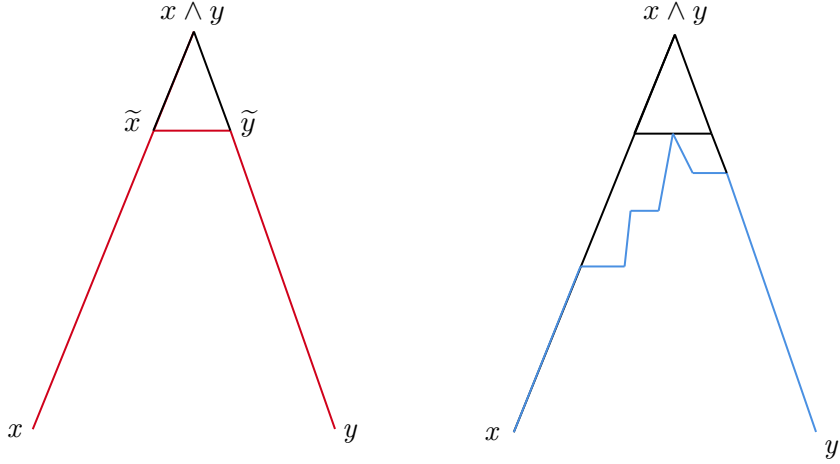

	Next we prove \rmii.  Suppose that $\ga$ is a standard geodesic joining $x$ and~$y$.  The horizontal part $\wt\ga$ of $\ga$ is a geodesic contained
	in $\Si_n$ for some nonnegative integer $n$, with endpoints  $\wt x := p^{h(x)-n}(x)$ and $\wt y := p^{h(y)-n}(y)$. 

	Consider the point $w:=x\wedge y$ (the \textit{confluent} of $x$ and $y$ with respect to the root $o$, i.e., the last point in common between the geodesics
	joining $o$ to $x$ and $y$).
	Denote by $L$ the length of $\wt{\ga}$ and consider the point $p$ in $\wt\ga$ such that $d_{\wh\Ga}(p,\wt y) =~L/2$ in the case where~$L$ is even, and
	$d_{\wh\Ga}(p,\wt y) = \lfloor L/2\rfloor+1$ in the case where $L$ is odd.  

	The spider's web $\wh\Ga$ is, by assumption, $\de$-hyperbolic.  Then the four points condition \eqref{f: Gromov}, applied to $w$, $\wt x$, $\wt y$ and $p$, yields 
	$$
	\begin{aligned}
		& d_{\wh\Ga}(p,\wt x) +  d_{\wh\Ga}(p,\wt y) - d_{\wh\Ga}(\wt x,\wt y) \\
		& \geq \min\big(d_{\wh\Ga}(p,w) +  d_{\wh\Ga}(p,\wt x) - d_{\wh\Ga}(w,\wt x), d_{\wh\Ga}(p,w) +  d_{\wh\Ga}(p,\wt y) - d_{\wh\Ga}(w,\wt y) \big) - 2 \de.
	\end{aligned}
	$$
	Notice that $d_{\wh\Ga}(p,w) = d_{\wh\Ga}(w,\wt x) = d_{\wh\Ga}(w,\wt y)$, and that $d_{\wh\Ga}(p,\wt x) +  d_{\wh\Ga}(p,\wt y) = L$.  
	Thus, the inequality above simplifies to
	$$
		\min\big(d_{\wh\Ga}(p,\wt x),  d_{\wh\Ga}(p,\wt y) \big) \leq 2 \de.
	$$
	Now our choice of $p$ implies that $d_{\wh\Ga}(p,\wt x) \leq 2\de$, so that $d_{\wh\Ga}(p,\wt y) \leq 2\de+1$ and $L \leq 4\de +1$, as required.  

	This concludes the proof of the proposition.  
\end{proof}

Suppose that $\wh\Ga$ is a spider's web associated to the rooted tree $\mrT$, that
$x$ is a vertex in $\wh\Ga$ and $r$ is a nonnegative integer.  We denote by $B_r^\mrT(x)$ and $B_r^{\wh\Ga}(x)$ the balls with centre $x$ and radius $r$
in $\wh\Ga$ with respect to the distances $d_\mrT$ and $d_{\wh\Ga}$, respectively. 

The existence of standard geodesics joining any two points in a spider's web, established in Proposition~\ref{p: geodesics},  
has some noteworthy geometric consequences, one of which, concerning the volume of geodesic balls, is discussed in the following corollary.

\begin{corollary} \label{c: geodesics}
	Suppose that $\mrT$ is a rooted tree satisfying \eqref{f: pinched exp}, and that ${\wh\Ga}$ is a spider's web associated to $\mrT$.  The following hold:
	\begin{enumerate}
		\item[\itemno1]
			$d_{\wh\Ga} \leq d_\mrT$ and $\mu_{\wh\Ga}\big({B_r^{\wh\Ga}(x)}) \geq \mu_{\wh\Ga}\big(B_r^\mrT(x)\big)$; 
		\item[\itemno2] if, in addition, $\wh \Ga$ has  bounded valence, then there exist positive constants $c$ and $C$ such that 
			$$
			c\, a^r \leq \mu_{\wh\Ga}\big({B_r^{\wh\Ga}(x)}\big) \leq C\, b^r
			\quant r \in \BN. 
			$$ 
	\end{enumerate}
\end{corollary}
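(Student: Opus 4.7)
Part (i) is essentially tautological. Since $\wh\Ga$ is obtained from $\mrT$ by adjoining edges only (Definition~\ref{def: spider web}), every $\mrT$-path is automatically a $\wh\Ga$-path, so $d_{\wh\Ga}(x,y) \leq d_\mrT(x,y)$ for every pair of vertices. The inclusion $B_r^\mrT(x) \subseteq B_r^{\wh\Ga}(x)$ follows at once, and the measure inequality is then a consequence of monotonicity of the counting measure (which coincides with both $\mu_\mrT$ and $\mu_{\wh\Ga}$ on the common vertex set).

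The lower bound in part (ii) is immediate from part (i) combined with the left-hand inequality of \eqref{f: pinched exp} applied to $\mrT$: for every $r \geq 1$,
\[
\mu_{\wh\Ga}(B_r^{\wh\Ga}(x)) \geq \mu_\mrT(B_r^\mrT(x)) \geq c\, a^r,
\]
and the case $r = 0$ is trivial after possibly shrinking $c$.

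For the upper bound, the plan is to invoke Proposition~\ref{p: geodesics}(i) and write any $y \in B_r^{\wh\Ga}(x)$ as the endpoint of a standard geodesic of length at most $r$, decomposed into an ascending part of length $l_1$, a horizontal part of length $l_2$ at level $n := h(x) - l_1$, and a descending part of length $l_3$, with $l_1 + l_2 + l_3 \leq r$. The ascending endpoint $z_1 := p^{l_1}(x)$ is uniquely determined by $l_1$; the bounded-valence hypothesis yields at most $V^{l_2}$ choices for the horizontal endpoint $z_2 \in \Sigma_n$; and from each such $z_2$, the descendants at depth $l_3$ form a subset of $B_{l_3}^\mrT(z_2)$, of cardinality at most $Cb^{l_3}$ by \eqref{f: pinched exp}. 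Summing the contributions I obtain
\[
\mu_{\wh\Ga}(B_r^{\wh\Ga}(x)) \leq C \sum_{l_1+l_2+l_3 \leq r} V^{l_2}\, b^{l_3} \leq C'\, b^r,
\]
the last inequality being the standard geometric-sum estimate, valid in the regime $V \leq b$, which is the one encountered for the spider's webs arising in the discretisation of spaces in $\cX_{a,b}^\de$ in Section~\ref{s: Discretisation of Gromov}.

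The main obstacle is precisely the control of the horizontal step: the crude valence bound $V^{l_2}$ for the horizontal ball must be absorbed into the target exponential $b^r$, which forces a compatibility between the valence and the growth rate~$b$. This compatibility is secured by combining the spider's web condition of Definition~\ref{def: spider web}(ii), which ties horizontal neighbours to coincident or neighbouring predecessors, with the pinched exponential growth \eqref{f: pinched exp} of the underlying rooted tree.
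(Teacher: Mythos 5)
Part~(i) and the lower bound in part~(ii) are handled correctly and exactly as in the paper. The issue is the upper bound.

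You invoke only Proposition~\ref{p: geodesics}~\rmi\ (existence of standard geodesics) and then estimate the number of horizontal endpoints by $V^{l_2}$, where $V$ bounds the valence and $l_2$ is the length of the horizontal segment. Since you have no \emph{a priori} bound on $l_2$ (other than $l_2\le r$), the sum $\sum_{l_1+l_2+l_3\le r} V^{l_2}b^{l_3}$ is $O(b^r)$ only under the extraneous hypothesis $V\le b$, which you introduce at the end. That hypothesis is not part of the corollary: the only assumptions are that $\mrT$ satisfies \eqref{f: pinched exp} and that $\wh\Ga$ has bounded valence, and nothing forces the valence bound to be below $b$. Your closing paragraph asserts that the needed compatibility ``is secured by'' Definition~\ref{def: spider web}~\rmii\ together with \eqref{f: pinched exp}, but that is a hope, not an argument: the spider's web condition ties horizontal neighbours to their predecessors, but it does not bound $V$ in terms of $b$, and indeed the spider's webs produced in Section~\ref{s: Discretisation of Gromov} have a valence bound $L_{2\te+2,1/2}$ coming from the local doubling constant, with no reason to be dominated by~$b$.

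The ingredient you are missing is Proposition~\ref{p: geodesics}~\rmii: in a $\de$-hyperbolic spider's web, the horizontal part of any standard geodesic has length at most $4\de+1$. With that, $l_2\le 4\de+1$ uniformly, the factor $V^{l_2}\le V^{4\de+1}$ is an absolute constant, and the sum collapses to $C\sum_{l_1+l_3\le r}b^{l_3}\le C\,b^r$ with no condition relating $V$ and $b$. (This also reveals that the corollary implicitly uses Gromov hyperbolicity of $\wh\Ga$; that hypothesis is not spelled out in the statement, but it is indispensable for the upper bound, and it is the genuine geometric input behind the estimate. Any proof that avoids it, as yours attempts, is bound to need an artificial compensating assumption.)
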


\begin{proof}
	First we prove \rmi.  
	Clearly every curve in $\mrT$ joining two vertices $x$ and $y$ is also an admissible curve joining $x$ and $y$ in ${\wh\Ga}$, whence 
	$d_{\wh\Ga} \leq d_\mrT$.  The second statement in \rmi\ is a direct consequence of this. 

	Next we prove \rmii.  The left hand inequality is an obvious consequence of~\rmi.  It remains to prove the right hand inequality.
	
	Consider a geodesic $\ga$ in ${\wh\Ga}$ of length $r$ joining $x$ and~$y$.  According to Proposition~\ref{p: geodesics}, we may assume that
	$\ga$ has one of the following forms:
	\begin{enumerate}
		\item[(a)]
			$\ga$ moves away from the root $o$ of $\mrT$ at each step; 
		\item[(b)]
			$\ga$ moves $h$ steps joining points at the same level as $x$, until it reaches a point $z$ and then moves 
			$r-h$ steps away from $o$;
		\item[(c)]
			$\ga$ moves $\upsilon$ steps towards $o$, then goes horizontally $h$ steps, and in the end it moves  $r-\upsilon-h$ steps away from $o$.  
	\end{enumerate}

	The geodesics $\ga$ of the form (a) are precisely the geodesics in $\mrT$ starting from $x$ and moving downwards at each step.  Thus,
	the set of points in ${\wh\Ga}$ we can reach with such geodesics is $s^r(x)$  (see just above \eqref{f: Sik} for the definition of $s^r(x)$).  

	Similarly, the vertices that we can reach with geodesics of the form (b) are precisely those in $s^{r-h}(z)$.  Clearly $s^{r-h}(z)$ is contained
	in $B_{r-h}^\mrT(z)$, which, by the right hand inequality in \eqref{f: pinched exp}, has cardinality at most $C\, b^{r-h}$.   

	By Proposition~\ref{p: geodesics}, the estimate $h\leq 4\de +1$ holds, so that the number of vertices
	of ${\wh\Ga}$ reachable from $x$ with a geodesic starting with a horizontal segment is at most
	$$
	C\, \sum_{h=1}^{H}\, b^{r-h}   
	\leq C\, b^r,
	$$
	where $H := \lfloor 4\de\rfloor +2$ and $C$ depends also on the maximum of the valence function on $\wh\Ga$.  

	A similar reasoning shows that the number of vertices of ${\wh\Ga}$ reachable from~$x$ with a geodesic of the form (c) is, at most, 
	$$
	C\, \sum_{\upsilon=1}^r\, \sum_{h=0}^{\min(r-\upsilon,H)} \, b^{r-\upsilon-h}
	\leq C\, b^r.
	$$

	Altogether, we see that the cardinality of the sphere $S_r^{\wh\Ga}(x)$ is, at most, $C\, b^r$.  Therefore
	$$
	\mu_{\wh\Ga}\big(B_r^{\wh\Ga}(x)\big) 
	= \sum_{j=0}^r \, \mu_{\wh\Ga}\big(S_j^{\wh\Ga}(x)\big)  
	\leq C\, \sum_{j=0}^r \, b^j
	\leq C\, b^r,
	$$
	as required.  
\end{proof}

\section[Bounds spider webs]{$L^p$ bounds for $\cM_\infty$ on spider's webs} \label{s: bounds on spider webs}

In this section we prove $L^p$ bounds for the centred HL maximal operator on spider's webs satisfying some mild additional assumptions.  Our approach is
a variant of the strategy that A.~Naor and T.~Tao follow to prove that $\cM$ is of weak type $(1,1)$ on homogeneous trees
(see \cite{NT}, in particular Lemma~5.1 and the proof of Theorem~1.5).  Such strategy was then generalised in \cite[Theorem~4.1]{ST} and used in \cite[Lemma~3.3]{LMSV}.  
For different proofs of the weak type $(1,1)$ estimate for $\cM$, see \cite{RT} and \cite{CMS2}.

We consider the metric measure space $\big(\wh\Ga,d_{\wh\Ga},\mu_{\wh\Ga}\big)$.  Notice that in this case $\cM_0f = f$ (see \eqref{f: local max} for the definition 
of $\cM_0$), so that so that $\cM$ is bounded on $\lp{\wh\Ga}$, or it is of weak type $(p,p)$, if and only if so is the operator $\cM_\infty$.  

In order to avoid cumbersome notation, in this section the cardinality of the set $E$ will be denoted by $\mod{E}$.  For a positive integer $r$, set 
$$
{\bB}_r(x) := \big\{y \in \wh\Ga: d_{\wh\Ga}(y,x) \leq r\big\}.
$$
Given a function $f$ on ${\wh\Ga}$, denote by $A_r f(x)$ the mean of $f$ over ${\bB}_r(x)$, i.e.
$$
A_rf(x)
:= \frac{1}{\bigmod{{\bB}_r(x)}} \, \int_{{\bB}_r(x)} f \wrt \mu_{\wh\Ga}.  
$$
Note the trivial bound 
$$
\cM_\infty f 
\leq \sum_{r=1}^\infty\, A_r\mod{f}.  
$$
The next theorem is the main result of this section.  Recall that if $a$ and~$b$ are positive numbers, then $\tau$ stands for $\log_ab$. 
\begin{theorem} \label{t: NT for spider web}
	Suppose that $a\leq b < a^2$, and that ${\wh\Ga}$ is a spider's web (associated to the tree $\mrT$) satisfying the growth condition \eqref{f: pinched exp}. 
	The following hold:
	\begin{enumerate}
		\item[\itemno1] 
			there exists a positive constant~$C$ such that 
			\begin{equation} \label{f: cond m} 
				a^{-r} \,\bigmod{\big\{(x,y) \in E\times F: d_{\wh\Ga}(x,y) \leq r\big\}} \leq  C\, (\sqrt b/a)^r \, \sqrt{|E| \cdot |F|}
			\end{equation}
			for every nonnegative integer $r$ and every pair $E$, $F$ of subsets of ${\wh\Ga}$; 	
		\item[\itemno2]
			$\cM_\infty$ is of weak type $(\tau,\tau)$, and bounded on $\lp{{\wh\Ga}}$ for every $p$ in $(\tau,\infty]$. 
	\end{enumerate}
\end{theorem}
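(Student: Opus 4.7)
I would exploit the standard form of geodesics in $\wh\Ga$ given by Proposition~\ref{p: geodesics}: any two vertices $x,y$ with $d_{\wh\Ga}(x,y)\leq r$ are joined by a standard geodesic consisting of an ascending segment of length $k_1$, a horizontal segment of length $h\leq 4\de+1$ at some confluent level $n$, and a descending segment of length $k_2$, with $k_1+h+k_2\leq r$. The plan is to partition the count $\bigmod{\{(x,y)\in E\times F:d_{\wh\Ga}(x,y)\leq r\}}$ according to the tuple $(n,h,k_1,k_2)$. For each fixed tuple the contribution equals $\sum_{u\in\Sigma_n}|s^{k_1}(u)\cap E|\,B(u)$, where $B(u)$ counts the elements of $F$ lying $k_2$ levels below some vertex of $\Sigma_n$ within $d_{\wh\Ga}$-distance $h$ of $u$. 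Using the volume bound $|s^k(v)|\leq Cb^k$ from Corollary~\ref{c: geodesics} together with two applications of Cauchy--Schwarz (first on $u\in\Sigma_n$, then on the confluent level $n$), this contribution is at most $Cb^{(k_1+k_2)/2}\sqrt{|E|\cdot|F|}$; summing over the remaining tuples with $k_1+h+k_2\leq r$ yields the bound stated in \rmi.

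\textbf{Plan for \rmii, strong $L^p$-boundedness for $p>\tau$.} Cauchy--Schwarz applied to the bilinear form in \rmi, together with density of simple functions, gives $\|T_r\|_{2\to 2}\leq Cb^{r/2}$, where $T_rf(x):=\sum_{y\in\bB_r(x)}f(y)$. Combined with the trivial bounds $\|T_r\|_{1\to 1}, \|T_r\|_{\infty\to\infty}\leq Cb^r$ and Riesz--Thorin interpolation, this yields $\|T_r\|_{p\to p}\leq Cb^{r\max(1/p,1/p')}$. Using the lower bound $|\bB_r(x)|\geq ca^r$ from \eqref{f: pinched exp}, we obtain $\|A_r\|_{p\to p}\leq C\rho_p^r$; a direct computation based on $b=a^\tau$ shows that $\rho_p<1$ precisely when $p>\tau$ (interpolating as needed with $\|A_r\|_{\infty\to\infty}=1$ to cover all $p\in(\tau,\infty]$). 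Summing the resulting geometric series, $\|\cM_\infty f\|_p\leq\sum_{r\geq 1}\|A_r|f|\|_p\leq C\|f\|_p$ for every $p>\tau$.

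\textbf{Plan for \rmii, weak type $(\tau,\tau)$ and main obstacle.} For the weak-type endpoint I would follow a variant of the Naor--Tao strategy recalled at the start of this section, using part \rmi\ as the crucial bilinear ingredient. For each $x\in E_\lambda:=\{\cM_\infty f>\lambda\}$, select the minimal $r_x\geq 1$ with $A_{r_x}|f|(x)>\lambda$, and partition $E_\lambda=\bigsqcup_{k\geq 1}E_\lambda^k$ according to the value $r_x=k$. On each slice $\langle T_k|f|,\chi_{E_\lambda^k}\rangle\geq c\lambda a^k|E_\lambda^k|$, and the left-hand side admits complementary upper bounds---one from \rmi\ in its $L^2$ bilinear form and one obtained by duality from the $L^\infty$ estimate---which, after a Calder\'on--Zygmund-type truncation of $f$ at level $\lambda$ (the low part satisfying $\cM_\infty\leq\lambda/2$ and being harmless) and summation in $k$ across the crossover $a^k\asymp1/\lambda$, combine to give $|E_\lambda|\leq C\lambda^{-\tau}\|f\|_\tau^\tau$. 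The identity $b=a^\tau$ is essential for the precise exponent-balance. The main obstacle is precisely this endpoint: at $p=\tau$ the operators $A_r$ are only uniformly (not geometrically) bounded on $L^\tau$, so the summation argument used for $p>\tau$ breaks down; producing the $\lambda^{-\tau}$ rate without a logarithmic loss requires the full strength of the bilinear estimate in \rmi, the Calder\'on--Zygmund decomposition of $f$, and the exact balancing enforced by $b=a^\tau$.
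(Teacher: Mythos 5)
Your plan follows the same broad architecture as the paper --- standard geodesics, a bilinear counting bound, a Naor--Tao-style argument at the endpoint --- but there are two genuine gaps.

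For part~\rmi, the decomposition by the tuple $(n,h,k_1,k_2)$ followed by two applications of Cauchy--Schwarz does not deliver the stated bound. Each fixed $(h,k_1,k_2)$ contributes at most $Cb^{(k_1+k_2)/2}\sqrt{|E|\cdot|F|}$, and summing over the tuples with $k_1+h+k_2\leq r$ costs an extra polynomial factor, because the number of $(k_1,k_2)$ with $k_1+k_2=m$ is $m+1$: one obtains $\sum_{m\leq r}(m+1)b^{m/2}\asymp r\,b^{r/2}$, not $Cb^{r/2}$. The paper avoids this loss by parameterising instead by $p=d_{\wh\Ga}(x,y)$, $j=h(x)$ and $k=h(y)$, which (together with the bounded horizontal length $h\le 4\de+1$) already determine the whole standard geodesic, so no free $(k_1,k_2)$-summation arises; it then bounds $|G_{j,k,p}|\le Cb^{(p+j+k)/2}\min(e_j,d_k)$ --- the $\min$ is essential --- and applies the Naor--Tao summation lemma $\sum_{j,k}b^{(j+k)/2}\min(e_j,d_k)\le 8\sqrt{|E||F|}$. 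Your Cauchy--Schwarz over the confluent level replaces $\min(e_j,d_k)$ by $\sqrt{e_jd_k}$, which is too weak for that lemma. Since $\sqrt b/a<1$, the weaker estimate $Cr(\sqrt b/a)^r\sqrt{|E||F|}$ would still suffice for \rmii, and your direct summation-of-$A_r$ proof of $L^p$-boundedness for $p>\tau$ is a valid alternative to the paper's (which deduces it by interpolation from the weak endpoint); but \rmi\ as stated is not proved.

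The more serious gap is the weak-type $(\tau,\tau)$ endpoint, which you yourself flag. A single low/high split of $f$ at height $\lambda$, combined with a crossover in $k$ between the $L^\infty$-duality bound and the $L^2$-bilinear bound, does not close. Normalise $\|f\|_\tau=1$ and put $g:=f\One_{\{f>\lambda/2\}}$. The $L^\infty$ side gives $|\{r_x=k\}|\le C(b/a)^k\lambda^{-1}\|g\|_1\le C(b/a)^k\lambda^{-\tau}$, which grows in $k$, while the $L^2$-bilinear side gives $|\{r_x=k\}|\le C(b/a^2)^k\lambda^{-2}\|g\|_2^2$, which decays. Summing the minimum over $k$ gives $|\{\cM_\infty f>\lambda\}|\lesssim\lambda^{-\tau}$ only when $\|g\|_2^2\lesssim\lambda^{2-\tau}$, and this fails: take $f=M\One_A$ with $M^\tau|A|=1$ and $M\gg\lambda$, so that $\|g\|_2^2=M^{2-\tau}$ is unbounded. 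What the paper does (following Ombrosi--Rivera-R\'ios) is refine the decomposition of $f$ itself into dyadic level sets $E_n=\{2^{n-1}<f\le 2^n\}$ plus a tail $F_r=\{f>a^r/2\}$, handle the tail directly, and apply the bilinear bound~\rmi\ to each of the sub-geometrically weighted sets $V_n=\{2^nA_r\One_{E_n}\ge 2^{n\be-1}\al\}$ with $\be=(2-\tau)/4$ and $\al=a^{-\be r}(1-2^{-\be})$. This refinement in $n$, and in particular the choice of $\be$, is precisely what converts the gain $(b/a^2)^r=a^{(\tau-2)r}$ into a $\lambda^{-\tau}$ bound, and it is the ingredient missing from your sketch.
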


\begin{proof}
	First we prove \rmi.  Set $U_{r} := \{ (x,y) \in E \times F: d_{\wh\Ga}(x,y)\leq r\}$, and observe that $\ds U_r = \sum_{p=0}^r G_p$, where 
	$G_{p} := \{ (x,y) \in E \times F: d_{\wh\Ga}(x,y)=p\}$.  We shall prove that 
	\begin{equation} \label{f: reduction r to p}
		\mod{G_p} \leq C\, b^{p/2} \, \sqrt{\mod{E}\cdot \mod{F}}
	\end{equation}
	for every $p$ in $\{0,\ldots, r\}$, which implies
	$$
		\mod{U_r} \leq C\, b^{r/2} \, \sqrt{\mod{E}\cdot \mod{F}}.
	$$
	Since the left hand side of \eqref{f: cond m} is just~$a^{-r} \, \mod{U_r}$, part \rmi\ follows directly from the previous estimate.

	It remains to prove \eqref{f: reduction r to p}.   

	Suppose that $x$ belongs to $\Si_j$.  We estimate the number of the vertices in~$\Si_k$ at distance $p$ from $x$, i.e. the cardinality
	of $S_p(x) \cap \Si_k$.

	Denote by $\ga$ a geodesic of length $p$ joining $x$ and a point in $\Si_k$.  Recall that by Proposition~\ref{p: geodesics} we may assume that $\ga$ 
	is a standard geodesic, i.e. it has one of the special forms described in (a), (b) and (c) in the proof of Corollary~\ref{c: geodesics}.  
	In particular, denote by $\ga_0$ the horizontal part (possibly reduced to a point) of~$\ga$, and set $\ell := d_{\wh\Ga}(x,\ga_0)$.  
	Then $0\leq \ell \leq \min(j,p)$.  Thus $\ga$ moves first $\ell$ steps towards $o$, then  $h$ steps horizontally, and finally 
	$p-\ell-h$ steps away from $o$.  Note also that $p-\ell-h$ is equal to the difference between $k$, the level of a point in $\Si_k$, and the level of $\ga_0$, i.e.
	$j-\ell$.  Thus, $p= \ell + h + k-(j-\ell)$, so that 
	\begin{equation} \label{f: j and k}
	\ell =\frac{1}{2} \, (p+j-k-h).
	\end{equation}
	Since $p$, $j$ and $k$ are fixed parameters, \eqref{f: j and k} exhibits $\ell$ as a function of $h$.   
	Recall that~$h$ is a nonnegative integer~$\leq H$, where we have set $H := 4\de+1$ (see Proposition~\ref{p: geodesics}).  

	Now, given an integer $h$ in $[0,H]$, how many points in $\Si_k$ are reachable from $x$ with a standard geodesic of length $p$ and horizontal part 
	of length~$h$?  We start from $x$ and move $\ell = \big(p+k-j-h\big)/2$ steps towards $o$ until we reach $p^{\ell}(x)$, which belongs to $\Si_{j-\ell}$.  
	Next we move horizontally $h$ steps, reaching a point $z_\ga$.  The number of points $z_\ga$ in $\Si_{j-\ell}$ that we can reach in this way is bounded from 
	above by the volume of the ball with centre $p^{\ell}(x)$ and radius $h$, which has, at most, $C\, b^h$ points.  Clearly 
	$$
	d_{\wh\Ga} \big(z_\ga, \Si_k\big) 
	= p-\ell-h
	= \frac{1}{2}\, \big(p+k-j-h\big).
	$$   
	Thus, the number of points in $\Si_k$ reachable from $z_\ga$ is bounded by the measure of the ball centred at $z_\ga$ and of radius $\big(p+k-j-h\big)/2$,
	which has, at most, $C\, b^{(p+k-j-h)/2}$ points.  

	Altogether, the number of points in $\Si_k$ reachable from $x$ with a standard geodesic of length $p$ and horizontal part of length~$h$ is
	bounded above by 
	$$
	C\,b^h\, b^{(p+k-j-h)/2}
	= C\, b^{(p+k-j+h)/2}
	$$ 
	points.  Hence 
	\begin{equation} \label{f: product I}
	\bigmod{S_p(x)\cap \Si_k}
	\leq C\, \sum_{h=0}^H \, b^{(p+k-j+h)/2}
	= C\, b^{(p+k-j)/2}.  
	\end{equation}
	
 	A similar reasoning yields that if $y$ belongs to $\Si_k$, then
	\begin{equation} \label{f: product II}
		\bigmod{S_p(y)\cap \Si_j}
		\leq C\, b^{(p+j-k)/2}.
	\end{equation}
	Furthermore, for every pair $(j,k)$ of nonnegative integers define 
	$$
	E_j:=E \cap \Si_j,
	\qquad
	F_k:=F \cap \Si_k,
	$$
	and 
	$$
	G_{j,k,p}
	:= \big\{ (x,y) \in E_j \times F_k: d_{\wh\Ga}(x,y)=p\big\}.  
	$$ 
	Observe that 
	$$
	\mod{G_{j,k,p}}
	=    \sum_{x \in E_j } \sum_{y \in F_k} \One_{S_p(x)}(y) 
	=    \sum_{y \in F_k } \sum_{x \in E_j} \One_{S_p(y)}(x).  
	$$
	Define $e_j:=\mod{E_j}/b^j$ and $d_k:=\mod{F_{k}}/b^{k}$.  Now \eqref{f: product I} and \eqref{f: product II} imply that
	$$
	\begin{aligned}
		\mod{G_{j,k,p}}
		& \le  C \,  \min\, \big(\mod{E_j} \, b^{(p-j+k)/2},\mod{F_k}\, b^{(p+j-k)/2}\big) \\
		& =    C \, b^{(p+j+k)/2}\, \min\, (e_j ,d_k).
	\end{aligned}
	$$
	Therefore
	$$
	\mod{G_p}
		=   \sum_{j,k=0}^\infty \, \mod{G_{j,k,p}}
		\leq C\, b^{p/2} \, \sum_{j,k=0}^\infty \, b^{(j+k)/2} \,  \min\, \big(e_j,d_k\big). 
	$$
	Now we can apply almost \textit{verbatim} the argument in the last part of the proof of \cite[p. 759--760]{NT}, and conclude that the last double series
	is dominated by $8\sqrt{\mod{E} \cdot \mod{F}}$.  The estimate \eqref{f: reduction r to p} follows directly from this.  

	\medskip
	Next we prove \rmii.  It suffices to consider nonnegative functions $f$ in~$L^{\tau}({\wh\Ga})$.  We want to prove that there exists a constant $C>0$ such that
	$$
	\bigmod{\{\cM_\infty f > \la \}} 
	\leq \frac{C}{\la^\tau} \bignormto{f}{L^\tau}{\tau}
	\quant \la > 0
	\quant f \in L^\tau({\wh\Ga}).  
	$$
	By replacing $f$ by $f/\la$, we see that it suffices to prove the estimate above for $\la=1$.  
	Set $\Om := \big\{x \in {\wh\Ga}: f(x) \leq 1/2\big\}$, and, for integers $n$ and $r$, define
	$$
	E_n := \big\{x \in {\wh\Ga}: 2^{n-1} < f(x) \le 2^n\big\}
	\quad\hbox{and}\quad 
	F_r := \big\{x \in {\wh\Ga}: f(x) > a^r/2\big\}.
	$$ 
	We fix $r$, and decompose $f$ accordingly.  Denote by $N$ the biggest integer such that $2^n \le a^r$.  Clearly 
	$$
	{\wh\Ga} =  \Om \cup \Big(\bigcup_{n=0}^N E_n\Big) \cup F_r,
	$$
	so that 
	$$
	f 
	\le \frac{1}{2}\, \One_\Om + \sum_{n=o}^N  \, 2^n \, \One_{E_n}+f\, \One_{F_r}.
        $$ 
	Consequently
	$$
	A_rf 
	\le \frac{1}{2} + \sum_{n=o}^N  \, 2^n \, A_r\One_{E_n}+A_r\big(f\, \One_{F_r}\big).
	$$
	Define $\ds g_r := \sum_{n=o}^N  \, 2^n \, A_r\One_{E_n}$ and $h_r :=  A_r\big(f\, \One_{F_r}\big)$.  Clearly 
	$$
	\big\{A_rf >1\big\} \subseteq \big\{g_r + h_r > 1/2\big\},
	$$ 
	and the latter set is contained in 
	$$
	\big\{g_r > 1/2\} \cup \{ h_r > 0\big\}. 
	$$
	Denote by $I_1$ and $I_2$ the first and the second set in the union above.  Clearly $\mod{\big\{A_rf >1\big\}} \leq \mod{I_1} + \mod{I_2}$.  
	We estimate $\mod{I_1}$ and $\mod{I_2}$ separately.

	First we estimate $|I_2|$.  Observe that if $d_{\wh\Ga}(x,F_r) > r$, then $h_r(x) = 0$, so that $I_2$ is contained in $\ds \bigcup_{y\in F_r} \, {\bB}_r(y)$, 
	Therefore
	$$
	|I_2|
	\le \sum_{y \in F_r} \, |{\bB}_r(y)| 
	\le C \, b^r\,  \mod{F_r}
	=   C \, a^{\tau r} \, \mod{F_r}. 
	$$ 

	Now we estimate $\mod{I_1}$.  Set  $\be := (2-\tau)/4$: notice that $\be$ is positive, for $\tau <2$ by assumption.  It is convenient to set
	$\al :=a^{-\be r}\, \, (1-2^{-{\be}})$ and 
	$$
	V_n := \big\{x: 2^n\, A_r(\One_{E_n})(x) \geq  2^{n\be-1} \,  \al\big\}.
	$$ 
	By arguing almost \textit{verbatim} as in the proof of \cite[p. 501]{OR}, one can show that if $x$ belongs to $I_1$, 
	then $x$ belongs to $V_n$ for some integer $n$ in $\{0,\ldots,N\}$.  Thus, 
	\begin{equation} \label{f: est I1}
        	|I_1| 
		\leq \sum_{n=0}^N  \, |V_n|.  
    	\end{equation} 
	Now, since $2^n\, A_r(\One_{E_n})\geq  2^{n\be-1} \,  \al$ on $V_n$, we see that 
	\begin{equation}  \label{f: lower duality}
		\prodo{\One_{V_n}}{ 2^n\, A_r(\One_{E_n})}
		\ge |V_n|\, 2^{n\be-1} \,  \al,
	\end{equation} 
	where $\langle \cdot, \cdot\rangle$ denotes the inner product on $L^2(\mu_{\wh \Ga})$.  Also the lower estimate in \eqref{f: pinched exp} yields 
	$$
	\begin{aligned}
	\langle \One_{V_n}, 2^n\, A_r(\One_{E_n})\rangle 
		& \leq \frac{2^n}{c \, a^r} \, \int_{V_n} \wrt\mu_{\wh \Ga}(y) \int_{{\bB}_r(y)} \, \One_{E_n} (x) \wrt \mu_{\wh \Ga}(x) \\
		& = \frac{2^n}{c \, a^r} \,\, \bigmod{\big\{(x,y) \in E_n \times V_n \ : d_{\wh\Ga}(x,y)\leq r\big\}}
	\end{aligned}
	$$
	This, combined with  \rmi\ and the lower estimate \eqref{f: lower duality}, implies that 
	$$
	|V_n| 
	\leq C \, \frac{1}{2^{n\be} \,  \al} \, \, 2^n\, (\sqrt b/a)^r \, \sqrt{\mod{V_n}\, \mod{E_n}}.
	$$
	Now recall that $b = a^\tau$.  Then the previous estimate may be re-written as 
	$$
	|V_n| 
		\leq C \, \frac{1}{2^{2n\be} \,  \al^2} \, \, 2^{2n}\, (b/a^2)^r \,  \mod{E_n} 
		= C\, \Big(\frac{2^n}{a^r} \Big)^{1-\tau/2} \, 2^{n\tau} \, \mod{E_n},
	$$
	which, together with \eqref{f: est I1}, yields
	$$
        	|I_1| 
		\leq C\, \sum_{n=0}^N  \, \Big(\frac{2^n}{a^r} \Big)^{1-\tau/2} \, 2^{n\tau} \, \mod{E_n}.  
	$$
	Recall that $\big\{A_r f>1\big\}$ is contained in $I_1\cup I_2$, so that 
	\begin{equation} \label{f: Ar final}
		\bigmod{\big\{A_r f>1\big\}}
	\le C \, \sum_{n=0}^N  \, \Big(\frac{2^n}{a^r} \Big)^{1-\tau/2} \, 2^{n\tau} \, \mod{E_n} + C \, a^{\tau r} \, \mod{F_r}.  
	\end{equation}
	Now, $\ds\cM_\infty f = \sup_{r \ge 1}\, A_r f$.  Therefore if $\cM_\infty f(x) >1$, 
	then there exists a nonnegative integer $r$ such that $A_r f(x) > 1$.   Consequently
	$$
	\begin{aligned}
	\bigmod{\big\{\cM_\infty f>1\big\}}
		& \leq \sum_{r=0}^\infty \,  \bigmod{\big\{A_r f>1\big\}}  \\
		& \leq C \, \sum_{r=0}^\infty \,\sum_{n=0}^N  \, \Big(\frac{2^n}{a^r} \Big)^{1-\tau/2} \, 2^{n\tau} \, \mod{E_n} 
		    + C \, \sum_{r=0}^\infty \,a^{\tau r} \, \bigmod{F_r}.  
	\end{aligned}
	$$
	Now, trivially 
	$$
	\ds \sum_{r=0}^\infty \,a^{\tau r} \, \One_{F_r}  
	\leq \sum_{r=0}^{\lfloor \log_a (2 f) \rfloor} \,a^{\tau r} \, 
	\le  \frac{2^\tau b}{b-1} \, f^\tau,
	$$ 
	whence 
	$$
	\sum_{r=0}^\infty \,a^{\tau r} \, \mod{F_r}   
	= \int_{{\wh\Ga}} \,  \sum_{r=0}^\infty \,a^{\tau r} \, \One_{F_r}  \wrt  \mu_{\wh\Ga}  
	\leq \frac{2^\tau b}{b-1} \bignormto{f}{L^\tau({\wh\Ga})}{\tau}.  
	$$
	Similarly, $\ds \sum_{n=0}^\infty \,2^{\tau n} \, \One_{\wh E_n}  \leq 2^{\tau+1} \, f^\tau$, where we have set $\wh E_n := \{f>2^{n-1}\}$.  Clearly $E_n\subseteq \wh E_n$.
	Using these facts and reversing the order of summation, we see that 
	\begin{align*}
		\sum_{r=0}^\infty \sum_{n=0}^N \Big(\frac{2^n}{a^r}\Big)^{1-\tau/2}\, 2^{\tau n}\, |E_n|
		& \leq  \int_{{\wh\Ga}}\sum_{n =0}^\infty 2^{\tau n}\,\One_{\wh E_n}  \sum_{r:a^r\ge 2^{n+1}}\bigg(\frac{2^n}{a^r}\bigg)^{1-\tau/2}\wrt\mu_{\wh\Ga}\\
		& \le C\,  \int_{{\wh\Ga}}\sum_{n=0}^\infty \, 2^{n\tau}\, \One_{\wh E_n} \wrt\mu_{\wh\Ga}\\
		& \le C\,  \int_{{\wh\Ga}} f^{\tau} \wrt\mu_{\wh\Ga}.
	\end{align*} 
	By combining these estimates, we find that 
	$$
	\mod{\{\cM_\infty f>1\}}
	\leq C \bignormto{f}{L^\tau({\wh\Ga})}{\tau},   
	$$
	as desired.

	This concludes the proof of \rmii, and of the theorem.
\end{proof}

\begin{remark}
	Recall that a tree $\mrT$ is a very special spider's web.  Thus, if $\mrT$ satisfies \eqref{f: pinched exp} for some $b<a^2$, then Theorem \ref{t: NT for spider web} 
	implies that the maximal operator $\cM$ on $\mrT$ is of weak type $(\tau,\tau)$. This improves \cite[Theorem 3.2]{LMSV}, in which it is shown that 
	$\cM$ is of {\it restricted} weak type $(\tau,\tau)$ under stronger assumptions.
\end{remark}

\section[Discretisation of Gromov]{Spider's webs ``embedded" in $\de$-hyperbolic length spaces} \label{s: Discretisation of Gromov} 

Given a positive number $\eta$, we say that a set $\fD$ of points in a metric space $(Y,d)$ is $\eta$-\textit{separated} if $d(y,z) \geq \eta$ 
for every pair of distinct points $y$ and $z$ in $\fD$;  the set $\fD$ is an \emph{$\eta$-discretisation} of~$Y$ if it is a maximal (with respect to inclusion)
$\eta$-separated set in $Y$.
It is straightforward to show that $\eta$-discretisations exist for every $\eta$.     

Notice that if $\fD$ a discretisation of $Y$, then 
$$
d(\fD, x) < \eta 
\quant x \in Y,
$$
for otherwise there would exist a point $x$ in $Y$ such that $d(x,\fD) \geq \eta$, thereby violating the maximality of $\fD$.

We denote by $\Upsilon_{a,b}^\de$ the class of all $\de$-hyperbolic spider's webs $(\wh\Ga,d_{\wh\Ga})$ with the property that the counting measure $\mu_{\wh\Ga}$
on $\wh\Ga$ satisfies the growth condition~\eqref{f: pinched exp}.  

The proof of the main result in this section, Theorem~\ref{t: sri spider} below, hinges on a nontrivial variant of the construction described in \cite[Section~5]{BI}.  
Recall the definition of the class $\cX_{a,b}^\de$ given just above Theorem~\ref{t: main}.

\begin{theorem} \label{t: sri spider}
	Suppose that $\de$, $a$ and $b$ are positive numbers satisfying the condition $1<a\leq b$.  Assume that $(X,d,\mu)$ is in 
	the class $\cX_{a,b}^\de$.  Then there exist a positive number $\de'$ and a spider's web $\wh\Ga$ in the class $\Upsilon_{a,b}^{\de'}$
	that is strictly roughly isometric to $(X,d)$.  
\end{theorem}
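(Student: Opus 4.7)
My plan is to build $\wh\Ga$ by discretising $X$ via spheres of integer radius around a basepoint, taking these as the levels of a rooted tree, and then adding horizontal edges between sufficiently close vertices at the same level so that the spider's web axioms and the strict rough isometry both hold.

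\textbf{Discretisation and tree structure.} Fix a basepoint $o \in X$ and a parameter $\eta \in (0,1)$ to be tuned in terms of $\de$. For each nonnegative integer $k$, let $\Si_k$ be a maximal $\eta$-separated subset of the sphere $\{x \in X : d(o,x)=k\}$; these spheres are nonempty since $X$ is an unbounded complete length space by \eqref{f: pinched exp}. Set $\Si_0 := \{o\}$ and $V := \bigcup_{k \geq 0} \Si_k$; this will be the vertex set of $\wh\Ga$. For each $x \in \Si_k$ with $k \geq 1$, I fix a geodesic $\gamma_x : [0,k] \to X$ from $o$ to $x$ and define the predecessor $p(x) \in \Si_{k-1}$ to be any vertex within distance $\eta$ of $\gamma_x(k-1)$ (which exists by maximality of $\Si_{k-1}$). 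These relations endow $V$ with the structure of a rooted tree $\mrT$ in which $\Si_k$ is precisely the level-$k$ sphere, as in~\eqref{f: Sik}.

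\textbf{Horizontal edges, spider's web property and rough isometry.} I next choose a threshold $C>0$ and declare $\{x,y\}$ to be a horizontal edge of $\wh\Ga$ whenever $x,y$ are distinct points of some $\Si_k$ with $d(x,y) \leq C$. The central point is to verify condition \rmii\ of Definition~\ref{def: spider web}: for any two vertices $x, y \in \Si_k$ joined by a horizontal edge, $p(x)$ and $p(y)$ either coincide or are adjacent in $\wh\Ga$. Using the $\de$-hyperbolicity of $X$ via the Gromov product, if $d(x,y) \leq C$ then $(x,y)_o \geq k - C/2$, and the four points condition \eqref{f: Gromov} yields that the points $\gamma_x(k-1), \gamma_y(k-1)$ fellow-travel within a distance $f(C,\de)$ depending only on $C$ and $\de$. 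Together with the choice of $p(x), p(y)$, this gives $d(p(x),p(y)) \leq 2\eta + f(C,\de)$, and by choosing $\eta,C$ so that $2\eta + f(C,\de) \leq C$, the predecessors are either equal or joined by a horizontal edge in $\Si_{k-1}$, as required. The inclusion $V\hookrightarrow X$ is then checked to be a strict $\be$-rough isometry between $(\wh\Ga, d_{\wh\Ga})$ and $(X,d)$: density of $V$ in $X$ is immediate from the maximality of each $\Si_k$, the upper bound $d_{\wh\Ga}(x,y) \leq d(x,y)+\be$ is obtained by sampling an $X$-geodesic at unit intervals and matching each sample to a nearby vertex on the adjacent (or equal) level, and the lower bound $d(x,y) \leq d_{\wh\Ga}(x,y)+\be$ comes from controlling the total $X$-length of shortest paths in $\wh\Ga$ by summing the individual edge lengths.

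\textbf{Volume growth, hyperbolicity, and main obstacle.} The growth condition \eqref{f: pinched exp} for the counting measure $\mu_{\wh\Ga}$ follows by comparing $\wh\Ga$-balls with $X$-balls via the rough isometry, using \eqref{f: pinched exp} for $\mu$ together with the local doubling property to bound the cardinality of $\eta$-separated sets in $X$-balls from above and below. The Gromov hyperbolicity of $\wh\Ga$ with some constant $\de'$ is obtained by applying Theorem~\ref{t: rqi hyp} to the associated metric graph $\wt\Ga$ (which is a length space roughly isometric to $X$), and then transferring the conclusion to the abstract graph $(\wh\Ga, d_{\wh\Ga})$ via Proposition~\ref{p: Ga Ga_0 wtGa}. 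The hardest part will be the simultaneous choice of $\eta$ and $C$: the spider's web condition forces $C \geq 2\eta + f(C,\de)$, a quantity that grows with $\de$, whereas the strict (as opposed to merely quasi-) rough isometry forces each edge of $\wh\Ga$ to correspond to an $X$-distance close to~$1$, so that $C$ and $\eta$ are bounded from above. Accommodating arbitrarily large $\de$ will therefore require extra care, plausibly by refining the discretisation at a finer intermediate scale and subdividing long horizontal edges with intermediate vertices placed on $X$-geodesics, so as to preserve the spider's web structure while keeping all effective edge lengths in $X$ close to~$1$.
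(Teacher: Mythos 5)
Your overall approach is in the same spirit as the paper's: discretise spheres around a basepoint, use these as levels of a rooted tree, add horizontal edges between nearby same-level vertices, and transfer hyperbolicity and volume growth through a rough isometry. However, the proposal has two genuine gaps, one of which you partially acknowledge and one of which you do not.

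\textbf{The spider's web axiom fails for $\de\geq 1/2$, and the proposed fix is not viable.} Your fellow-travelling argument via Gromov products gives $d\big(\ga_x(k-1),\ga_y(k-1)\big)\leq C-2+4\de$ when $d(x,y)\leq C$ and $C>2$, whence $d\big(p(x),p(y)\big)\leq 2\eta + C - 2 + 4\de$. The requirement $2\eta + C - 2 + 4\de\leq C$ simplifies to $\eta\leq 1-2\de$, which has no solution $\eta>0$ once $\de\geq 1/2$. Subdividing horizontal edges does not repair this: the $4\de$ drift occurring one level down is a feature of the ambient space, not of the mesh of the discretisation, and inserting intermediate vertices at the same level does nothing to bring their predecessors closer. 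The paper circumvents this by accepting that the naive graph $\Ga$ is only a \emph{quasi}-spider's web (Definition~\ref{def: spider web}\,\rmii$'$: the predecessor condition holds only after climbing up at least $m$ levels, with $m$ depending on $\de$), and then completing $\Ga$ to a genuine spider's web $\wh\Ga$ by adding the finitely many missing low-level edges. A separate argument (Step~IV$_2$) then shows that $d_\Ga$ and $d_{\wh\Ga}$ differ by a bounded additive constant $2K$, so the rough isometry survives the completion.

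\textbf{The strict rough isometry is not established; the key tool (Morse's Lemma) is missing.} Both of your sketched arguments only produce multiplicative bounds. Sampling an $X$-geodesic at unit intervals and matching samples to nearby vertices yields $d_{\wh\Ga}(z_j,z_{j+1})\lesssim 2\eta+2$ between consecutive samples, hence $d_{\wh\Ga}(x,y)\lesssim (2\eta+2)\,d(x,y)$ — a $\la$-rough isometry with $\la>1$, not a strict one. Dually, summing $X$-lengths of edges (each up to $C$) along a $\wh\Ga$-geodesic only gives $d(x,y)\leq C\,d_{\wh\Ga}(x,y)$. Your worry that a strict rough isometry ``forces each edge to correspond to an $X$-distance close to $1$'' is misplaced: strict rough isometries tolerate bounded additive errors, and those errors can grow with $\de$. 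The actual mechanism in the paper is this: geodesics in $\Ga_0$ from the root consist of radial steps that move exactly $1$ outward in $d(\cdot,o)$ while having $X$-length at most $2$, which makes them $(2,4)$-quasi-geodesics in $X$ with constants \emph{independent of $\de$ and $\theta$}; the Morse Lemma (geodesic stability, Lemma~\ref{lem: Morse}) then keeps them within a uniform distance of $X$-geodesics; finally, hyperbolicity of $\wh\Ga$ (via Proposition~\ref{p: geodesics}\,\rmii) bounds the number of horizontal edges in any $\wh\Ga$-geodesic by $4\de'+1$, so the contribution of the long horizontal edges is absorbed into a single additive constant. None of this machinery appears in your proposal, and without it the sampling argument cannot be pushed from a $\la$-rough isometry to a strict one.
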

The proof consists of four big steps, each of which is further split up into smaller steps.  Before we dive into the details of the proof we shall outline 
the general scheme.  In order to construct the desired spider's web $\wh {\Ga}$ we begin by defining the underlying tree $\mrT$ and an associated graph $\Ga$, 
obtained from $\mrT$ by adding {\it horizontal} edges to the tree between points that are close enough in the metric space $X$.  The graph $\Ga$
turns out to have bounded valence (Step~I).

We observe that there is no \textit{a priori} reason for which the graph $\Ga$ should be a spider's web, because it might happen
that two points $x$ and $y$ in $\Ga$ with the same level are neighbours, but their predecessors $p(x)$ and $p(y)$ are not.  In Step~IV
we shall construct a spider's web~$\wh\Ga$, which has the same vertices as $\Ga$, but where each vertex $x$ may have 
more neighbours at its level than in $\Ga$.  

In order to do this, we need to establish a few preliminary facts, which occupy Steps~II-III below.  Recall the metric graphs $\Ga_0$ and
$\wt\Ga$ associated to~$\Ga$ (see Caveat~\ref{cav: metric graphs}).

Step~II is devoted to the proof that $\Ga_0$, $\wt\Ga$ and $\Ga$ are $\de'$-hyperbolic.  Step~III, which contains the most technical part of the proof,
is dedicated to the proof that $\Ga_0$ is strictly roughly isometric to $(X,d)$.  
Morse's lemma plays a crucial role in deriving the desired estimates here and in Step~IV.

The last remaining link is proving that, 
although $\Ga$ might not be a spider's web, because of hyperbolicity it is always a quasi-spider's web and therefore can be ``completed'' 
to a true spider's web $\wh \Ga$, which is still strictly roughly isometric to $(X,d)$ (Step~IV).

\begin{proof}
	\textbf{Step~I: construction of the graph $\Ga$.}  This step is split up into Step~I$_1$, where we construct the tree $\mrT$, and Step~I$_2$,
	where we define the graph $\Ga$ and prove that it has bounded valence.   

	\textit{Step~I$_1$: construction of the tree $\mrT$.}
	Fix a point $o$ in $X$.  We define a rooted tree $\mrT$ with root $o$ embedded in $X$ as follows.  For each positive integer~$n$ consider the sphere 
	$S_n(o) := \{x \in X: d(x,o) = n\}$, and a $1$-discretisation~$\Si_n$ thereof (with respect to the distance $d$).  In particular,
	$$
	\bigcup_{x\in \Si_n} \, B_1(x) \supseteq S_n(o).
	$$
	Notice that 
	\begin{equation} \label{f: covering}
	B_1(o)\cup \Big(\bigcup_{n=1}^\infty\,\bigcup_{x\in \Si_n} \, B_2(x)\Big) = X. 
	\end{equation}
	Indeed, consider a point $y$ in $X$.  If $d(y,o) < 1$, then $y$ is covered by $B_1(o)$, and if $d(y,o) = n$ for some positive integer $n$, 
	then we already know that $y$ is covered by $B_2(x)$ for some $x$ in $\Si_n$.  

	Otherwise denote by $n$ the positive integer such that $n < d(y,o) \leq n+1$. 
	Consider the point $y_n$ where the geodesic joining $o$ and $y$ intersects the sphere $S_n(o)$. We have that 
	\[
		d(y_n,y) = d(o,y)-d(o,y_n) = d(o,y)-n \le 1. 
	\]
	By construction of $\Si_n$, there exists a point $x$ in $\Si_n$ such that $d(x,y_n)<1$, whence $y$ belongs to $B_2(x)$.
 
	Set $\ds V := \{o\} \cup\Big(\bigcup_{n=1}^\infty \, \Si_n\Big)$.  The points in $V$ will be the vertices of the rooted tree $\mrT$.
	It remains to indicate, for each $x$ in $V\setminus\{o\}$, its predecessor~$p(x)$.  
	If $x$ belongs to $\Si_1$, we set $p(x) = o$.  If $x$ is in~$\Si_n$ for some $n\geq 2$, then its predecessor $p(x)$ is one, no matter which, of the points 
	in $\Si_{n-1}$ at minimum distance from $x$.  The construction above implies that 
	\begin{align} \label{f: d(x,p(x))}
	d\big(x,p(x)\big) \le 2 \quant x \in \mrT.    
	\end{align}
	Of course different choices of $p(x)$ will give rise to different trees.  All of them share the properties we shall need in the sequel.
	Note also that there may be vertices~$x$ in~$\mrT$ such that $s(x)$ is empty, i.e. $x$ has no successors. 

	\smallskip
	\textit{Step~I$_2$: definition of the graph $\Ga$.}  The vertices of $\Ga$ agree with the vertices of $\mrT$.  Set 
	\begin{equation} \label{f: te0}
	\boxed{
		\te := \max(15,4D_{2,4}+2\de),}
	\end{equation}
	where $\de$ is the hyperbolicity constant of $(X,d)$ and $D_{2,4}$ is the constant appearing in the definition of $(2,4)$-quasi-geodesics in~$(X,d)$.
	We \textit{declare} that two distinct vertices~$x$ and $y$ in $\Ga$ are neighbours if and only if one of the following holds:
	\begin{enumerate}
		\item[\itemno1]
			$x$ and $y$ are neighbours in $\mrT$;
		\item[\itemno2]
			they belong to the same level set $\Si_n$ for some positive integer $n$ and $d(x,y) \leq \te$.
	\end{enumerate}
	
	Observe that \textit{the valence function $\nu$ on $\Ga$ is bounded}.  
	Indeed, if $x\sim y$ in $\Ga$, then either $x$ and $y$ are neighbours in $\mrT$, whence $d(x,y) < 2$ by the construction of $\mrT$
	(see Step~I above), or they belong to the same level and $d(x,y) \leq \te$.  Altogether any neighbour of $x$ in $\Ga$ belongs
	to $B_{\te}(x)$.  Recall that the points in $\Ga$ are $1$-separated in $X$, so that 
	$$
	\mu\big(B_{\te+1}(x)\big)
	\geq \sum_{y\sim x} \mu\big(B_{1/2}(y)\big)
	\geq \nu(x) \, \min_{y\sim x} \mu\big(B_{1/2}(y)\big).  
	$$
	Now, the LDP (see Remark~\ref{r: geom I}) implies that $\nu(x) \leq L_{2\te+2,1/2}$, as required.

	\smallskip
	\textbf{The metric graphs $\Ga_0$ and $\wt\Ga$ are associated to $\Ga$ as in Caveat~\ref{cav: metric graphs}.}

	\smallskip
	\textbf{Step II: $(\wt\Ga,d_{\wt\Ga})$, $(\Ga_0,d_{\Ga_0})$ and $(\Ga,d_\Ga)$ are $\de'$-hyperbolic for some positive~$\de'$}.
	This step is split up into Step~II$_1$, where we prove that if two points in $\Ga$ are close enough with respect to $d$, then they are close
	enough also with respect to $d_\Ga$, and Step~II$_2$, where we show that $(\wt\Ga,d_{\wt\Ga})$ and $(X,d)$ are roughly isometric.  The 
	$\de'$-hyperbolicity of $(\wt\Ga,d_{\wt\Ga})$ then follows from Step~II$_2$ and Theorem~\ref{t: rqi hyp}. The $\de'$-hyperbolicity of $(\Ga_0,d_0)$ 
	and $(\Ga,d_\Ga)$ is a direct consequence of the $\de'$-hyperbolicity of $(\wt\Ga,d_{\wt\Ga})$ and of Proposition~\ref{p: Ga Ga_0 wtGa}.  Finally,
	the $\de'$-hyperbolicity of $(\Ga,d_{\Ga})$ follows from the fact that $d_\Ga$ is the restriction of $d_{\Ga_0}$ to $\Ga$.  

	\smallskip
	\textit{Step~II$_1$:  if $x$ and $y$ are in $\Ga$ and $d(x,y) \leq \te/3$, then $d_\Ga(x,y) \leq (\te/3)+1$.}

	The conclusion is trivial in the case where $x=y$.  Thus, we may assume that $x\neq y$.

	Observe that if $h(x) = h(y)$ and $d(x,y) \leq \te/3$, then $x\sim y$ (by \rmii\ in Step~I$_2$), so that $d_\Ga(x,y) = 1$, and the conclusion follows. 

	If, instead, $h(x) \neq h(y)$, then, without loss of generality, we may assume that $h(x) > h(y)$.  Set $\ell:= h(x) - h(y)$.  
	The triangle inequality implies that 
	$$
	d(x,y) \geq d(x,o) - d(o,y) = \ell.  
	$$
	Therefore $\ell\leq \te/3$.  Observe that
	$$
	d\big(p^{\ell}(x), x\big) 
	\leq \sum_{j=1}^{\ell} \, d\big(p^{j}(x), p^{j-1}(x)\big)  
	\leq 2\ell;
	$$
	the last inequality follows from \eqref{f: d(x,p(x))}. 
	The points $p^{\ell}(x)$ and $y$ belong to the same level, and the triangle inequality implies that
	$$
	d\big(p^{\ell}(x), y\big) 
	\leq  d\big(p^{\ell}(x), x\big) + d(x,y)
	\leq 2\ell + \te/3
	\leq \te.  
	$$
	Therefore $p^{\ell}(x)$ and $y$ are neighbours (see the definition of $\Ga$ in Step~I$_2$), and $[x,\ldots,p^{\ell}(x),y]$ is a path in $\Ga$ 
	of length at most $(\te/3)+1$, as required. 
	
	\smallskip
	\textit{Step~II$_2$:   the identity map is a $(\la,\be)$-rough isometry between $(\wt\Ga,d_{\wt\Ga})$ and $(X,d)$, where} 
	$$
	\boxed{\la :=\te\, (\te/3+1) 
	\quad\hbox{and}\quad
	\be:= \te\, (\te/3+5).
	}
	$$
	Observe that $d(x,\wt\Ga) \leq d(x,\Ga)$ for every $x$ in $X$.  By \eqref{f: covering},
	$$
		\sup\, \{d(x,\Ga): x \in X\}
		< 2,
	$$
	so that $\sup\, \{d(x,\wt\Ga): x \in X\} <2$.  Notice that 
	$$
	d(x,y) \le d_{\wt \Ga}(x,y) \qquad \forall x,y \in \wt \Ga.
	$$ 
	Proposition \ref{p: Ga Ga_0 wtGa}~\rmi\ implies that 
	$$
	d_{\wt \Ga}(x,y) 
	\le  \te \, d_{\Ga_0}(x,y) \quant x,y \in \wt \Ga.
	$$
	Hence, it suffices to prove that 
	\begin{equation} \label{f: Ga0 Ga}	
	\begin{aligned}
		d_{\Ga_0}(x,y) 
		\le (\te/3+1) \, d(x,y)+\te/3+5 \quant x,y \in \wt \Ga.
	\end{aligned}
	\end{equation}
	Suppose that $x$ and $y$ are distinct points in $\wt\Ga$, and 
	denote by $\ga$ a geodesic in $X$ joining them.  If $d(x,y)\le 1$ we set $a_0:=x$ and $a_1:=y$.  If $d(x,y) >1$, then we
	consider the points $\{a_0:=x, a_1,\ldots, a_{N-1}, a_N:=y\}$ in $\ga$ such that $d(a_j,a_{j+1})=1$ for every $j$ in $\{0,...,N-2\}$.  Clearly $d(x,y) \ge N-1$. 
	Next, for every $j$ in $\{0,...,N\}$ choose a point $z_j$ in $\Ga$ at minimum distance in $X$ from $a_j$.  In particular, $d(z_j,a_j) \le 2$. 
	The triangle inequality implies that 
	$$
	d(z_j,z_{j+1}) 
	\le d(z_j,a_j)+d(a_j,a_{j+1}) +d(a_{j+1},z_{j+1}) 
	\le 5
	$$
	for every $j$ in $\{0,...,N-1\}$.  Since $5\leq \te/3$ (see \eqref{f: te0}), Step~II$_1$ yields
	$$
	d_{\Ga_0}(z_j,z_{j+1}) 
	\le \te/3+1.  
	$$
	 By combining the previous estimates, we see that 
	$$
    	d_{\Ga_0}(x,y) 
	\le d_{\Ga_0}(x,z_0)+\sum_{j=0}^{N-1}d_{\Ga_0}(z_j,z_{j+1})+d(y,z_N) 
    	\le 4+N(\te/3+1).
	$$
	The right hand side above may be re-written as
	$$
     	(N-1)(\te/3+1)+\te/3+5  
     	\le d(x,y)(\te/3+1)+\te/3+5,
	$$
	thereby proving \eqref{f: Ga0 Ga}. 

	\smallskip
	\textbf{Step~III: $(\Ga_0,d_{\Ga_0})$ and $(X,d)$ are strictly roughly isometric.} 
	We split up Step~III into Steps~III$_1$-III$_4$.  Step~III$_1$ contains some preliminary estimates.  In Step~III$_2$ we prove that
	$d_{\Ga_0} \geq d-\eta$ for a suitable constant $\eta$.  Step~III$_3$ is devoted to the proof that geodesics 
	in $\Ga_0$ are quasi-geodesics in $X$.  Finally the upper estimate $d_{\Ga_0} \leq d+\eta$ is proved in Step~III$_4$.
 	
	Observe that 
	any geodesic in $\Ga_0$ connecting $o$ to a point $z$ in $\Ga_0$ coincides with the geodesic 
	in the metric subtree of $\Ga_0$ obtained from $\Ga_0$ by removing the horizontal edges introduced in Step~I$_2$.

	\smallskip
	\textit{Step~III$_1$: preliminary estimates.  Suppose that $z$ and $z'$ are points in~$\Ga_0$,  and that $z'$ lies on a geodesic 
	$\ga_{o,z}^0$ in $\Ga_0$  joining $o$ and $z$.  The following hold:
	\begin{enumerate}
		\item[\itemno1]
			if $z$ belongs to $\mrT$, then $\ga_{o,z}^0$ is a $(2,4)$-quasi-geodesic in $(X,d)$;
		\item[\itemno2]
			if $z$ and $z'$ belong to $\mrT$, then
			\begin{equation} \label{f: estimate I}
			d_\mrT(z,z') 
			\geq d(z,z') - 4 D_{2,4},  
			\end{equation}
			where $D_{2,4}$ is as in Step~I$_2$, and $d_\mrT$ denotes the tree distance on~$\mrT$;  
		\item[\itemno3]
			$d_{\mrT} (\floor z,\floor{z'}) \leq d_{\Ga_0} (z,z') + 2$;
			see Notation~\ref{nota: rfloor} for the definition of $\floor z$ and $\floor{z'}$;
		\item[\itemno4]
			$d_{\Ga_0} (z,z') \geq d(z,z') - 2 \te-4D_{2,4}-2$.
	\end{enumerate}
	}

	First we prove \rmi.  It suffices to prove \rmi\ in the case where $z\neq o$.  Denote~by  
	$$
	[a_0:=o, a_1, \ldots, a_{N-1}, a_N:=z] 
	$$
	the points in $\mrT$ that lie on $\ga_{o,z}^0$, and by $\si_j$ the geodesic segment in $\ga_{o,z}^0$ joining~$a_j$ and $a_{j+1}$.  
	Recall that $\si_j$ is also a geodesic in $(X,d)$.

	Suppose that $p$ and $q$ are distinct points in~$\ga_{o,z}^0$.  Without loss of generality we assume that $d(o,p)<d(o,q)$. 
	Then there are nonnegative integers $j_1$ and $j_2$, with $j_1\leq j_2\leq N-1$, such that $p$ and $q$ belong to $\si_{j_1}$ and to $\si_{j_2}$, respectively.

	If $j_1=j_2$, then $\ell_X\big(\ga_{o,z}^0([p,q])\big)=d(p,q)$; here $\ga_{o,z}^0([p,q])$ 
	denotes the segment in $\si_{j_1}$ with endpoints $p$ and $q$.

	If, instead, $j_1<j_2$, then 
	\begin{equation}
	\label{f: lgtpq} 
	\begin{aligned}
		\ell_X\big(\ga_{o,z}^0([p,q])\big)
		& =    d(p,a_{j_1+1})+\sum_{j=j_{1}+1}^{j_2-1}d(a_{j},a_{j+1})+ d(a_{j_2},q)\\
  		& \leq 2\, (j_2-j_1-1)+4;
	\end{aligned}
	\end{equation}
	we have used \eqref{f: d(x,p(x))} in the last inequality, and we agree that the second summand on the first line above vanishes if $j_2\leq j_1+1$. 
	The triangle inequality implies that 
	$$
	d(p,q) 
	\ge d(q,o)-d(p,o) 
	\ge d(a_{j_2},o)-d(a_{j_1+1},o)
	=j_2-j_1-1,
	$$ 
	which, combined with \eqref{f: lgtpq}, yields 
	\begin{align*}
		\ell_X\big(\ga_{o,z}^0([p,q])\big) 
		\le 2\, d(p,q)+4,
	\end{align*}
	as desired.

	Next we prove \rmii.  
	Observe that if $z'=o$ and $z$ belongs to $\Si_n$, then the construction of $\mrT$ (see Step~I) and the definition of~$\Si_n$ imply that 
	$d_\mrT(z,o) = n = d(z,o)$, so that \eqref{f: estimate I} trivially holds.  

	If, instead, $z'$ and $o$ are distinct points, consider the geodesic~$\ga$ in $X$ joining $o$ and $z$.  Since, by \rmi, $\ga_{o,z}^0$ is a 
	$(2,4)$-quasi-geodesic joining $o$ and $z$, Lemma~\ref{lem: Morse} ensures that there exists a positive constant $D_{2,4}$ such that $\ga_{o,z}^0$ lies
	in a $D_{2,4}$-neighbourhood of $\ga$.  Therefore there exists a point $x$ in $\ga$ such that $d(z',x) \leq D_{2,4}$, so that 
	$$
	\begin{aligned}
		\bigmod{d(z,z')-d_\mrT(z,z')}
		& = \bigmod{d(z,z')+d(z',o)-d(z,o)} \\
		& = \bigmod{d(z',o)-d(o,x)+d(z',z)-d(x,z)}\\
		& \leq 2D_{2,4};
	\end{aligned}
	$$
	the first equality follows from the fact that, by construction of the tree~$\mrT$, $d_\mrT(z,z') = d(o,z)-d(o,z')$,
	the second from the formula $d(o,z) = d(o,x) + d(z,x)$, which holds because $z$ lies on $\ga$, and the inequality 
	from the triangle inequality applied to both the triangles $oxz'$ and $z'xz$.  

	This proves \rmii.

	Observe that \rmiii\ is trivial in the case where $z$ and $z'$ are vertices of $\Ga$, for then~$z'$ lies on the geodesic in $\mrT$ joining $o$ and $z$,
	whence $d_{\Ga_0} (z,z') = d_{\mrT} (z,z')$. 

    	Notice that $d_{\Ga_0} (\floor z,\floor {z'}) = d_{\mrT} (\floor z,\floor {z'})$, 
	because $\lfloor z \rfloor$ and $\lfloor z' \rfloor$ are vertices of $\Ga$ lying on the geodesic in $\mrT$ joining $\floor z$ and $o$. 
	Furthermore $d_{\Ga_0}(z,\floor z)\leq 1$ and $d_{\Ga_0}(\floor {z'},z')\leq 1$, because $z$ lies on the edge in $\Ga_0$ joining $\floor z$ and
	one of its neighbours in $\Ga$, and similarly for $z'$.  Therefore
	$$
	d_{\mrT} (\floor z,\floor {z'})
	= d_{\Ga_0}(\floor z,\floor {z'}) 
	\le d_{\Ga_0}(\floor z,z)+d_{\Ga_0}(z,z')+d_{\Ga_0}(z',\floor {z'}),
	$$
	as required.

	Finally we prove \rmiv.   By combining \rmiii\ and \rmii, we see that 
	\begin{equation} \label{f: partial x xprimed}
		d_{\Ga_0} (z,z') 
		\geq d_{\mrT} (\floor z,\lfloor z'\rfloor) -2 
		\geq d (\floor z,\lfloor z'\rfloor) -4D_{2,4}-2.   
	\end{equation}
	Now, the point $z$ lies on a geodesic in $X$ joining $\lfloor z \rfloor$ and one of its neighbours.   
	Such geodesics in $X$ have length at most $\te$, so that 
	$d(z,\floor z)\leq \te$.  Similarly $d(\floor{z'},z')\leq \te $.  
	These estimates and the triangle inequality imply that
	$$ 
	d(\floor z,\floor z')
	\geq  d(z,z') - d(z,\floor z) - d(\floor{z'}, z')
	\geq d(z,z') - 2 \te, 
	$$
	Now, \rmiv\ follows directly from this and \eqref{f: partial x xprimed}.  
	
	\smallskip
	\textit{Step~III$_2$.  The following lower estimate holds: 
	\begin{equation} \label{f: lower}
		d_{\Ga_0} (x,y) \geq d(x,y) - \eta
		\quant x,y \in \Ga_0,
	\end{equation}
	where 
	$$
	\boxed{
	\eta 
	:= 2 \te \delta' + 2\de' + 4 \te + 2 + \max\big(8\, D_{2,4}  + 2,  6D_{\te\la,\te\be}\big).
	}
	$$  
	}  

	Consider the geodesic triangle  $oxy$ in $\Ga_0$ with sides $\ga_{o,x}^0$, $\ga_{o,y}^0$ and $\ga_{x,y}^0$.
	Since, by Step~II$_2$, $\Ga_0$ is a $\de'$-hyperbolic length space, Remark~\ref{rem: three points} implies the existence of a point $p$ in 
	in $(\Ga_0,d_{\Ga_0})$, and points $x'$ in $\ga_{o,x}^0$ and $y'$ in $\ga_{o,y}^0$ such that 
	\begin{equation} \label{f: p delta primed}
	\max\big(d_{\Ga_0}(p,x'), d_{\Ga_0}(p,y')\big) \leq \de'.  
	\end{equation}
	Since $p$ is a point on the geodesic $\ga_{x,y}^0$, 
	\begin{equation} \label{f: p middle}
	d_{\Ga_0} (x,y)
	= d_{\Ga_0} (x,p) + d_{\Ga_0} (p,y).  
	\end{equation}
	Now the triangle inequality, applied to the triangle $xx'p$, and \eqref{f: p delta primed} imply that  
	\begin{equation} \label{f: intermediate last}
	d_{\Ga_0} (x,p)
	\geq d_{\Ga_0} (x,x') - d_{\Ga_0} (x',p)
	\geq d_{\Ga_0} (x,x') - \de'.  
	\end{equation}
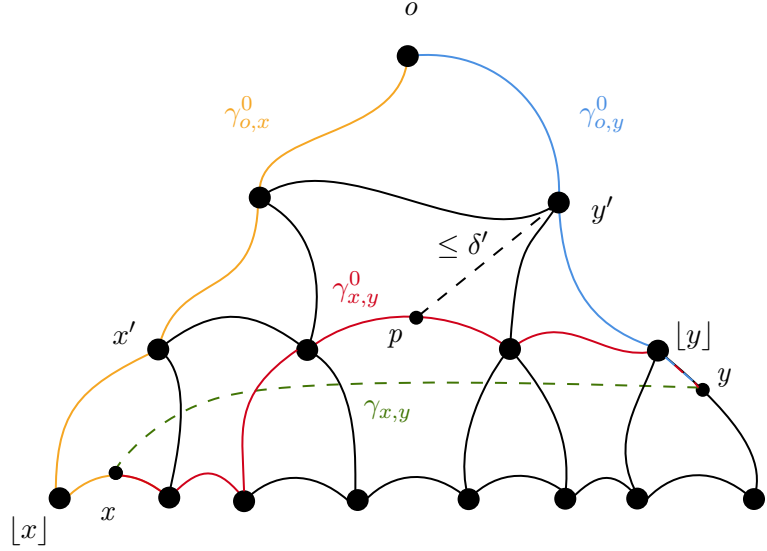
\begin{figure}\label{fig:2} 
\tikzset{every picture/.style={line width=0.75pt}} 

\begin{tikzpicture}[x=0.75pt,y=0.75pt,yscale=-1,xscale=1]

\draw  [fill={rgb, 255:red, 0; green, 0; blue, 0 }  ,fill opacity=1 ] (164.33,326) .. controls (164.33,324.34) and (165.68,323) .. (167.33,323) .. controls (168.99,323) and (170.33,324.34) .. (170.33,326) .. controls (170.33,327.66) and (168.99,329) .. (167.33,329) .. controls (165.68,329) and (164.33,327.66) .. (164.33,326) -- cycle ;
\draw  [fill={rgb, 255:red, 0; green, 0; blue, 0 }  ,fill opacity=1 ] (386.6,339) .. controls (386.6,336.24) and (388.84,334) .. (391.6,334) .. controls (394.36,334) and (396.6,336.24) .. (396.6,339) .. controls (396.6,341.76) and (394.36,344) .. (391.6,344) .. controls (388.84,344) and (386.6,341.76) .. (386.6,339) -- cycle ;
\draw  [fill={rgb, 255:red, 0; green, 0; blue, 0 }  ,fill opacity=1 ] (338.37,339.23) .. controls (338.37,336.47) and (340.61,334.23) .. (343.37,334.23) .. controls (346.14,334.23) and (348.37,336.47) .. (348.37,339.23) .. controls (348.37,341.99) and (346.14,344.23) .. (343.37,344.23) .. controls (340.61,344.23) and (338.37,341.99) .. (338.37,339.23) -- cycle ;
\draw  [fill={rgb, 255:red, 0; green, 0; blue, 0 }  ,fill opacity=1 ] (480.75,339.17) .. controls (480.75,336.41) and (482.99,334.17) .. (485.75,334.17) .. controls (488.51,334.17) and (490.75,336.41) .. (490.75,339.17) .. controls (490.75,341.93) and (488.51,344.17) .. (485.75,344.17) .. controls (482.99,344.17) and (480.75,341.93) .. (480.75,339.17) -- cycle ;
\draw  [fill={rgb, 255:red, 0; green, 0; blue, 0 }  ,fill opacity=1 ] (422.5,338.92) .. controls (422.5,336.16) and (424.74,333.92) .. (427.5,333.92) .. controls (430.26,333.92) and (432.5,336.16) .. (432.5,338.92) .. controls (432.5,341.68) and (430.26,343.92) .. (427.5,343.92) .. controls (424.74,343.92) and (422.5,341.68) .. (422.5,338.92) -- cycle ;
\draw  [fill={rgb, 255:red, 0; green, 0; blue, 0 }  ,fill opacity=1 ] (283,339.67) .. controls (283,336.91) and (285.24,334.67) .. (288,334.67) .. controls (290.76,334.67) and (293,336.91) .. (293,339.67) .. controls (293,342.43) and (290.76,344.67) .. (288,344.67) .. controls (285.24,344.67) and (283,342.43) .. (283,339.67) -- cycle ;
\draw  [fill={rgb, 255:red, 0; green, 0; blue, 0 }  ,fill opacity=1 ] (457.18,284.23) .. controls (457.18,282.58) and (458.53,281.23) .. (460.18,281.23) .. controls (461.84,281.23) and (463.18,282.58) .. (463.18,284.23) .. controls (463.18,285.89) and (461.84,287.23) .. (460.18,287.23) .. controls (458.53,287.23) and (457.18,285.89) .. (457.18,284.23) -- cycle ;
\draw  [dash pattern={on 4.5pt off 4.5pt}]  (388.33,190) -- (317.23,247.95) ;
\draw [color={rgb, 255:red, 65; green, 117; blue, 5 }  ,draw opacity=1 ] [dash pattern={on 4.5pt off 4.5pt}]  (167.33,326) .. controls (167.48,325.11) and (167.64,324.25) .. (167.8,323.41) .. controls (167.97,322.57) and (185.85,296.94) .. (219.96,287.86) .. controls (254.08,278.78) and (345.75,279.42) .. (458.08,283.08) ;
\draw [color={rgb, 255:red, 245; green, 166; blue, 35 }  ,draw opacity=1 ]   (239.17,187.5) .. controls (235.33,152) and (310.33,160) .. (313,116.33) ;
\draw [color={rgb, 255:red, 245; green, 166; blue, 35 }  ,draw opacity=1 ]   (188.5,263.92) .. controls (198.33,222) and (238.33,242) .. (238.33,188) ;
\draw [color={rgb, 255:red, 245; green, 166; blue, 35 }  ,draw opacity=1 ]   (137.33,339) .. controls (137.33,296) and (146.33,284) .. (188.5,263.92) ;
\draw [color={rgb, 255:red, 245; green, 166; blue, 35 }  ,draw opacity=1 ]   (142.33,339) .. controls (149.03,332.05) and (156.81,328.05) .. (165.04,327.36) .. controls (165.8,327.29) and (166.56,327.26) .. (167.33,327.25) .. controls (176.04,327.17) and (185.18,330.75) .. (194,338.42) ;
\draw    (231.33,340.25) .. controls (245.67,325.36) and (265,324.03) .. (283,339.67) ;
\draw    (291.71,339.81) .. controls (306.04,324.93) and (325.37,323.59) .. (343.37,339.23) ;
\draw    (343.37,339.23) .. controls (357.71,324.34) and (377.04,323.01) .. (395.04,338.65) ;
\draw    (432.5,338.92) .. controls (446.83,324.03) and (466.17,322.69) .. (484.17,338.33) ;
\draw [color={rgb, 255:red, 208; green, 2; blue, 27 }  ,draw opacity=1 ]   (194,338.42) .. controls (208.33,323.53) and (216.67,321.44) .. (229.83,337.17) ;
\draw    (392.92,340.67) .. controls (407.25,325.78) and (415.58,323.69) .. (428.75,339.42) ;
\draw [color={rgb, 255:red, 74; green, 144; blue, 226 }  ,draw opacity=1 ]   (313,116.33) .. controls (358,110.78) and (391.33,144.78) .. (388.33,190) ;
\draw [color={rgb, 255:red, 74; green, 144; blue, 226 }  ,draw opacity=1 ]   (388.33,190) .. controls (390.33,236.44) and (408,254.33) .. (439.5,264.08) ;
\draw    (437.83,264.5) .. controls (443.68,269.3) and (448.88,273.79) .. (453.48,278.01) .. controls (454.1,278.58) and (454.72,279.14) .. (455.32,279.7) .. controls (488.42,310.54) and (489.44,326.3) .. (485.75,339.17) ;
\draw    (239.17,187.5) .. controls (279.17,157.5) and (348.33,220) .. (388.33,190) ;
\draw    (188.5,263.92) .. controls (218,240.33) and (236,243.67) .. (262.83,264.17) ;
\draw [color={rgb, 255:red, 208; green, 2; blue, 27 }  ,draw opacity=1 ]   (262.83,264.17) .. controls (292.33,240.58) and (337.17,243.17) .. (364,263.67) ;
\draw [color={rgb, 255:red, 208; green, 2; blue, 27 }  ,draw opacity=1 ]   (363.5,264.25) .. controls (393,240.67) and (409.33,271.67) .. (437.83,264.5) ;
\draw  [fill={rgb, 255:red, 0; green, 0; blue, 0 }  ,fill opacity=1 ] (359,263.67) .. controls (359,260.91) and (361.24,258.67) .. (364,258.67) .. controls (366.76,258.67) and (369,260.91) .. (369,263.67) .. controls (369,266.43) and (366.76,268.67) .. (364,268.67) .. controls (361.24,268.67) and (359,266.43) .. (359,263.67) -- cycle ;
\draw  [fill={rgb, 255:red, 0; green, 0; blue, 0 }  ,fill opacity=1 ] (257.83,264.17) .. controls (257.83,261.41) and (260.07,259.17) .. (262.83,259.17) .. controls (265.59,259.17) and (267.83,261.41) .. (267.83,264.17) .. controls (267.83,266.93) and (265.59,269.17) .. (262.83,269.17) .. controls (260.07,269.17) and (257.83,266.93) .. (257.83,264.17) -- cycle ;
\draw  [fill={rgb, 255:red, 0; green, 0; blue, 0 }  ,fill opacity=1 ] (432.83,264.5) .. controls (432.83,261.74) and (435.07,259.5) .. (437.83,259.5) .. controls (440.59,259.5) and (442.83,261.74) .. (442.83,264.5) .. controls (442.83,267.26) and (440.59,269.5) .. (437.83,269.5) .. controls (435.07,269.5) and (432.83,267.26) .. (432.83,264.5) -- cycle ;
\draw  [fill={rgb, 255:red, 0; green, 0; blue, 0 }  ,fill opacity=1 ] (314.23,247.95) .. controls (314.23,246.29) and (315.58,244.95) .. (317.23,244.95) .. controls (318.89,244.95) and (320.23,246.29) .. (320.23,247.95) .. controls (320.23,249.6) and (318.89,250.95) .. (317.23,250.95) .. controls (315.58,250.95) and (314.23,249.6) .. (314.23,247.95) -- cycle ;
\draw  [fill={rgb, 255:red, 0; green, 0; blue, 0 }  ,fill opacity=1 ] (226.33,340.25) .. controls (226.33,337.49) and (228.57,335.25) .. (231.33,335.25) .. controls (234.09,335.25) and (236.33,337.49) .. (236.33,340.25) .. controls (236.33,343.01) and (234.09,345.25) .. (231.33,345.25) .. controls (228.57,345.25) and (226.33,343.01) .. (226.33,340.25) -- cycle ;
\draw [color={rgb, 255:red, 208; green, 2; blue, 27 }  ,draw opacity=1 ]   (167.33,327.25) .. controls (168.05,327.26) and (168.76,327.28) .. (169.45,327.32) .. controls (178.58,327.85) and (185.25,331.37) .. (194,338.42) ;
\draw  [fill={rgb, 255:red, 0; green, 0; blue, 0 }  ,fill opacity=1 ] (189,338.42) .. controls (189,335.66) and (191.24,333.42) .. (194,333.42) .. controls (196.76,333.42) and (199,335.66) .. (199,338.42) .. controls (199,341.18) and (196.76,343.42) .. (194,343.42) .. controls (191.24,343.42) and (189,341.18) .. (189,338.42) -- cycle ;
\draw [color={rgb, 255:red, 74; green, 144; blue, 226 }  ,draw opacity=1 ] [dash pattern={on 4.5pt off 4.5pt}]  (441.58,267.83) -- (457.08,281.33) ;
\draw [color={rgb, 255:red, 208; green, 2; blue, 27 }  ,draw opacity=1 ]   (446.25,271.83) -- (450.33,275.58) ;
\draw [color={rgb, 255:red, 208; green, 2; blue, 27 }  ,draw opacity=1 ]   (456.1,280.48) -- (458.08,283.08) ;
\draw  [fill={rgb, 255:red, 0; green, 0; blue, 0 }  ,fill opacity=1 ] (383.33,190) .. controls (383.33,187.24) and (385.57,185) .. (388.33,185) .. controls (391.09,185) and (393.33,187.24) .. (393.33,190) .. controls (393.33,192.76) and (391.09,195) .. (388.33,195) .. controls (385.57,195) and (383.33,192.76) .. (383.33,190) -- cycle ;
\draw  [fill={rgb, 255:red, 0; green, 0; blue, 0 }  ,fill opacity=1 ] (164.33,326) .. controls (164.33,324.34) and (165.68,323) .. (167.33,323) .. controls (168.99,323) and (170.33,324.34) .. (170.33,326) .. controls (170.33,327.66) and (168.99,329) .. (167.33,329) .. controls (165.68,329) and (164.33,327.66) .. (164.33,326) -- cycle ;
\draw  [fill={rgb, 255:red, 0; green, 0; blue, 0 }  ,fill opacity=1 ] (234.17,187.5) .. controls (234.17,184.74) and (236.41,182.5) .. (239.17,182.5) .. controls (241.93,182.5) and (244.17,184.74) .. (244.17,187.5) .. controls (244.17,190.26) and (241.93,192.5) .. (239.17,192.5) .. controls (236.41,192.5) and (234.17,190.26) .. (234.17,187.5) -- cycle ;
\draw  [fill={rgb, 255:red, 0; green, 0; blue, 0 }  ,fill opacity=1 ] (308,116.33) .. controls (308,113.57) and (310.24,111.33) .. (313,111.33) .. controls (315.76,111.33) and (318,113.57) .. (318,116.33) .. controls (318,119.09) and (315.76,121.33) .. (313,121.33) .. controls (310.24,121.33) and (308,119.09) .. (308,116.33) -- cycle ;
\draw  [fill={rgb, 255:red, 0; green, 0; blue, 0 }  ,fill opacity=1 ] (183.5,263.92) .. controls (183.5,261.16) and (185.74,258.92) .. (188.5,258.92) .. controls (191.26,258.92) and (193.5,261.16) .. (193.5,263.92) .. controls (193.5,266.68) and (191.26,268.92) .. (188.5,268.92) .. controls (185.74,268.92) and (183.5,266.68) .. (183.5,263.92) -- cycle ;
\draw  [fill={rgb, 255:red, 0; green, 0; blue, 0 }  ,fill opacity=1 ] (134.33,338.75) .. controls (134.33,335.99) and (136.57,333.75) .. (139.33,333.75) .. controls (142.09,333.75) and (144.33,335.99) .. (144.33,338.75) .. controls (144.33,341.51) and (142.09,343.75) .. (139.33,343.75) .. controls (136.57,343.75) and (134.33,341.51) .. (134.33,338.75) -- cycle ;
\draw    (188.5,263.92) .. controls (211.67,279.33) and (191.67,340.33) .. (194,338.42) ;
\draw [color={rgb, 255:red, 208; green, 2; blue, 27 }  ,draw opacity=1 ]   (257.83,264.17) .. controls (224.67,287.33) and (230.67,305.33) .. (231.33,335.25) ;
\draw    (262.83,264.17) .. controls (295.67,280.33) and (283.67,342.33) .. (288,339.67) ;
\draw    (364,263.67) .. controls (395.67,300.33) and (392.67,322.33) .. (391.6,339) ;
\draw    (364,263.67) .. controls (359.67,267.33) and (333.67,304.33) .. (343.37,339.23) ;
\draw    (437.83,264.5) .. controls (429.67,290.33) and (411.67,309.33) .. (428.75,339.42) ;
\draw    (238.33,188) .. controls (277.67,206.33) and (267.67,253.33) .. (262.83,264.17) ;
\draw    (388.33,190) .. controls (373.67,214.33) and (367.67,205.33) .. (364,263.67) ;

\draw (157.83,342.01) node [anchor=north west][inner sep=0.75pt]   [align=left] {\textit{x}};
\draw (403,185.17) node [anchor=north west][inner sep=0.75pt]   [align=left] {${y'}$};

\draw (112,346.33) node [anchor=north west][inner sep=0.75pt]   [align=left] {$\lfloor x \rfloor$};

\draw (308.8,87.97) node [anchor=north west][inner sep=0.75pt]   [align=left] {\textit{o}};
\draw (220.07,135.17) node [anchor=north west][inner sep=0.75pt]   [align=left] {\textcolor[rgb]{0.96,0.65,0.14}{$\gamma_{o,x}^0$}};
\draw (397.2,135.17) node [anchor=north west][inner sep=0.75pt]   [align=left] {\textcolor[rgb]{0.29,0.56,0.89}{$\gamma^0_{o,y}$}};
\draw (273.87,223.3) node [anchor=north west][inner sep=0.75pt]   [align=left] {\textcolor[rgb]{0.82,0.01,0.11}{ $\gamma^0_{x,y}$}};
\draw (164.13,248.2) node [anchor=north west][inner sep=0.75pt]   [align=left] {${x'}$};
\draw (301.2,252.4) node [anchor=north west][inner sep=0.75pt]   [align=left] {\textit{p}};
\draw (465.2,271.97) node [anchor=north west][inner sep=0.75pt]   [align=left] {\textit{y}};
\draw (326,203) node [anchor=north west][inner sep=0.75pt]   [align=left] {$\le \delta'$};
\draw (444,248) node [anchor=north west][inner sep=0.75pt]   [align=left] {$\lfloor y\rfloor$};
\draw (289.65,288.12) node [anchor=north west][inner sep=0.75pt]   [align=left] {\textcolor[rgb]{0.25,0.46,0.02}{$\gamma_{x,y}$}};

\end{tikzpicture}
\caption{The figure represents the points $o$, $x$, $x'$, $\lfloor x\rfloor$, $y, y', [y]$, and $p$, as well as the geodesics in $\Gamma_0$ connecting $o$ to $x$ (orange), $o$ to $y$ (blue), and $x$ to $y$ (red). The geodesic in $X$ connecting $x$ to $y$ is represented by a green dashed curve.}
\end{figure}
	\noindent
	By combining Step~III$_1$~\rmiv\ and \eqref{f: intermediate last} yields 
	$$
	d_{\Ga_0} (x,p)
	\geq d(x,x') - \de'-4D_{2,4}- 2 \te  -2.    	 
	$$
	By arguing similarly, we see that  
	$$
	d_{\Ga_0} (p,y)
	\geq d(y,y') - \de'-4D_{2,4}- 2 \te-2.  
	$$
	Using \eqref{f: p middle} and the last two inequalities, we see that 
	$$
	\begin{aligned}
	d_{\Ga_0} (x,y)
		& \geq d(x,x') + d(y,y')- 2\de'-  8\, D_{2,4}- 4 \te-4   \\
		& \geq d(x,y) - d(x',y') - 2\de'-8\, D_{2,4}- 4 \te-4  \\
		& \geq d(x,y) -2 \te \delta' -  2\de'-8\, D_{2,4}- 4 \te-4;
	\end{aligned}
	$$
	the penultimate inequality follows from the triangle inequality 
	$$
	d(x,y)
	\leq d(x,x') + d(x',y') + d(y',y), 
	$$
	and the last inequality from the fact that 
	$$
	d(x',y') \leq d_{\wt{\Ga}}(x',y') \leq \te \, d_{\Ga_0}(x',y')  \leq 2 \te \delta';
	$$  
	we have used \eqref{f: p delta primed} in the last inequality above.   

	The proof of \eqref{f: lower} is complete. 

	\medskip
	\textit{Step~III$_3$:  geodesics in $\Ga_0$ are $\big(\te\la, \te\be\big)$-quasi-geodesics in $X$, where $\la$ and~$\be$ are
	as in Step~II$_2$.} 

	Suppose that $x$ and $y$ are in $\Ga_0$, and denote by $\ga^0_{x,y}$ a geodesic in $\Ga_0$ joining~$x$ and $y$.  
	Observe that if $\ga^0_{x,y}$ is contained in an edge, then for any pair of points $u$ and $u'$ in $\ga^0_{x,y}$ 
	$$
	\ell_X(\ga^0_{x,y}([u,u']))
	= d_{\wt \Ga}(u,u')
	= d(u,u');
	$$ 
	here $\ga^0_{x,y}([u,u'])$ denotes the segment in $\ga^0_{x,y}$ joining $u$ to $u'$. 
	Otherwise, denote  by $u_1$ and $u_1'$ the vertices in $\ga^0_{x,y}$ at minimum distance in  $\Ga_0$ from~$u$ and $u'$, respectively.  Clearly
	\begin{equation} \label{f: length subgeod}
	\ell_X \big(\ga^0_{x,y}([u,u'])\big)
	= d_{\wt \Ga}(u,u_1) + \ell_X \big(\ga^0_{x,y}([u_1,u_1'])\big) + d_{\wt \Ga}(u'_1,u').
	\end{equation}
	Note that $\ell_X \big(\ga^0_{x,y}([u_1,u_1'])\big)$ is equal to the sum of the lengths in $X$ of the edges in $\ga^0_{x,y}([u_1,u_1'])$.  Since
	the length of each of these is dominated by $\te$,
	$$
	\ell_X \big(\ga^0_{x,y}([u_1,u_1'])\big) 
	\leq \te \, \#\big\{\hbox{edges in $\ga^0_{x,y}([u_1,u_1'])$}\big\}
	=    \te\, d_{\Ga_0}(u_1,u_1').   
	$$
	Furthermore $d_{\wt \Ga}(u,u_1) \le \te\,d_{\Ga_0}(u,u_1)$ and $d_{\wt \Ga}(u'_1,u') \leq \te\,d_{\Ga_0}(u_1',u')$ by the right inequality
	in \eqref{f: comparison distances}.  Now. these estimates and \eqref{f: length subgeod} imply that 
	$$
	\begin{aligned}
	\ell_X \big(\ga^0_{x,y}([u,u'])\big)
	 	& \le  \te\,\big[ d_{\Ga_0}(u,u_1)+ d_{\Ga_0}(u_1,u_1')+d_{\Ga_0}(u_1',u')\big] \\
		& =    \te\, d_{\Ga_0}(u,u').
	\end{aligned}
	$$
	The construction of $\Ga$ implies that the length in $X$ of any geodesic joining any two neighbours in $\Ga$ is at least $1$.  Therefore 
	$d_{\Ga_0}(u,u') \le d_{\wt \Ga}(u,u')$, whence 
	$$
	\ell_X \big(\ga^0_{x,y}([u,u'])\big)
	\leq \te\, d_{\wt\Ga}(u,u').
	$$

	Since, by Step II$_2$, the identity map is a $(\la,\be)$-rough isometry between $(\wt \Ga,d_{\wt \Ga})$ and $(X,d)$, we can conclude that
	$$
	\ell_X \big(\ga^0_{x,y}([u,u'])\big) 
	\le \te \la\, d(u,u') + \te\be,
	$$
	i.e. $\ga_{x,y}^0$ is a $(\te\la,\te\be)$-quasi-geodesic in~$X$, as desired.  

	\medskip
	\textit{Step~III$_4$:  the following upper estimate holds:} 
	\begin{equation} \label{f: upper}
		d_{\Ga_0} (x,y) \leq d(x,y) + \eta
		\quant x,y \in \Ga_0,
	\end{equation}
    	where $\eta$ is as in Step III$_2$.

	Preliminarily, observe that 
	\begin{equation} \label{f: d dwtGa and dGa0}
		d \leq d_{\wt \Ga} \leq \te \, d_{\Ga_0} 
	\end{equation}
	on $\Ga_0$.  The left inequality is trivial, and the right inequality follows from Proposition~\ref{p: Ga Ga_0 wtGa}~\rmi\ (with $\te$
	in place of $A_2$).

	Note that if $x=y$, then \eqref{f: upper} holds trivially.  If $y$ agrees with $o$, then 
	$$
	d_{\Ga_0}(x,o) 
	\leq d_{\Ga_0}(x,\floor x) + d_{\Ga_0}(\floor x,o) 
	\leq 1 + d_{\mrT}(\floor x,o) 
	=    1 + d(\floor x,o). 
	$$
	Now, 
	$$
	d(\floor x,o) 
	\leq d(\floor x,x) + d(x,o) 
	\leq \te + d(x,o).
	$$
	Thus, 
	$$
	d_{\Ga_0}(x,o) 
	\leq 1+\te + d(x,o), 
	$$
	and \eqref{f: upper} holds, because $1+\te < \eta$.

	Thus, we can assume that $x$ and $y$ are distinct points both different from~$o$.
	Denote by $\ga_{x,y}$, $\ga_{o,x}$ and $\ga_{o,y}$ the edges of the geodesic triangle $oxy$ in $X$, and 		
	by $\ga_{x,y}^0$, $\ga_{o,x}^0$ and $\ga_{o,y}^0$ the edges of the corresponding geodesic triangle $oxy$ in $\Ga_0$.   

	Since, by Step~II$_2$, the metric graph $\Ga_0$ is a $\de'$-hyperbolic length space, Remark~\ref{rem: three points} guarantees the existence of points $p$, $v$ 
	and $w$ in $\ga_{x,y}^0$, $\ga_{o,x}^0$ and $\ga_{o,y}^0$, respectively, such that 
	\begin{equation} \label{f: comparison distances}
	d_{\Ga_0}(p,v) < \de'\quad\hbox{and}\quad d_{\Ga_0}(p,w) < \de'.
	\end{equation}

	Step~III$_3$ and Morse Lemma ensure that $\ga_{o,x}^0$, $\ga_{x,y}^0$ and $\ga_{o,y}^0$ are contained in $D_{\te\la,\te\be}$-neighbourhoods
	of~$\ga_{o,x}$, $\ga_{o,y}$ and $\ga_{x,y}$, respectively.  In particular, there exist points~$v'$ in $\ga_{o,x}$, $w'$ in $\ga_{o,y}$ and 
	$p'$ in $\ga_{x,y}$ such that 
	\begin{equation}  \label{f: Dte0}
		\max\big(d(v,v'), d(w,w'), d(p,p')\big) < D_{\te\la,\te\be}.
	\end{equation}
	As an intermediate step, we derive a few consequences from \eqref{f: d dwtGa and dGa0}, \eqref{f: comparison distances} and \eqref{f: Dte0}. 
	The triangle inequality and \eqref{f: Dte0} imply that 
	$$
		d(p',v')
		\leq d(p',p) + d(p,v) + d(v,v')
		<    d(p,v) + 2 D_{\te\la,\te\be}.
	$$
	Moreover $d(p,v) \leq \te\, d_{\Ga_0}(p,v)\leq \te\, \de'$ by \eqref{f: d dwtGa and dGa0} and 
	\eqref{f: comparison distances}.  By combining the last two estimates we see that
	\begin{equation} \label{f: p primed v primed}
		d(p',v') 
		\leq \te\, \de' + 2 D_{\te\la,\te\be}; 
	\end{equation}
	A similar argument yields 
	\begin{equation} \label{f: p primed w primed}
		d(p',w') 
		\leq \te\, \de' + 2 D_{\te\la,\te\be}. 
	\end{equation}

	Now,
	$$
	\begin{aligned}
		d_{\Ga_0}(x,y)
		& = d_{\Ga_0}(x,p) + d_{\Ga_0}(p,y),
	\end{aligned}
	$$
	because $p$ is a point on the geodesic in $\Ga_0$ joining $x$ and $y$.  We use the triangle inequality applied to the  triangle $xpv$ 
	in $\Ga_0$ and \eqref{f: comparison distances}, and obtain that $d_{\Ga_0}(x,p) \leq d_{\Ga_0}(x,v) + \de'$.   A similar argument applied 
	to the triangle $ypw$ in $\Ga_0$ yields $d_{\Ga_0}(p,y) \leq d_{\Ga_0}(w,y) + \de'$.  By combining these inequalities, we obtain that 
	\begin{equation} \label{f: x0 y0}
		d_{\Ga_0}(x,y)
		\leq  d_{\Ga_0}(x,v) + d_{\Ga_0}(w,y) + 2\de'.  
	\end{equation}
	Consider first $x$ and $v$.  Since $v$ belongs to the geodesic $\ga_{o,x}^0$,
	\begin{equation} \label{f: xvo}
		d_{\Ga_0}(x,v)
		=   d_{\Ga_0}(x,o)- d_{\Ga_0}(v,o).
	\end{equation}
	Now, $\floor v$ lies on the geodesic in $\Ga_0$ joining $o$ and $v$, and $\floor{x}$ lies on the geodesic in $\Ga_0$ joining $o$ and $x$. 
	Furthermore, $x$ belongs to a geodesic segment joining $\floor{x}$ and one of its neighbours.  Therefore 
	$$
	d_{\Ga_0}(v,o) \geq d_{\Ga_0}(\floor{v},o)
	\quad\hbox{and}\quad
	d_{\Ga_0}(\floor{x},o)\leq d_{\Ga_0}(x,o) \leq d_{\Ga_0}(\floor{x},o) +1.
	$$ 
	These inequalities and \eqref{f: xvo} imply that 
	$$
		d_{\Ga_0}(x,v)
			\leq   d_{\Ga_0}(\floor x,o)- d_{\Ga_0}(\floor v,o) +1
			=      d_{\Ga_0}(\floor x,\floor v)+1:
	$$
	in the equality above we have used the fact that $\floor v$ lies on the geodesic joining $o$ and $\floor x$.

	A similar inequality holds if we replace $x$ and $v$ with $y$ and $w$.  These observations and \eqref{f: x0 y0} imply that 
	$$
		d_{\Ga_0}(x,y)
		\leq  d_{\Ga_0}(\floor x,\floor v) + d_{\Ga_0}(\floor w,\floor y) + 2\de'+2.
	$$
	Now, observe that 
	$$
	d_{\Ga_0}(\floor x,\floor v)
	= d_{\Ga_0}(\floor x, o) -  d_{\Ga_0}(o,\floor v)
	= d(\floor x, o) -  d(o,\floor v);
	$$
	the second equality follows from the construction of the tree $\mrT$ (see Step~I).  Similarly, 
	$$
	d_{\Ga_0}(\floor y,\floor w)
	= d_{\Ga_0}(\floor y, o) -  d_{\Ga_0}(o,\floor w)
	= d(\floor y,o) -  d(o,\floor w).
	$$ 
	Now, the triangle inequality implies that
	$$
	d(\floor x,o)   
	\leq  d(\floor x, x) +  d(x,o) 
	\leq  d_{\wt\Ga}(\floor x, x) +  d(x,o) 
	\leq \te + d(x,o).
	$$
	Similarly 
	$$
	d(\floor y,o) \le  d(\floor y,y) + d(y,o) \le \te + d(y,o).
	$$
	For much the same reason we see that 
	$$
	d(o,\floor v) \geq d(o,v)-\te
	\quad\hbox{and}\quad
	d(o,\floor w) \geq d(o,w)-\te.
	$$ 
	Now, the triangle inequality applied to the triangles $ovv'$ and $oww'$ in $X$, together with the estimate \eqref{f: Dte0},  imply that 
	$$
	d(o,v) \geq d(o,v') - d(v,v') \geq d(o,v') - D_{\te\la,\te\be}
	$$
	and 
	$$
	d(o,w) \geq d(o,w') - d(w,w') \geq d(o,w') - D_{\te\la,\te\be}.  
	$$
	By combining the estimates above, we find that 
	\begin{equation} \label{f: penultimate}
		d_{\Ga_0}(x,y)
		\leq d(x,o)-d(v',o) + d(y,o) - d(w',o) + 4\te + 2D_{\te\la,\te\be} + 2 \de'+2.
	\end{equation}
	Observe that $d(x,o)-d(v',o) = d(x,v')$, because $v'$ belongs to $\ga_{o,x}$, and similarly
	$d(y,o) - d(w',o) = d(y,w')$.  The triangle inequality applied to the triangles $p'xv'$ and $p'yw'$ in $X$ and the fact that 
	$d \le d_{\wt \Ga} \le \te \, d_{\Ga_0}$ on $\Ga_0$ give
	\begin{align*}
		d(x,v') 
		&\leq d(x,p') + d(p',p)+d(p,v)+d(v,v') \\ 
		&\leq d(x,p') +2D_{\te\la,\te\be} +\te\,d_{\Ga_0}(p,v) \\ 
		&\leq d(x,p') +2D_{\te\la,\te\be} +\te\,\de',    	
	\end{align*}
	and similarly,
	$$
	d(y,w')  \leq d(y,p') +2D_{\te\la,\te\be} +\te \,\de'.  
	$$
	Now, \eqref{f: penultimate} and the fact that $d(x,y) = d(x,p') + d(p',y)$ (which holds, because $p'$ belongs to $\ga_{x,y}$) imply that
	$$
	\begin{aligned}
		d_{\Ga_0}(x,y)
		& \leq d(x,p') + d(p',y) + 2\te \de'+ 4\te +6D_{\te\la,\te\be} + 2 \de'+2 \\
		& =    d(x,y) + 2\te  \de' + 4\te +6D_{\te\la,\te\be} + 2 \de'+2,
	\end{aligned}
	$$
	as required.    

	\smallskip
	\textbf{Step~IV: construction of the spider's web $\wh\Ga$ and conclusion of the proof.} 
	We split up Step~IV into Step~IV$_1$, where we prove that $\Ga$ is a quasi-spider's web. and Step~IV$_2$, where we show that
	$\Ga$ can be completed to a spider's web $\wh\Ga$, which is still strictly roughly isometric to $X$.

	\textit{Step~IV$_1$: $(\Ga,d_\Ga)$ is a quasi-spider's web.  Specifically, if $x$ and $y$ are neighbours in $\Ga$, and $j > D_{2,4}+\de+ \te$, 
	then $p^j(x)$ and $p^j(y)$ either coincide or are neighbours in $\Ga$}.

	Suppose that $x$ and $y$ are neighbours in $\Ga$ belonging to $\Si_n$, and consider the edges $\ga_{x,y}$,~$\ga_{o,x}$ and $\ga_{o,y}$ 
	of the geodesic triangle $oxy$ in $(X,d)$.  Consider also the geodesics $\ga_{o,x}^0$ and $\ga_{o,y}^0$ 
	in $(\Ga_0,d_{\Ga_0})$, and for $j$ in $\{1,\ldots, n-1\}$, the predecessors $p^j(x)$ and $p^j(y)$ of $x$ and $y$, respectively.  

	By \eqref{f: d(x,p(x))}, $d\big(p^j(x),p^{j+1}(x)\big) \le 2$ for every $j$.  
	We have already proved in Step III$_1$~\rmi\ that $\ga_{o,x}^0$ and $\ga_{o,y}^0$ 
	are $(2,4)$-quasi-geodesics in $(X,d)$.  By Morse Lemma (see Lemma \ref{lem: Morse}), there exists a constant $D_{2,4}$, 
	depending on $\de$, such that~$\ga_{o,x}^0$ is contained in the $D_{2,4}$-neighbourhood of $\ga_{o,x}$ and $\ga_{o,y}^0$ is contained in 
	the~$D_{2,4}$-neighbourhood of~$\ga_{o,y}$.  In particular, there exist points $v$ in $\ga_{o,x}$ and $w$ in $\ga_{o,y}$ such that 
	\begin{equation} \label{f: DD}
		d\big(p^j(x),v\big) < D_{2,4}
	\quad\hbox{and}\quad
		d\big(w,p^{j}(y)\big) < D_{2,4}.
	\end{equation}
	Since $(X,d)$ is a $\de$-hyperbolic length space, the geodesic $\ga_{o,y}$ is contained in a $\de$-neighbourhood of $\ga_{o,x} \cup \ga_{x,y}$.
	In particular, there exists a point $z$ in $\ga_{o,x}\cup \ga_{x,y}$ such that $d(z,w) < \de$.

	We claim that if $j>D_{2,4}+\de+\te$, then $z$ belongs to $\ga_{o,x}$.  It suffices to show that $d(w,\ga_{x,y}) > \de$.  Indeed, if $p$ is in $\ga_{x,y}$, then
	$$
	d(w,p)
	\geq d(w,y) - d(y,p)
	\geq d\big(p^j(y),y\big) - d\big(w, p^j(y)\big) - d(y,p);
	$$
	the first inequality above follows from the triangle inequality applied to the triangle $wpy$ and the second from the triangle
	inequality applied to the triangle $p^j(y)wy$.  

	Now, $d(y,p) \leq \te$, because $y$ and $p$ belong to the geodesic $\ga_{x,y}$ and
	such geodesic has length at most $\te$ (for $x$ and $y$ are neighbours in $\Ga$ belonging to the same level $\Si_n$), $d\big(w, p^j(y)\big) < D_{2,4}$
	by \eqref{f: DD}, and $d\big(p^j(y),y\big) \geq j$.   Hence 
	$$
	d(w,p) 
	\geq j-\te-D_{2,4}
	> \de,
	$$
	as claimed.  

	Finally, we shall show that if $j> \te+D_{2,4}+\de$, then $d\big(p^j(x),p^j(y)\big) \leq \te$, whence $p^j(x)$ and $p^j(y)$ are connected in $\Ga$. 

	Observe that $p^j(x)$ and $p^j(y)$ belong to $\Si_{n-j}$, hence to $S_{n-j}(o)$.  Therefore the triangle inequality (applied to the triangle $op^j(y)w$ in $X$)
	implies that 
	$$
	n-j-D_{2,4} 
	\leq d(o,w)
	\leq n-j+D_{2,4}.  
	$$
	Furthermore, the triangle inequality (applied to the triangle $ozw$ in $X$) implies that 
	$$
	d(w,o) - \de
	\leq d(o,z) 
	\leq d(w,o) + \de,
	$$
	so that 
	\begin{equation} \label{f: DD I}
		n-j - D_{2,4} - \de
	\leq d(o,z) 
		\leq n-j +D_{2,4} + \de.  
	\end{equation}
	A similar reasoning gives that 
	\begin{equation} \label{f: DD II}
		n-j-D_{2,4} \leq d(v,o) \leq n-j+D_{2,4}.
	\end{equation}
	Now, since $v$ and $z$ belong to $\ga_{o,x}$, 
	$$
	d(v,z)
	= \bigmod{d(v,o)-d(z,o)}
	\leq 2D_{2,4}+\de;
	$$
	the inequality above follows by combining \eqref{f: DD I} and \eqref{f: DD II}.  Finally, notice that, by the triangle inequality,
	$$
	\begin{aligned}
	d\big(p^j(x),p^j(y)\big)
		& \leq d\big(p^j(x),v\big) +  d(v,z)+d(z,w) + d\big(w,p^j(y)\big) \\
		& \leq 4D_{2,4}+2\de \\
		& \leq \te;
	\end{aligned}
	$$
	the last inequality follows from the definition of $\te$ (see \eqref{f: te0}).
	Therefore the points $p^j(x)$ and $p^j(y)$ either coincide or are connected in~$\Ga$, as required.

	\smallskip
	\textit{Step~IV$_2$: the spider's web $(\wh\Ga,d_{\wh\Ga})$ is strictly roughly isometric to $(X,d)$}.
	The graph $\wh\Ga$ has the same set of vertices as $\Ga$.  We endow $\wh\Ga$ the structure of a spider's web by adding edges to the quasi-spider's web $\Ga$. 
	We declare that two distinct vertices $x$ and $y$ in $\Ga$ are neighbours in $\wh\Ga$ if and only if one of the following holds:
	\begin{enumerate}
		\item[\itemno1]
			$x$ and $y$ are neighbours in $\Ga$;
		\item[\itemno2]
			$x$ and $y$ are not neighbours in $\Ga$, they belong to the same level, 
			and there exists a positive integer $j$ and vertices $v$ and $w$, which are neighbours in $\Ga$, such that $x=p^j(v)$ and $y=p^j(w)$.
	\end{enumerate}

	\noindent
	Thus, we are adding edges to $\Ga$ in order to get $\wh\Ga$, but, loosely speaking,  not too many, for $\Ga$ is already a quasi-spider's web (see 
	Step~IV$_1$).  Therefore there exists a positive integer $m$ such that if $v$ and $w$ are neighbours in~$\Ga$ and belong to the same level,~$\Si_n$ say, 
	then for all $j$ in $\{m,\ldots,n\}$ either the vertices~$p^j(v)$ and~$p^j(w)$ are neighbours in $\Ga$ or they coincide.
	
	We endow $\wh\Ga$ with the graph distance $d_{\wh\Ga}$ and set $K:=\lceil D_{2,4}+\de+\te+1 \rceil$.  We claim that
	\begin{equation} \label{f: ultima}
	d_{\wh\Ga}(x,y) \leq d_\Ga(x,y) \leq d_{\wh\Ga} (x,y)+ 2K.
	\quant x,y\in \Ga.
	\end{equation}
	The left hand inequality above is trivial.  We prove the right hand inequality.
	Suppose that $x$ and $y$ are in $\Ga$.  By Proposition~\ref{p: geodesics}~\rmi, there exists a standard geodesic $\wh \ga$ in $\wh \Ga$ connecting~$x$ to $y$.  
	We shall define a path $\ga$ in $\Ga$ such that 
	$$
	\ell_{\Ga} (\ga) \leq \ell_{\wh\Ga}(\wh\ga) + 2K,
	$$
	which clearly implies \eqref{f: ultima}.  
	
	Denote by $[x,\tilde x]$, $[\tilde x, \tilde y],$ and $[\tilde y, y]$ the ascending, the horizontal and the descending parts of $\wh \ga$, respectively.  
	   Observe that $\wt x$ and $\wt y$ belong to the same level, $\Si_n$ say, so that their distance 
	from $o$ both is equal to $n$ in $\Ga$ and in $\wh\Ga$.

	If $n \le K$, then define $\ga$ to be the union of the ascending geodesic $[x,o]$ and the descending 
	geodesic $[o,y]$.  

	If, instead, $n > K$, then define $\ga$ as the union of the ascending geodesic $[x,p^{K}(\wt x)]$, 
	the horizontal path $\ga_K$, defined as the projection on $\Sigma_{h(\wt x)-K}$ of $[\wt x, \wt y]$ and the descending geodesic $[p^{K}(\wt y),y]$. 

	Observe that, by definition of $\wh \Ga$ (see Step~IV$_1$), if two neighbours $u$ and $v$ in $\wh \Ga$ belong to the same level, then $p^K(u)$ and $p^K(v)$ 
	either coincide, or are neighbours in $\Ga$. 
	This implies that $\ga$ is a path in $\Ga$ connecting $x$ to $y$.  We conclude that 
	$$
	\begin{aligned}
		d_{\Ga}(x,y) \le \ell_\Ga(\ga) 
	  &\le \ell_{\wh\Ga}(\wh \ga)+2K 
		= d_{\wh \Ga}(x,y)+2K,
	\end{aligned}
	$$
	as claimed.

	By Step~III, $(\Ga_0,d_{\Ga_0})$ and $(X,d)$ are strictly roughly isometric.  It is straightforward to check that $(\Ga_0,d_{\Ga_0})$ and  
	$(\Ga,d_{\Ga})$ are strictly roughly isometric.  Furthermore \eqref{f: ultima} implies that $(\Ga,d_{\Ga})$ and $(\wh\Ga,d_{\wh\Ga})$ are 
	strictly roughly isometric. 
	Since the relation of being strictly roughly isometric is an equivalence relation, in particular it is transitive, 
	we can conclude that $(\wh\Ga,d_{\wh\Ga})$ and $(X,d)$ are strictly roughly isometric, as required to conclude the proof of Step~IV$_2$, and 
	of the theorem.  
\end{proof}

\begin{remark}
	\rm{
	Observe that the lower estimate in Step~III$_1$~\rmii\ holds for any pair of points $z$ and $z'$ in $\mrT$.  Indeed, denote by $z\wedge z'$ 
	the confluent of~$z$ and~$z'$ in $\mrT$ with respect to the root $o$, i.e. the last point in common between the geodesics in $\mrT$ joining $o$ to $z$ and $z'$.  Clearly 
	$$
	d_\mrT(z,z') 
	= d_\mrT(z,z\wedge z') +  d_\mrT(z',z\wedge z').  
	$$
	Now, the point $z\wedge z'$ belongs to both the tree geodesics joining $o$ with $z$ and~$o$ with $z'$.  Therefore, by case \rmii, applied twice, we see that 
	$$
	d_\mrT(z,z\wedge z') \geq d(z,z\wedge z') - 2\, D_{2,4} 
	\quad\hbox{and}\quad
	d_\mrT(z',z\wedge z') \geq   d(z',z\wedge z') - 2\, D_{2,4}.  
	$$
	By combining the formulae above, we see that 
	$$
	d_\mrT(z,z') \geq d(z,z') - 4\, D_{2,4},
	$$ 
	as required. 
	}
\end{remark}

\section[Proof of the main result]{The main result and its consequences} \label{s: main}
In this section we prove Theorem~\ref{t: main} and some of its consequences.  
We start with a well-known result concerning $\cM_0$ (see \eqref{f: local max}  for its definition).  For the sake of completeness, we include a simple proof thereof.

\begin{proposition} \label{p: weak cM0}
	Suppose that $(X,d,\mu)$ is a locally doubling metric measure space.  
	Then the operator $\cM_0$ is of weak  type $(1,1)$ and bounded on $\lp{X}$ for all $p$ in $(1,\infty]$.
\end{proposition}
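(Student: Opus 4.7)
The plan is to prove the weak type $(1,1)$ bound directly by a Vitali-type covering argument localized to scale one, and then derive the $L^p$ boundedness for $p \in (1,\infty]$ by Marcinkiewicz interpolation, using the trivial fact that $\cM_0$ is bounded on $\ly{X}$ (with norm $\le 1$).

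First I would fix $f$ in $\lu{X}$ and $\la>0$, and set $E_\la := \{\cM_0 f > \la\}$. By the definition of $\cM_0$, for each $x$ in $E_\la$ there exists a radius $r_x$ in $(0,1]$ such that
$$
\mu\bigl(B_{r_x}(x)\bigr) < \frac{1}{\la} \int_{B_{r_x}(x)} \! \mod{f} \wrt \mu.
$$
Since $r_x \le 1$ for all $x$, the family $\{B_{r_x}(x)\}_{x\in E_\la}$ has uniformly bounded radii, so the basic $5r$-covering lemma produces a countable disjoint subfamily $\{B_{r_{x_j}}(x_j)\}_{j}$ with $E_\la \subseteq \bigcup_{j} B_{5r_{x_j}}(x_j)$.

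The key step is then to control the blow-up by the LDP. Because each ball $B_{r_{x_j}}(x_j)$ belongs to $\cB_1$ and $5 B_{r_{x_j}}(x_j)$ has radius at most $5$, Remark~\ref{r: geom I} yields a constant $L_{5,1}$ such that
$$
\mu\bigl(5 B_{r_{x_j}}(x_j)\bigr) \le L_{5,1} \, \mu\bigl(B_{r_{x_j}}(x_j)\bigr)
\quant j.
$$
Combining this with the disjointness and the lower bound on the measure of each selected ball gives
$$
\mu(E_\la)
\le \sum_j \mu\bigl(5 B_{r_{x_j}}(x_j)\bigr)
\le \frac{L_{5,1}}{\la} \, \sum_j \int_{B_{r_{x_j}}(x_j)} \! \mod{f} \wrt \mu
\le \frac{L_{5,1}}{\la} \, \norm{f}{\lu{X}},
$$
which is the weak type $(1,1)$ estimate.

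The main (very mild) obstacle is simply the verification that the $5r$-covering lemma applies in this generality; but since the radii are bounded by $1$ and the argument is purely metric, no further structural assumption on $X$ is needed. Once the weak type $(1,1)$ and strong $(\infty,\infty)$ bounds are in hand, the Marcinkiewicz interpolation theorem yields the boundedness of $\cM_0$ on $\lp{X}$ for every $p$ in $(1,\infty]$, completing the proof.
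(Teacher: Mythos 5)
Your argument is correct, and it is genuinely more direct than the paper's. You apply the $5r$-covering lemma once, directly to the full family of selection balls $\{B_{r_x}(x)\}_{x\in E_\lambda}$, whereas the paper first decomposes $f=\sum_{z\in\fD}f_z$ over a partition of unity subordinate to the cover $\{B_2(z)\}_{z\in\fD}$ for a $1$-discretisation $\fD$, proves a local weak $(1,1)$ bound for each $\cM_0 f_z$ (whose support sits in the fixed bounded set $\OV{B_4(z)}$) by a Vitali argument, and then sums up using a bounded-overlap estimate. The reason the paper takes the longer route is precisely the point you dismiss as ``very mild'': the covering argument the paper cites (Stein, p.~9) is the greedy one, whose proof needs the radii of the chosen disjoint balls to tend to zero when their total measure is finite. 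That implication fails for a general locally doubling measure — far from a base point, unit balls can have arbitrarily small measure, so one can have infinitely many disjoint balls of radius $\geq\rho>0$ with finite total measure. Confining the balls to $\OV{B_4(z)}$ restores, via the LDP, a uniform lower bound on the measures of the selected balls in terms of their radii, which rescues the greedy selection. Your streamlined route works because one can instead invoke the purely metric $5r$-covering lemma valid in any metric space (\cite[Theorem~1.2]{He}, proved by a Zorn's-lemma maximality argument over dyadic radius scales), and you should state that this is the version you are using. Note also that this version does not by itself produce a \emph{countable} disjoint subfamily $\cG$; countability follows a posteriori from disjointness, from $\mu(B)>0$ for every ball, and from the bound $\sum_{B\in\cG}\mu(B)\leq\lambda^{-1}\norm{f}{\lu{X}}<\infty$. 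With these two clarifications your proof is complete, and it is the cleaner of the two.
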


\begin{proof}
	The operator $\cM_0$ is trivially bounded on $\ly{X}$.   We prove the weak type $(1,1)$ estimate: the boundedness of $\cM_0$ on $\lp{X}$ 
	for $p$ in $(1,\infty)$ will follow from this by interpolation.   

	Consider a $1$-discretisation $\fD$ of $X$. 
	Then every point in $X$ is covered by a ball $B_2(z)$ for some $z$ in $\fD$.  Set $\ds \psi := \sum_{z\in \fD} \, \One_{B_2(z)}$.  Observe that 
	$$
	1\leq \psi \leq L_{12,1/2};
	$$
	see, for instance, \cite[Lemma~1~\rmi]{MVo}.  The constant $L_{\tau,s}$ is defined in Remark~\ref{r: geom I}. 

	Suppose that $f$ is in $\lu{X}$.  For every $z$ in $\fD$, set $\ds f_z := f\, \frac{\One_{B_2(z)}}{\psi}$.  Observe that 
	$\ds f = \sum_{z\in \fD} \, f_z$.  By the sublinearity of $\cM_0$ we have that 
	\begin{equation} \label{f: sublin cM0}
		\cM_0f 
		\leq \sum_{z\in \fD} \, \cM_0 {f_z}  
	\end{equation}
	Observe that the support of $\cM_0 {f_z}$ is contained in $\OV{B_4(z)}$.  Thus, given a point~$x$, the function $\cM_0 {f_z}$
	(possibly) does not vanish at $x$ only if $d(x,z) < 4$, i.e. only if $z$ belongs to $B_4(x)$.  Now
	$$
		\sharp \big(\fD\cap B_4(x)\big) \leq \frac{L_{12,1/2}}{L_{16}}
	$$
	(see, for instance, \cite[Lemma~1~\rmii]{MVo}).  Denote by $N$ the constant on the right hand of the inequality above.
	Then for each point $x$ there are at most~$N$ summands of the series in \eqref{f: sublin cM0} (possibly) nonvanishing at~$x$.   
	Consequently, for each positive number $\la$ the following containment holds:
	\begin{equation} \label{f: level set cM0}
		\big\{\cM_0f > \la \big\}
		\subseteq \bigcup_{z\in \fD} \, \big\{\cM_0 {f_z} > \la/N \big\}.   
	\end{equation}
	We shall prove that there exists a constant $C$, independent of $z$, such that 
	\begin{equation} \label{f: level set summand}
		\bigmod{\big\{\cM_0 {f_z} > \si \big\}} 
		\leq \frac{L_5^3}{\si} \, \bignorm{f_z}{1}
		\quant \si >0.  
	\end{equation}
	This will imply that 	
	$$
	\bigmod{\big\{\cM_0 f > \la\big\}}  
	\leq \frac{L_5^3N}{\la} \, \sum_{z\in \fD} \, \bignorm{f_z}{1}
	=    \frac{L_5^3N}{\la} \bignorm{f}{1},
	$$
	as required.   

	Thus, it remains to prove \eqref{f: level set summand}.   Denote by $E_\si$ the level set $\big\{\cM_0 {f_z} > \si\big\}$.  If $x$ belongs to $E_\si$, 
	then there exists a ball $B_x$, centred at $x$ and of radius at most $1$, such that 
	$$
		\mod{B_x} \leq \frac{1}{\si} \, \int_{B_x} \, \mod{f_z} \wrt \mu.   
	$$
	The collection of balls $\cF := \{B_x: x \in E_\si\}$ covers $E_\si$.  Now, we can argue \textit{verbatim} as in the proof of 
	\cite[Lemma, p,~9]{St} and select from $\cF$ a (possibly finite) sequence $\{B_j\}$ of mutually disjoint balls such that $\{5B_j\}$ covers $E_\si$.
	Therefore 
	$$
		\mod{E_\si} \leq \sum_j \, \mod{5B_j} \leq L_{5,1} \, \sum_j \, \mod{B_j} \leq \frac{L_{5,1}}{\si} \, \int_{B_j} \, \mod{f_z} \wrt \mu
		\leq \frac{L_{5,1}}{\si} \bignorm{f_z}{1},     
	$$
	as required.   
\end{proof}

We prove our main result, Theorem~\ref{t: main}, which we restate for the reader's convenience.

\begin{theorem*} 
	Suppose that $1<a\leq b< a^2$ and that $\de$ is a nonnegative number.  Assume that $(X,d)$ is a $\de$-hyperbolic complete length space and 
	$\mu$ is locally doubling  Borel measure on $X$ such that \eqref{f: pinched exp} holds.  Then the centred HL maximal operator $\cM$
	is bounded on $\lp{X}$ for all $p>\tau$, and it is of weak type $(\tau,\tau)$.  
\end{theorem*}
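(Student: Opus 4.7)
The plan is exactly the three-step program announced in the introduction: discretise $X$ as a spider's web, invoke the maximal bounds on the web, and transfer back. First, observe that by Proposition~\ref{p: weak cM0} the local maximal operator $\cM_0$ is of weak type $(1,1)$ and bounded on $\lp{X}$ for every $p>1$. Since the trivial pointwise bound $\cM f\le \cM_0 f+\cM_\infty f$ holds and $\tau>1$ (because $b>a$), it suffices to prove that $\cM_\infty$ is of weak type $(\tau,\tau)$ and bounded on $\lp{X}$ for $p>\tau$.

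Second, applying Theorem~\ref{t: sri spider} to $X\in\cX_{a,b}^\de$, I obtain a spider's web $\wh\Ga\in\Upsilon_{a,b}^{\de'}$ and a strict $\be$-rough isometry $\vp\colon X\to\wh\Ga$. Because $a\le b<a^2$, Theorem~\ref{t: NT for spider web}~\rmii\ gives that the global maximal operator $\cM_\infty^{\wh\Ga}$ (with respect to $d_{\wh\Ga}$ and $\mu_{\wh\Ga}$) is of weak type $(\tau,\tau)$ and bounded on $L^p(\wh\Ga)$ for $p>\tau$.

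Third, and this is where the main work lies, I transfer these bounds to $X$ by adapting the arguments of \cite[Theorem~5.4]{LMSV} and \cite[Theorem~5.5]{MPSV}. Let $V$ denote the vertex set of $\wh\Ga$, viewed as a $1$-discretisation of~$X$, and for $f\in\lp X$ define a discretised function $F\colon V\to\BR$ by averaging $\mod f$ over $B_1(v)$, normalised so that $f\mapsto F$ is bounded from $\lp X$ to $\ell^p(V)$. This boundedness follows from the LDP together with the uniform bound on $\sharp(V\cap B_4(x))$ already used in the proof of Proposition~\ref{p: weak cM0}. The key pointwise inequality to establish is, for all $x\in X$ and all $r\ge1$,
$$
\frac{1}{\mu(B_r(x))}\int_{B_r(x)}\mod f\wrt\mu
\le C\,\frac{1}{\mu_{\wh\Ga}\bigl(B_{r+C}^{\wh\Ga}(\vp(x))\bigr)}\sum_{v\in B_{r+C}^{\wh\Ga}(\vp(x))}F(v),
$$
where the constant $C$ depends only on $\be$, $L_{\cdot,\cdot}$, $a$, $b$. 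The shift $r\mapsto r+C$ is afforded by \eqref{f: RI due}; the comparability of the volume factors on the two sides is afforded by the pinched growth condition~\eqref{f: pinched exp}, which holds on $X$ by assumption and on $\wh\Ga$ by Corollary~\ref{c: geodesics}~\rmii. Taking the supremum over $r\ge1$ yields the pointwise domination $\cM_\infty f(x)\le C\,(\cM_\infty^{\wh\Ga}F)(\vp(x))$.

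The main obstacle is exactly the verification of the displayed pointwise inequality, because $\vp$ is neither injective nor surjective, and small-scale oscillations of $f$ must be absorbed by $\cM_0$-type arguments while keeping the overall constants independent of $r$. I would handle this by covering $B_r(x)$ by the balls $\{B_2(v):v\in V\cap B_{r+2}(x)\}$ (with bounded overlap, by the same packing argument as in the proof of Proposition~\ref{p: weak cM0}), writing $\int_{B_r(x)}\mod f\wrt\mu\le \sum_v \int_{B_2(v)}\mod f\wrt\mu\le C\sum_v F(v)\,\mu(B_1(v))$, bounding $\mu(B_1(v))$ above and below by universal constants via the LDP and \eqref{f: pinched exp}, and finally using $V\cap B_{r+2}(x)\subset B_{r+C}^{\wh\Ga}(\vp(x))$ together with \eqref{f: pinched exp} on both $X$ and $\wh\Ga$ to compare $\mu(B_r(x))$ with $\mu_{\wh\Ga}\bigl(B_{r+C}^{\wh\Ga}(\vp(x))\bigr)$ up to a multiplicative constant (using $r\ge 1$). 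Once this pointwise domination is in hand, the boundedness $\vp^*\colon L^{p,\infty}(\wh\Ga)\to L^{p,\infty}(X)$ (which again follows from \eqref{f: pinched exp} on both sides, controlling the fibres of $\vp$), combined with the Theorem~\ref{t: NT for spider web} bounds, yields both the strong type $(p,p)$ for $p>\tau$ and the weak type $(\tau,\tau)$ estimates for $\cM_\infty$, completing the proof.
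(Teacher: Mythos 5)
Your overall strategy is the paper's: discretise $X$ by Theorem~\ref{t: sri spider}, prove the maximal bounds on the spider's web by Theorem~\ref{t: NT for spider web}, and transfer back. The first two steps are correctly invoked and the structure of the transfer (a pointwise domination $\cM_\infty f(x)\le C\,\cM_\infty^{\wh\Ga}F(\vp(x))$, plus a weak-type bound for the pullback $\vp^*$) matches the paper's argument. However, the justification of the key volume comparison contains a genuine error. You claim that $\mu\big(B_r(x)\big)$ and $\mu_{\wh\Ga}\big(B_{r+C}^{\wh\Ga}(\vp(x))\big)$ are comparable ``up to a multiplicative constant\ldots afforded by the pinched growth condition \eqref{f: pinched exp}, which holds on $X$ by assumption and on $\wh\Ga$ by Corollary~\ref{c: geodesics}~\rmii''. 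This is false: \eqref{f: pinched exp} only yields $\mu\big(B_r(x)\big)\ge c\,a^r$ and $\mu_{\wh\Ga}\big(B_{r+C}^{\wh\Ga}(\vp(x))\big)\le C\,b^{r+C}$, so the ratio is at most a constant times $(b/a)^r$, which is unbounded when $a<b$. Pinched exponential growth on two spaces does not make balls of comparable radii centred at nearby points have comparable measures; that would be a doubling-type property, precisely what is absent here. The paper's actual estimate \eqref{f: ineq measures} never compares the two volumes through the global growth bounds: it makes a chain of purely \emph{local} comparisons, combining the $1$-separation of the vertex set, the local doubling property, the bounded overlap of the cover $\{B_2(v)\}$, and the length-space covering $B_{R+c}(z)\subset\bigcup_{v\in\wh\Ga\cap B_R(z)}B_{c'}(v)$, to obtain $\mu_{\wh\Ga}\big(B_{R+\be+4}^{\wh\Ga}(x_z)\big)\le C\,\sharp\big(\wh\Ga\cap B_R(z)\big)\le C\,\mu\big(B_R(z)\big)$ directly. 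This local covering chain is the nontrivial content of the transfer step and is exactly what replaces the invalid ``compare volumes by growth bounds'' claim.

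A minor further inaccuracy: you define $F(v)$ as the average of $\mod f$ over $B_1(v)$ but then estimate $\int_{B_2(v)}\mod f\wrt\mu\le C\,F(v)\,\mu\big(B_1(v)\big)$, which would require $\int_{B_2(v)}\mod f\wrt\mu\lesssim\int_{B_1(v)}\mod f\wrt\mu$; that is not true in general. The averaging radius has to match the covering radius (the paper takes $\pi f(v):=\int_{B_2(v)}f\wrt\mu$), after which the $L^p(X)\to\ell^p(\wh\Ga)$ bound follows from H\"older or Jensen as you indicate.
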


\begin{proof}
	By Proposition~\ref{p: weak cM0}, $\cM_0$ is of weak type $(1,1)$ and bounded on $\lp{X}$ for all $p$ in $(1,\infty]$.   Thus, it suffices to prove
	that $\cM_\infty$ has the required mapping properties.  
	
	Since $X$ belongs to the class $\cX_{a,b}^\de$, Theorem~\ref{t: sri spider} implies that $X$ is strictly roughly isometric to a spider's web $\wh\Ga$ 
	in the class $\Upsilon_{a,b}^{\de'}$,  endowed with its graph distance.	 More precisely, the proof of Theorem~\ref{t: sri spider} 
	shows that the set of vertices of $\wh\Ga$ is a discrete subset of $X$, and that the identity operator $\iota: \wh\Ga\to X$ is 
	a strict $\be$-rough isometric embedding of $(\wh\Ga,d_{\wh\Ga})$ into $(X,d)$, i.e. 
	$$
	d_{\wh\Ga}(x,y) -\be 
	\leq d(x,y)
	\leq d_{\wh\Ga}(x,y) + \be
	\quant x,y\in \wh\Ga.  
	$$
	Now, by Theorem~\ref{t: NT for spider web}, the operator
	$\cM_\infty$ is of weak type $(\tau,\tau)$ and bounded on $\lp{\wh\Ga}$ for all $p$ in $(\tau,\infty]$.  

	It remains to prove that the operator $\cM_\infty$ on $X$ inherits the abovementioned mapping properties of the operator $\cM_\infty$ on $\wh\Ga$.

	Recall that $\cC := \{B_2(x): x \in \wh\Ga\}$ is a covering of $X$ (see \eqref{f: covering}).  For each point $z$ in $X$ denote by $\wh\Ga_z$ the set of 
	all~$x$ in $\wh\Ga$ such that $z$ belongs to $B_2(x)$, and set $\ds \om := \sup_{z\in X} \sharp (\wh\Ga_z)$.  We call $\om$ the \textit{overlapping number}
	of the covering~$\cC$.

	We \textit{claim} that $\om$ is finite. 
	Indeed, observe that  $d(z,x) < 2$ for every $x$ in $\wh\Ga_z$.  Since the vertices of $\wh\Ga$ consitute a $2$-discretisation of $X$, 
	the balls in $\{B_{1}(x): x \in \wh\Ga\}$ are mutually disjoint and the balls in $\{B_{1}(x): x \in \wh\Ga_z\}$ are contained in~$B_{3}(z)$.  
	This and the upper estimate in \eqref{f: pinched exp} imply that 
	$$
	\sharp (\wh\Ga_z) \cdot \inf\, \big\{\mu\big(B_{1}(x)\big): x \in \wh\Ga_z\big\}
	\leq \sup\, \big\{\mu\big(B_{3}(z)\big): z \in X\big\} 
	\leq C\, b^{3}.  
	$$
	Now, the LDP and the lower estimate in \eqref{f: pinched exp} imply that 
	$$
	c\, a \leq \mu\big(B_1(x)\big).
	$$
	Altogether, we see that 
	$$
	\sharp (\wh\Ga_z) \leq C \, \frac{b^{3}}{c\,a}
	\quant z \in X,
	$$
	and the claim follows.
	
	For each nonnegative measurable function $f$ on $X$, consider the function~$\pi f$ on $\wh\Ga$, defined by the formula
	$$
	(\pi f)(x)
	:= \int_{{B_{2}(x)}} \, f \wrt \mu
	\quant x \in \wh\Ga.  
	$$
	For each $z$ in $X$, denote by $x_z$ a point in $\wh\Ga$ at minimum distance in $X$ from~$z$; in particular, $d(z,x_z)  <2$.  There are, at most, 
	$\om$ such points, so that the choice of $x_z$ is somehow arbitrary.  However, this will have no consequences in what follows. 
	For each $x$ in $\wh\Ga$, denote by $\Om_x$ the set of all $z$ in $X$ such that $x_z=x$.  Clearly~$\Om_x$ is contained in ${B_{2}(x)}$, and 
	$$
	X = \bigcup_{x\in \wh\Ga} \, \Om_x.
	$$
	Simple geometric considerations show that for each $z$ in $X$ and for every $R>1$ the following containment holds
	$$
	B_R(z) 
	\subseteq \bigcup_{x\in \wh\Ga \cap{B_{R+2}(z)}} \, B_{2}(x) 
	\subseteq {B_{R+4}(z)}  
	\subseteq {B_{R+6}(x_z)}, 
	$$
	so that 
	$$
	\One_{B_R(z)}  
	\leq \sum_{x\in \wh\Ga \cap {B_{R+2}(z)}} \, \One_{{B_{2}(x)}} 
	\leq \om \, \One_{B_{R+{6}}(x_z)}.  
	$$
	Consequently
	$$
	\begin{aligned}
	\int_{B_R(z)} f \wrt \mu
		& \leq \sum_{x\in \wh\Ga \cap {B_{R+2}(z)}} \, \int_X f \, \One_{{B_{2}(x)}} \, \wrt \mu \\ 
		& =    \sum_{x\in \wh\Ga \cap {B_{R+2}(z)}} \, \pi f (x) \\ 
		& \leq \sum_{x\in \wh\Ga \cap {B_{R+4}(x_z)}} \, \pi f (x).
	\end{aligned}
	$$
	Since the identity is a strict $\be$-rough isometry between $(\wh\Ga,d_{\wh\Ga})$ and $(X,d)$,   
	$$
	d_{\wh\Ga}(x,x_z) \leq d(x,x_z) + \be,
	$$
	so that $\wh \Ga \cap {B_{R+4}(x_z)} \subseteq {B_{R+4+\be}^{\wh\Ga}(x_z)}$.  Thus, we have proved that 
	\begin{equation} \label{f: ineq integral}
	\int_{B_R(z)} f \wrt \mu
		\leq \int_{B_{R+\be+{4}}^{\wh\Ga}(x_z)} \pi f \, \wrt \mu_{\wh\Ga},
	\end{equation}
	where $\mu_{\wh\Ga}$ denotes the counting measure on the vertices of $\wh\Ga$.   
	Now observe that 
	\begin{equation} \label{f: ineq measures}
		\begin{aligned}
			\mu_{\wh\Ga}\big(B_{R+\be+{4}}^{\wh\Ga}(x_z)\big) 
			& \leq  \sum_{x\in B_{R+\be+{4}}^{\wh\Ga}(x_z)} \frac{\mu\big({B_{1}(x)\big) }}{ca} \\
			& \leq \om \,(c\, a)^{-1} \, \mu\big(B_{R+{2}\be+{5}}(x_z)\big) \\  
			& \leq \om \, C_\be \, (c\, a)^{-1} \, \mu\big(B_{R}(z)\big),
		\end{aligned}
	\end{equation} where in the last line we have used the following easy observation: 
	\begin{align*}
     	B_{R+2\be+5}(x_z) \subset B_{R+2\be+7}(z) \subset \bigcup_{x \in \wh \Ga \cap B_{R}(z)} B_{2\beta +11}(x),
	\end{align*} 
	that implies 
	\begin{align*}
     	\mu(B_{R+\be+5}(x_z)) 
		 & \le \sum_{x \in \wh \Ga \cap B_{R}(z)} \mu\big(B_{2\beta +11}(x)\big) \\
		 & \le C\, b^{2\be}\, \mu_{\wh\Ga} \big(\{x \in \wh \Ga \cap B_{R}(z)\}\big) \\ 
		 & \le C_\beta \, \mu\big(B_R(x)\big), 
	\end{align*} 
	as desired.  By combining \eqref{f: ineq integral} and \eqref{f: ineq measures}, we find that to each $z$ in $X$, we can associate a point~$x_z$ in 
	$\wh\Ga$ such that $d(z,x_z) \leq 2$ and 
	$$
	\cM_\infty f (z)
	\leq C_{\be}\,\, \cM_\infty (\pi f) (x_z).
	$$
	For each $\al>0$ set
	\begin{align*}
 	\wh\Ga(\al)
		:= \big\{x\in \wh\Ga: \hbox{ there exists $z \in E_{\cM_\infty f}(\al)$ such that $x_z=x$}.\}
	\end{align*}
	Observe that 
	$$
	E_{\cM_\infty f}(\al)
	= \bigcup_{x\in \wh\Ga(\al)} \, \big(E_{\cM_\infty f}(\al) \cap \Om_x\big)
	\subseteq \bigcup_{x\in \wh\Ga(\al)} \, \big(E_{\cM_\infty f}(\al) \cap {B_{{2}}(x)} \big),
	$$
	so that 
	\begin{equation} \label{f: transfer weak I}
	\mu\big(E_{\cM_\infty f}(\al)\big)
	\leq \sum_{x\in \wh\Ga(\al)} \, \mu\big({B_{{2}}(x)} \big)
	\leq C\, b^2 \cdot \sharp\big(\wh\Ga(\al)\big).  
	\end{equation}
	Now, if $x$ belongs to $\wh\Ga(\al)$, then $C_{\be}\,\, \cM_\infty (\pi f) (x) > \al$, i.e. $x$ belongs to $E_{\cM_\infty (\pi f)} (\al/C_\be)$,
	whence 
	$$
	\sharp\big(\wh\Ga(\al)\big) 
	\leq \mu_{\wh\Ga}\big(E_{\cM_\infty (\pi f)} (\al/C_\be)\big).
	$$
	Therefore the following inequality between the distribution functions of $\cM_\infty f$ and $\cM_\infty (\pi f)$ holds 
	$$
	\mu\big(E_{\cM_\infty f}(\al)\big)
	\leq C\, b^2 \cdot \mu_{\wh\Ga}\big(E_{\cM_\infty (\pi f)} (\al/C_\be)\big)
	\quant \al >0. 
	$$
	Now, observe that if $f$ is in $L^\tau(X)$, then $\pi f$ is in $L^\tau(\wh\Ga)$.  Indeed, 
	\begin{equation} \label{f: transfer weak II}
	\begin{aligned}
		\bignormto{\pi f}{L^\tau(\wh\Ga)}{\tau}
		& =    \sum_{x\in \wh\Ga} \, \Big(\int_{{B_{{2}}(x)}} f \wrt \mu\Big)^\tau \\
		& \leq \sum_{x \in \wh\Ga} \mu\big({B_{2}(x)}\big)^{\tau/\tau'} \,\int_{X}  f^\tau \, \One_{{B_2(x)}}  \wrt \mu \\
		& \leq (Cb^2)^{\tau/\tau'}  \, \om \bignormto{f}{L^\tau(X)}{\tau};
	\end{aligned}
	\end{equation}
	the first inequality follows from H\"older's inequality (with exponents $\tau$ and~$\tau'$), and the second from condition \eqref{f: pinched exp}
	and the fact that $\om$ is the overlapping number of the cover $\cC$.    
	
	Now, $\cM_\infty$ is of weak type $(\tau,\tau)$ on $\wh\Ga$ by assumption.   This, together with the estimates \eqref{f: transfer weak I} and 
	\eqref{f: transfer weak II}, implies that 
	$$
	\mu\big(E_{\cM_\infty f}(\al)\big)
	\leq C_\beta\, (Cb^2)^{1+\tau/\tau'}  \, \frac{\om}{\al^\tau} \, \bignormto{f}{L^\tau(X)}{\tau},
	$$ 
	i.e. $\cM_\infty$ is of weak type $(\tau,\tau)$ on $X$, as required.  

	This concludes the proof of the theorem.  
\end{proof}

\begin{remark}
    	Observe that when $a=b$ in the assumptions of the above theorem, $\cM$ is of weak type (1,1). 
	This case includes all symmetric spaces of noncompact type of rank 1.
\end{remark}

\begin{corollary} \label{c: CH manifolds}
	Suppose that $A$ and $B$ are positive numbers such that $A\leq B <2A$, and $M$ is a Cartan--Hadamard Riemannian manifold with pinched curvature 
	sectional curvature, i.e. $-B^2\leq K\leq -A^2$.  Then $\cM$ is of weak type $(B/A, B/A)$ and it is bounded on $\lp{M}$ for all $p>B/A$.
\end{corollary}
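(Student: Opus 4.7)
The plan is to verify that $M$ belongs to the class $\cX_{a,b}^\de$ for suitable parameters $a$, $b$, $\de$ satisfying $1<a\le b<a^2$ and $\tau=\log_ab=B/A$, and then to invoke Theorem~\ref{t: main} directly. Set
$$
a := \e^{(n-1)A}, \qquad b:= \e^{(n-1)B},
$$
where $n=\dim M$. Then $1<a\le b$, the hypothesis $B<2A$ translates to $b<a^2$, and $\log_ab=B/A$, so matching the desired indices amounts to identifying the correct volume parameters.

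First I would check the metric-measure structure. Since $M$ is Cartan--Hadamard, it is complete, connected and noncompact, and the Riemannian distance is a strictly intrinsic length metric. The Riemannian measure~$\mu$ is Borel and locally doubling: indeed, on balls of radius at most any fixed $R$, Bishop--Gromov comparison with the model space of constant curvature $-B^2$ bounds $\mu(2B)/\mu(B)$ by a constant depending only on $n$, $B$ and $R$. Next, the $(a,b)$-pinched exponential growth condition \eqref{f: pinched exp} follows from the comparison bounds \cite[Corollary~3.2~(ii)]{Sa} quoted in the introduction:
$$
c\,\e^{(n-1)Ar} \le \mu\bigl(B_r(x)\bigr) \le C\,\e^{(n-1)Br} \quant x\in M \quant r\ge 1,
$$
which is precisely \eqref{f: pinched exp} for our choice of $a$ and $b$.

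The remaining ingredient is Gromov hyperbolicity. Since $K\le -A^2<0$, $M$ is a CAT$(-A^2)$ space, so every geodesic triangle is thinner than the corresponding comparison triangle in the hyperbolic plane of curvature $-A^2$. Comparison triangles in such a model space are uniformly slim (with a slimness constant depending only on $A$), hence so are geodesic triangles in $M$. Therefore $(M,d)$ is a $\de$-hyperbolic length space for some $\de=\de(A)\ge 0$, and $M\in\cX_{a,b}^\de$.

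Having verified the hypotheses, Theorem~\ref{t: main} applies and yields that $\cM$ is bounded on $\lp{M}$ for all $p>\tau=B/A$ and is of weak type $(B/A,B/A)$, as desired. The only nontrivial step is the Gromov hyperbolicity, which is standard for CAT$(\kappa)$ spaces with $\kappa<0$; everything else is a direct matching of parameters.
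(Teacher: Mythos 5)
Your proof is correct and follows essentially the same route as the paper: identify $a=\e^{(n-1)A}$, $b=\e^{(n-1)B}$, invoke the volume comparison bounds from \cite[Corollary~3.2~(ii)]{Sa} to obtain~\eqref{f: pinched exp}, note that $B<2A$ becomes $b<a^2$ and $\log_a b=B/A$, and apply Theorem~\ref{t: main}. The only difference is that you spell out the (standard) verification of local doubling via Bishop--Gromov and of Gromov hyperbolicity via the CAT$(-A^2)$ property, whereas the paper simply asserts these as well known.
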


\begin{proof}
	It is well known that $M$ is a $\de$-hyperbolic space.  
	By comparison results \cite[Corollary~3.2~(ii)]{Sa}, 
	$$
	\e^{(n-1)Ar}
	\leq \bigmod{B_r(x)} 
	\leq \e^{(n-1)Br}
	\quant x \in M \quant r \geq 1.  
	$$
	Thus, if we set $a := \e^{(n-1)A}$ and $a := \e^{(n-1)B}$, then the condition $B<2A$ transforms to $b<a^2$, so that
	$M$ belongs to the class $\cX_{a,b}^\de$.  Therefore, by Theorem~\ref{t: main}, $\cM$ is of weak type $(\tau,\tau)$ and 
	it is bounded on $\lp{M}$ for all $p$ in $(\tau,\infty]$.   Observe that 
	$$
	\log_ab
	= \frac{\log b}{\log a} 
	= \frac{B}{A}. 
	$$
	Hence $\cM$ has the required mapping properties.  
\end{proof}

\end{document}